\theoremstyle{plain}
\newtheorem{theorem}{Theorem}[section]
\newtheorem{proposition}[theorem]{Proposition}
\newtheorem{lemma}[theorem]{Lemma}
\newtheorem{corollary}[theorem]{Corollary}
\newtheorem{example}[theorem]{Example}
 \newtheorem{remark}[theorem]{Remark}
\newcommand{\La}{\mathcal{L}} 
\newcommand{\A}{\mathcal{A}}
\renewcommand{\S}{\mathcal{S}}
\newcommand{\B}{\mathcal{B}}
\newcommand{\N}{\mathbb{N}}
\newcommand{\Z}{\mathbb{Z}}
\newcommand{\M}{\mathcal{M}}
\DeclareMathOperator{\E}{\mathcal{E}}
\DeclareMathOperator{\T}{\mathcal{T}}
\DeclareMathOperator{\Card}{Card}
\DeclareMathOperator{\supp}
\begin{document}

%\title[Dynamical properties of unimodular $S$-adic subshifts]{Dynamical properties of unimodular $S$-adic subshifts}
\title[On the dimension group  of unimodular $\S$-adic subshifts]{On the dimension group\\  of unimodular $\S$-adic subshifts}

\author{V. Berth\'e}

\address{Universit\'e de Paris, IRIF, CNRS, F-75013 Paris, France}
\thanks{This work was supported by the Agence Nationale de la Recherche  through the project  ``Codys''  (ANR-18-CE40-0007).}
\email{berthe@irif.fr}

\author{P. Cecchi Bernales}
\address{Centro de Modelamiento Matemático, Universidad de Chile, Chile}
\thanks{The  second author was supported by the PhD grant CONICYT - PFCHA / Doctorado Nacional / 2015-21150544.}
\email{pcecchi@ing.uchile.cl}

\author{F. Durand}
\address{LAMFA, UMR 7352 CNRS, Universit\'e de Picardie Jules Verne,
33, rue Saint-Leu, 80039 Amiens, France}
\email{fabien.durand@u-picardie.fr}

\author{J. Leroy}
\address{D\'epartement de math\'ematique, Universit\'e de Li\`ege,
 12, All\'ee de la d\'ecouverte (B37), 4000 Li\`ege, Belgique}
\email{j.leroy@uliege.be}

\author{D. Perrin}
\address{Laboratoire d'Informatique Gaspard-Monge, Universit\'e Paris-Est, France}
\email{Dominique.Perrin@u-pem.fr}

\author{S. Petite}
\address{LAMFA, UMR 7352 CNRS, Universit\'e de Picardie Jules Verne,
33, rue Saint-Leu, 80039 Amiens, France}
\email{samuel.petite@u-picardie.fr}

\date{\today}

\begin{abstract}
Dimension groups are complete invariants of strong orbit equivalence 
for minimal Cantor systems. 
This paper studies  a  natural  family of minimal Cantor systems  having a finitely 
generated dimension group, namely   the primitive  unimodular proper  $\mathcal{S}$-adic subshifts.  They are  generated  by iterating sequences
of substitutions.  Proper substitutions  are such that the    images  of  letters  start   with a same letter, and similarly end with a same letter.
This family  includes various classes of  subshifts  such as  Brun subshifts  or dendric subshifts, that in turn  include   Arnoux-Rauzy subshifts  and natural coding of interval exchange transformations.
We compute their dimension group and  investigate the relation  between 
 the  triviality   of the infinitesimal subgroup and  rational independence  of letter measures.  We also introduce the notion of balanced functions and provide a
  topological characterization of balancedness for primitive unimodular proper S-adic subshifts.
\end{abstract}

\maketitle

%\noindent {\bf Keywords:} dendric subshifts; Arnoux-Rauzy subshifts; interval exchanges; dimension groups; orbit equivalence; strong orbit equivalence;
%topological spectrum;  infinitesimals.

%\noindent {\bf 2010 Mathematics Subject Classification: } 37A20,  37B10, 58F11, 68R15.

\section{Introduction}
Two  dynamical systems are topologically orbit equivalent if there is a homeomorphism between 
them preserving the orbits. Originally, the notion of orbit equivalence was studied  
in the measurable context  (see for instance  \cite{Dye,OrnsteinWeiss}), motivated by  the classification of von Neumann algebras. In contrast
with the measurable case, Giordano, Putnam and Skau showed that, in the topological setting, uncountably 
many classes appear by providing a dimension group    as a complete invariant of  strong 
orbit equivalence   \cite{GPS:95}. 
Dimension groups are ordered direct limit groups defined by sequences of positive homomorphisms $(\theta _n : \Z^{d_n} \to \Z^{d_{n+1}})_n$, 
where $ {\mathbb Z}^d$ is given the standard or simplicial order, and were defined by Elliott \cite{Elliott:76} to study approximately finite dimensional $C^*$-algebras.
In fact, an ordered group is a dimension group if and only if it is a Riesz group \cite{EffrosHandelmanShen:1980}.
They have been widely studied in the late 70's and at the beginning of the 80's \cite{Effros},
in particular when the dimension group is a direct limit given by unimodular matrices \cite{Effros&Shen:1979,Effros&Shen:1980,Effros&Shen:1981,Riedel:1981,Riedel:1981b}.

The present  paper  studies   dynamical and ergodic properties of  subshifts having  dimension groups 
with  a  group  part   of the form ${\mathbb Z}^d$.
We focus on  the class of  primitive  unimodular  proper $\mathcal{S}$-adic subshifts. 
Such subshifts are generated  by iterating   sequences of  substitutions.  They have recently attracted  much attention in symbolic dynamics
\cite{Berthe&Delecroix:14} and in tiling theory \cite{GM:13,Fusion:14}. Proper substitutions are such that images of letters  start with a  same letter and   also
end with a  same letter.
Proper  minimal proper $\S$-adic systems  have played an important role for the characterization of linearly recurrent subshifts \cite{Durand:2000, Durand:03}.
The  term unimodular  refers to  the unimodularity of the  incidence matrices of the associated substitutions.

Sturmian subshifts,  subshifts generated by  natural codings of interval exchange transformations or  Arnoux-Rauzy subshifts are  prominent examples
of  unimodular  proper $\mathcal{S}$-adic subshifts.
They also  belong to a recently defined family of subshifts,  called dendric  subshifts, and   considered in \cite{BDFDLPRR:15,BDFDLPRR:15bis,BFFLPR:2015,BDFDLPR:16,Rigidity} (see also Section~\ref{subsec:tree}).
 In this series of papers, their elements  have been studied  under the name of tree words. We have chosen to use the terminology dendric subshift in order to avoid any ambiguity with respect to shifts defined on trees  (see, e.g.,~\cite{AubrunBeal}) and also to avoid the puzzling term ``tree word''.

Minimal dendric subshifts are  defined with respect to combinatorial properties of their language expressed in terms of extension graphs. For precise definitions, see Section~\ref{subsec:tree}.  In particular, 
they  have linear factor complexity. Focusing on extension properties of factors  is a combinatorial viewpoint that allows to highlight the common features  shared by   dendric subshifts, even if the corresponding symbolic  systems have very distinct  dynamical, ergodic and spectral properties.
For instance,  a  coding of a generic  interval exchange is  topologically weakly mixing  for irreducible permutations not of rotation class \cite{NogRud},   whereas an Arnoux-Rauzy subshift is  generically  not topologically   weakly mixing   \cite{Cassaigne-Ferenczi-Messaoudi:08,BST:2019}.
Even though one can disprove,  in some cases, whether two  given minimal dendric  subshifts are topologically conjugate by using e.g. asymptotic pairs  (see for instance Section \ref{ex:balance}), the question of orbit equivalence is more subtle and is one of the  motivations for the present work.

The aim of this paper is   to study topological  orbit equivalence  and strong orbit equivalence  for  minimal unimodular  proper   $\S$-adic  
subshifts. Let $(X,S)$ be  such a subshift  over a  $d$-letter alphabet ${\mathcal A}$  and 
let ${\mathcal M} (X,S)$ stand for its set of shift-invariant  probability measures.
One of our   main results states  that   any continuous integer-valued function defined on $X$  is cohomologous to some integer linear combination of characteristic functions of letter cylinders 
 (Theorem \ref{theo:cohoword}). 
This relies on the fact that such subshifts being  aperiodic (see Proposition \ref{prop:minimalSSIprimitif}) and recognizable by~\cite{BSTY}, they     have      a sequence  of  Kakutani-Rohlin  tower partitions
with suitable topological  properties.  
We  then  deduce  an explicit computation of their dimension group (Theorem \ref{theo:dg}).
Indeed,  the dimension group $K^{0}(X,S)$ with ordered unit is isomorphic to
$\left( {\mathbb Z} ^d, \,  \{ {\bf x} \in {\mathbb Z}^d \mid \langle {\bf x},  \boldsymbol{ \mu} \rangle > 0 \mbox{ for all }  
 \mu \in  {\mathcal M} (X,S) \}\cup \{{\mathbf 0}\},\,  \bf{1}\right )$,
 where $\boldsymbol{ \mu}$ denotes the vector   of measures of letter cylinders.

In other words, 
strong  orbit equivalence  can be characterized by means of letter measures, i.e.,  by measures of letter  cylinders.
In particular, 
two shift-invariant  probability measures on $(X,S)$  coinciding  on  the  letter cylinders are proved to be  equal (see Corollary \ref{coro:measures}).  
This result extends  a statement initially proved for interval exchanges in~\cite{FerZam:08}; see also \cite{Putnam:89,Putnam:92,GjerdeJo:02} and \cite{BHL:2019,BHL:2020}. 
Moreover, two  primitive unimodular proper ${\mathcal S}$-adic  subshifts are proved to be  strong orbit equivalent if and only if their  simplexes of letter measures coincide up to a unimodular matrix (see Corollary \ref{cor:oe}), with the   simplex of letter measures  being the   $d$-simplex   generated by   the vectors $(\nu[a])_ {a \in \A}$, for  $\nu$  in ${\mathcal M}(X,S)$.
%In particular, if they are  uniquely ergodic, then they  are strong orbit equivalent if and only if they  share a  common vector of letter measures, up to  a unimodular matrix. 

 We also investigate in Section \ref{sec:saturation}  the  triviality of   the   infinitesimal subgroup and  relate it  to the notion of   balance.
 We provide
 a characterization of the   triviality  of the infinitesimal subgroup  for  minimal unimodular  proper   $\S$-adic  
subshifts in  terms of  rational independence of 
measures of letters (see Proposition  \ref{theo:saturated}).  
Inspired by the classical notion of  balance in word combinatorics (see e.g.  references in \cite{BerCecchi:2018}), 
 we  also introduce  the notion of  balanced  functions and provide  a topological  characterization of  this  balance property for   primitive  unimodular proper $\S$-adic subshifts (see Corollary \ref{cor:balanced}).

\bigskip

We briefly describe the contents of this paper.
Definitions and basic notions are recalled in Section~\ref{sec:def},  including,
in particular,  the notions of   dimension group and   orbit equivalence  in Section~\ref{subsec:dim},  and of  image subgroup   in Section~\ref{subsec:cyl}. Primitive  unimodular $\S$-adic subshifts are introduced in Section~\ref{sec:Sadic}, with     dendric subshifts  being  discussed in more details  in Section~\ref{subsec:tree}.
Their dimension groups  are studied  in Section~\ref{sec:proofs2}.   Section~\ref{sec:saturation} is devoted to the study of infinitesimals
 and  their
 connections with  the notion  of  balance. Some examples are handled  in  Section  \ref{sec:examples}.

\bigskip

\paragraph{\bf Acknowledgements}
We would like to thank M. I. Cortez and F. Dolce for stimulating discussions. We also thank warmly  the  referees of this paper for their  careful reading and their 
very useful  comments,  concerning  in particular    the formulation of Corollary \ref{coro:freq}.

%%%%%%%%%%%%%%%%%%%%%%%%%%
\section{First definitions and background }\label{sec:def}
%%%%%%%%%%%%%%%%%%%%%%%%%%

%\input{section2.tex}

\subsection{Topological dynamical systems} 
By a {\it topological dynamical system}, we mean a pair $(X,T)$
where $X$ is a compact metric space and $T: X \to X$ is a homeomorphism.
It is a {\it Cantor system} when $ X $ is a Cantor space,
that is, $ X $ has a countable basis of its topology which consists of closed and open sets (clopen sets) and does not have isolated points.
This system  is {\em aperiodic} if it does not have periodic points, i.e., points $x$ such that $T^n(x) = x$ for some $n >0$.
It is {\em minimal} if it does not contain any non-empty proper closed  $T$-invariant subset. 
Any minimal Cantor system is aperiodic.
Two topological dynamical systems $(X_1,T_1)$, $(X_2,T_2)$ are {\em conjugate} when there is a {\em conjugacy} between them, i.e., a homeomorphism $\varphi:X_1 \to X_2$ such that $\varphi \circ T_1 = T_2 \circ \varphi$.

A complex number $\lambda$ is  a {\em continuous eigenvalue}  of 
$(X,T)$ if there exists  a  non-zero continuous function $f \colon X  \rightarrow {\mathbb C}$ such that
$f\circ T= \lambda f$. 
An  {\em additive eigenvalue} is a real number $\alpha$ such that
$\exp(2i \pi \alpha)$ is a   continuous eigenvalue.
Let $E(X,T)$ be the  (additive)  group of additive eigenvalues of $(X,T)$.
We consider its rank over ${\mathbb Q}$, i.e.,  the maximal number of rationally independent  elements of 
$E(X,T)$.
Note that $1$ is always an additive eigenvalue and thus ${\mathbb Z} $ is included in $  E(X,T)$.

A probability measure $\mu$ on $X$ is said to be {\em $T$-invariant} if $\mu(T^{-1} A) = \mu(A)$ for every measurable subset $A$ of $ X$.
Let ${\mathcal M} (X,T)$ be the set of all $T$-invariant probability measures on $(X,T)$. 
It is a convex set and any extremal point is called an {\em ergodic} $T$-invariant measure.  It is well known that any topological dynamical  system admits an ergodic invariant measure. 
The set of ergodic $T$-invariant  probability measures is denoted ${\mathcal M}_e (X,T)$.  
Observe that if $(X,T)$ is a minimal Cantor system, then for all clopen $E$ and all $T$-invariant probability measures $\mu$, one has $\mu(E)>0$. 
The topological dynamical system $(X,T)$ is {\em uniquely ergodic} if there exists a unique $T$-invariant probability measure on $X$.
It is said to be   {\em strictly  ergodic} if it is  minimal  and  uniquely ergodic.

The notation  $\chi_{E}$  stands for  the characteristic function of  $E$;
$\N$ stands for the set of non-negative integers ($0  \in \N$).

\subsection{Subshifts} \label{subsec:SD}

Let $\A$ be a finite alphabet of cardinality $d \geq 2$.
Let us denote by $\varepsilon$ the empty word of the free monoid $\A^*$ (endowed with concatenation), and by $\A^{\Z}$ the set of bi-infinite words over $\A$.
For a bi-infinite word $x \in \A^\Z$, and for $i,j \in \Z$ with $i \leq j$, the notation $x_{[i,j)}$ (resp., $x_{[i,j]}$) stands for $x_i \cdots x_{j-1}$ (resp., $x_i \cdots x_{j}$) with the convention $x_{[i,i)} = \varepsilon$.
For a word  $w= w_{1} \cdots w_{\ell} \in \A^\ell$,
its  {\em length} is denoted $|w|$ and equals $\ell$.
We say that a word $u$ is a {\em factor} of a word $w$ if there exist words $p,s$ such that $w = pus$.
If $p = \varepsilon$ (resp., $s = \varepsilon$) we say that $u$ is a {\em prefix} (resp., {\em suffix}) of $w$.  
For a word  $u \in  \A^{*}$,  an index $ 1 \le j \le \ell$ such that $w_{j}\cdots w_{j+|u|-1} =u$ is called an {\em occurence} of $u$ in $w$ and we use the same term for bi-infinite word in $\A^{\Z}$. 
%The number of occurrences of the letter $i \in \A$ in the  finite word $w$ is denoted as   $|w|_i$.
  The number of occurrences of  a  word $u \in \A^*$ in a  finite word $w$ is   denoted as   $|w|_u$.

The set  $\A^{\Z}$ endowed with the product topology of the discrete topology on each copy of $\A$ is topologically a Cantor set. 
The {\em shift map}  $S$ defined by $S \left( (x_n)_{n \in \mathbb{Z}} \right) = (x_{n+1})_{n \in \mathbb{Z}}$ is a homeomorphism of  $\A^{\Z}$. 
A {\em subshift} is a pair $(X,S)$ where $X$ is a closed shift-invariant subset of some $\A^{\Z}$.
It is thus a {\em topological dynamical system}.
Observe that a minimal subshift is aperiodic whenever it is infinite.

The set of factors of a sequence $x \in \A^{\Z}$ is denoted $\La(x)$. 
For a subshift  $(X,S)$ its {\em  language} $\La(X)$ is $\cup_{x\in X} \La(x)$. The {\em factor complexity} $p_X$ of the subshift $(X,S)$ is the function that with $n \in \N$ associates the number $p_X(n)$ of factors of length $n$ in $\La(X)$.
 
%, and  $\La_X(k)$  stands for the set of factors of length $k$. 

Let $w^-, w^+$ be two words. The cylinder $[w^-.w^+]$ is defined  as the set
$\{ x \in X \mid x_{[-|w^-| , |w^+|)} = w^- w^+ \}$. 
It is a clopen set. 
When $w^-$ is the empty word $\varepsilon$, we set $[\varepsilon .w^+] = [w^+]$.

For $\mu$ a $S$-invariant  probability measure, the measure of a factor $w \in \La(X)$ is defined as  the measure of  the cylinder $[w]$.  
The notation  $\boldsymbol{ \mu}$  stands for  the vector $(\mu([a])_{ a \in {\mathcal A} } \in {\mathbb R}^{\mathcal A}$.
The {\em simplex of letter measures} is defined as the $d$-simplex consisting in all the vectors $\boldsymbol{ \mu}$ with $\mu \in {\mathcal M} (X,S)$, i.e., it   consists of all the convex combinations of the vectors $\boldsymbol{ \mu}$ with $\mu \in {\mathcal M}_e (X,S)$.
%A uniquely ergodic subshift $(X,S,\mu)$  on a $d$-letter alphabet $\A$ is said to have {\em rationally independent letter measures} if the rational vector space spanned by the values $\mu([a])$, $a \in \A $, has rank $d-1$ (recall that $ \sum_{a  \in \A} \mu[a]=1$).  

\subsection{Dimension groups and orbit equivalence}\label{subsec:dim}

Two  minimal Cantor systems $(X_1,T_1)$ and $(X_2,T_2)$ are  {\em orbit equivalent } if there exists a  homeomorphism $\Phi \colon X_1 \rightarrow X_2$
mapping orbits  onto  orbits, i.e.,  for all $x \in X_1$,  one has
$$\Phi ( \{ T_1^n x  \mid n \in {\mathbb Z} \})= \{ T_2^n \Phi (x) \mid n \in {\mathbb Z}\}.$$
This implies that there  exist two maps $n_1 \colon X_1  \rightarrow  {\mathbb Z}$ and $n_2 \colon  X_2 \rightarrow {\mathbb Z}$ (uniquely defined by aperiodicity)  such  that,  for all $x \in X_1$,
$$\Phi \circ T_1 (x)= T_2^{n_1(x)} \circ \Phi(x) \quad  \mbox { and }  \quad  \Phi \circ T_1^{n_2(x)} (x)= T_2 \circ \Phi (x).$$
The  minimal  Cantor systems  $(X_1,T_1)$ and $(X_2,T_2)$  are {\em strongly orbit equivalent}  if   $n_1$ and $n_2$   both  have at most one point  of discontinuity.
For more on the subject, see e.g. \cite{GPS:95}.

There is a powerful and  convenient way to  characterize orbit and strong orbit equivalence in terms of ordered groups and dimension groups due to  \cite{GPS:95}.
An  {\em ordered group} is a pair $(G,G^+)$, where $G$ is  a countable  abelian group   and  $G^+$ is a subset  of $G$, called the {\em positive cone}, satisfying
$$
 G^+ + G^+ \subset G^+, \quad  G^+ \cap (-G^+) = \{ 0 \}, \quad  G^+ - G^+ = G.
$$
We write $a\leq b$ if $b-a \in G^+$, and $ a<b$ if $b-a \in  G^+$ and $b \neq a$. 
An \emph{order ideal}\index{order! ideal} $J$ of an ordered group $(G,G^+)$ 
is a subgroup $J$ of $G$ such that $J=J^+-J^+$ (with $J^+=J\cap G^+$)
and such that $0\le a\le b\in J$ implies $a\in J$.
An ordered group is \emph{simple}\index{simple ordered group}
\index{ordered group!simple} if it has no nonzero
proper order ideals.

An element $u $ in $G^+$  such that,  for all $a $ in $G$, there exists  some non-negative integer $n$ with $a  \leq nu$ is called an {\em order unit}  for $(G,G^+)$. 
Two ordered groups with order unit  $(G_1,G_1^+,u_1)$  and  $(G_2,G_2^+,u_2)$ are {\em isomorphic}   when there exists  a group  isomorphism  
$\phi \colon G_1 \rightarrow G_2$  such that $ \phi(G_1^+)=G_2^+$ and $\phi (u_1)=u_2$.

We say that an ordered group is {\em unperforated} if for all $a\in G$, if $na\in G^+$ for some $n\in\mathbb{N}\setminus\{0\}$, then $a\in G^+$.
Observe that  this implies in particular that $G$ has no torsion element. 
A {\em dimension group} is  an unperforated ordered group with order unit $(G,G^+,u)$ satisfying the {\em Riesz interpolation property}: given $a_1, a_2, b_1, b_2\in G$ with $a_i\leq b_j$ ($i,j=1,2$), there exists $c\in G$ with $a_i\leq c\leq b_j$.

Most examples of dimension groups we will deal with in this paper are of the following type:
$(G, G^+,u ) = ( {\mathbb Z} ^d, \,  \{ {\bf x}\in {\mathbb Z} ^d  \mid \theta_i ( {\bf x}) > 0, \,  1 \leq i \leq e  \} \cup\{\mathbf{0}\},u ) $,
where the $\theta_i$'s are  independent linear forms such that $\theta_i(u)=1$.

Let $(X,T)$ be a  Cantor  system. 
Let $C(X, {\mathbb R})$ and $C(X, {\mathbb Z})$ respectively stand for the group of continuous functions from $X$ to ${\mathbb R}$ and ${\mathbb Z}$, and let $C(X, {\mathbb N})$ stand for the monoid of continuous functions from $X$ to $\mathbb{N}$, with the group and monoid  operation being the addition.
Let
$$
 \begin{array}{lrcl}
  \beta \colon & C(X, {\mathbb Z}) & \rightarrow & C(X, {\mathbb Z}) \\
  & f & \mapsto & f \circ T - f.
 \end{array}
$$
A map $f$ is called  a  {\em  coboundary} (resp., a {\em real coboundary}) if there exists  a map $g$  in $C(X, {\mathbb Z})$ (resp. in $C(X, {\mathbb R})$)   such that $f =  g \circ T -g$.
Two maps $f,g \in C(X,\mathbb{Z})$ are said to be {\em cohomologous} whenever $f-g$ is a coboundary. 
%For integer valued continuous map the notions of coboundary and real coboundary coincide as shown by the following well-known result (see for instance  \cite[Proposition 4.1]{Ormes:00}).
%\begin{lemma}\label{lem:realCoboundary}
%An integer valued function $f \in C(X, {\mathbb Z})$ is a coboundary if and only if it is a real coboundary.
%\end{lemma}  

We consider    the quotient group $H(X,T)=C(X, {\mathbb Z})/ \beta C(X,{\mathbb Z})$.
We  denote $[f]$ the class of a function $f$ in $H$, and $\pi$ the natural  projection $\pi \colon C(X, {\mathbb Z}) \rightarrow H(X,T)$.
We define $H^+ (X, T) = \pi (C(X, {\mathbb N}))$ as  the set of classes of functions in $C(X, {\mathbb N})$.
The ordered   group with order unit
$$
 K^0(X,T) := (H(X,T), H^+ (X,T), [1]),
$$
where  $1$ stands for the one constant valued function, 
 is a dimension group  according  to \cite{Putnam:89}, called  the  {\em dynamical dimension group } of $(X,T)$. We will  use   in this paper
    the abbreviated terminology     {\em dimension group  of $(X,T)$}.
%We say two dimension groups are {\em isomorphic} whenever they are isomorphic as ordered group with order unit.
The next result shows that any simple dimension group can be realized as the dimension group of  a  minimal Cantor system.
\begin{theorem}
\cite[Theorem 5.4 and Corollary 6.3]{HermanPS:92}
\label{theo:simpleminimal}
Let $(G, G^+ , u)$ be a dimension group with order unit.
It is simple if and only if there exists a minimal Cantor system $(X,T)$ such that $(G, G^+ , u)$ is isomorphic to $K^0(X,T)$.
\end{theorem}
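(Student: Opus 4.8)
The plan is to prove the two implications separately, treating the ``realizable $\Rightarrow$ simple'' direction by a direct computation inside $K^0(X,T)$ and the ``simple $\Rightarrow$ realizable'' direction through the Bratteli--Vershik machinery. For the first direction I would work with minimality of $(X,T)$ to show that every nonzero order ideal swallows the order unit $[1]$; for the converse I would encode $(G,G^+,u)$ as an ordered Bratteli diagram via the Effros--Handelman--Shen realization of a dimension group as a direct limit of simplicial groups, translate the algebraic simplicity of $G$ into the combinatorial simplicity of the diagram, and then read off the minimal Cantor system as the associated Vershik system.

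For the implication stating that $K^0(X,T)$ is simple whenever $(X,T)$ is a minimal Cantor system, let $J$ be a nonzero order ideal. Since $J=J^+-J^+$, there is a nonzero $a\in J^+$, and writing $a=[f]$ with $f\in C(X,\mathbb{N})$ (so that $f\not\equiv 0$) the set $E=\{x\in X\mid f(x)\ge 1\}$ is a nonempty clopen set with $\chi_E\le f$ pointwise. Then $0\le[\chi_E]\le a\in J$, so the order-ideal property gives $[\chi_E]\in J$. Minimality forces $\bigcup_{k\ge 0}T^{-k}E=X$, and compactness yields a finite subcover, so that $1\le\sum_{k}\chi_{T^{-k}E}$ pointwise for finitely many indices $k$. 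Because $\beta\chi_E=\chi_E\circ T-\chi_E=\chi_{T^{-1}E}-\chi_E$, all the classes $[\chi_{T^{-k}E}]$ coincide with $[\chi_E]$, whence $[1]\le n[\chi_E]$ for some $n$. As $J$ is a subgroup containing $[\chi_E]$ and $0\le[1]\le n[\chi_E]\in J$, we get $[1]\in J$; finally, for arbitrary $b\in G$ one writes $b=b^+-b^-$ with $b^\pm\in G^+$ and dominates $b^\pm\le m[1]\in J$, so $b^\pm\in J$ and $b\in J$. Hence $J=G$ and $K^0(X,T)$ is simple.

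For the converse, I would use Effros--Handelman--Shen to write $(G,G^+,u)=\varinjlim(\mathbb{Z}^{d_n},\theta_n)$ with positive connecting morphisms $\theta_n$ and $u$ tracked by the image of a distinguished unit. This data is exactly an ordered Bratteli diagram $B$ whose level-$n$ incidence matrix is that of $\theta_n$. The pivotal step is the dictionary between order ideals of the limit and hereditary directed subdiagrams, which shows that \emph{simplicity of $G$} is equivalent, after telescoping, to \emph{simplicity of $B$}: for every level $n$ there is $m>n$ with all entries of the product $M_{n+1}\cdots M_m$ strictly positive. Choosing a proper order on $B$ and forming the Bratteli--Vershik system $(X_B,V_B)$ on the space of infinite edge-paths, simplicity of $B$ makes $V_B$ a minimal homeomorphism of the Cantor set $X_B$, and the standard computation of the dimension group of a Bratteli--Vershik model returns $K^0(X_B,V_B)\cong\varinjlim(\mathbb{Z}^{d_n},\theta_n)=(G,G^+,u)$.

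The hard part will be this last dictionary: verifying that the Vershik map is well defined and minimal exactly under simplicity of the diagram, and that the dimension-group functor applied to the model recovers the original direct limit, which together are the substance of Herman--Putnam--Skau's Theorem~5.4 and Corollary~6.3 and which I would invoke rather than reprove. I would also flag the degenerate case $G\cong\mathbb{Z}$: an infinite minimal Cantor system carries a non-atomic invariant measure, so the values of $\boldsymbol{\mu}$ on clopen sets are dense and $H(X,T)$ is never cyclic; thus the statement must be understood for $G\not\cong\mathbb{Z}$ (equivalently, the degenerate simplicial group is implicitly excluded by requiring an infinite Cantor system), a caveat I would make explicit before running the construction.
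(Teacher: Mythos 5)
There is no ``paper's proof'' to compare against here: the paper imports this statement verbatim from Herman--Putnam--Skau (their Theorem 5.4 and Corollary 6.3) and never proves it. So your proposal has to be judged on its own merits, and it holds up. Your proof of the direction ``$(X,T)$ minimal Cantor $\Rightarrow K^0(X,T)$ simple'' is complete and correct, and it is the standard argument: a nonzero positive class $[f]$ in an order ideal $J$ dominates $[\chi_E]$ for the clopen set $E=\{f\ge 1\}$; minimality plus compactness gives $X=\bigcup_{k\in F}T^{-k}E$ for a finite $F$; since each $\chi_{T^{-k}E}=\chi_E\circ T^k$ is cohomologous to $\chi_E$, this yields $[1]\le |F|\,[\chi_E]\in J$, and then the order-unit property pulls all of $H(X,T)$ into $J$. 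For the converse you do not give an independent proof: your Effros--Handelman--Shen/Bratteli--Vershik outline is precisely the Herman--Putnam--Skau construction, and you explicitly invoke their Theorem 5.4 and Corollary 6.3 for the substantive steps (properly ordering a simple diagram, minimality of the Vershik map, and recovering the direct limit as $K^0$ of the model). That is circular as a proof of the statement, but it exactly mirrors what the paper itself does, so it is not a defect relative to the paper.

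Your final caveat is a genuine catch rather than pedantry: as transcribed in the paper, the statement is literally false for $G\cong\mathbb{Z}$. The ordered group $(\mathbb{Z},\mathbb{Z}_{\ge 0},1)$ is a simple dimension group (its only order ideals are $\{0\}$ and $\mathbb{Z}$), yet it is not $K^0(X,T)$ for any minimal Cantor system: any proper nonempty clopen set $U$ satisfies $0<\mu(U)<1$ for an invariant measure $\mu$ by minimality, so the trace $\tau_\mu$ takes non-integer values on $H^+(X,T)$, whereas a cyclic ordered group has discrete trace image; your version via non-atomicity and clopen sets of arbitrarily small positive measure gives the same conclusion. This matches the actual statement in Herman--Putnam--Skau, whose realization result excludes $\mathbb{Z}$, and it is also exactly where your Bratteli--Vershik construction would break down (the path space degenerates to a point, which is not a Cantor space). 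The only blemish is notational: you wrote $\boldsymbol{\mu}$ for the values of $\mu$ on clopen sets, but in this paper $\boldsymbol{\mu}$ is reserved for the vector of letter measures of a subshift.
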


We also define the  set of {\em infinitesimals} of $K^0 (X,T)$ as
$$
 \mbox{Inf} (K^0 (X,T)) = \left \{ [f] \in H(X,T) : \int f d\mu = 0 \mbox { for all } \mu \in {\mathcal M} (X,T)\right\}.
$$
Note that $H(X,T)/\mbox{Inf}(K^0 (X,T))$ with the induced order also determines a dimension group \cite{GPS:95}. We denote it $K^{0} (X,T)/\mbox{\rm Inf} (K^0 (X,T))$.

The  dimension groups $K^{0} (X,T)$ and $K^{0} (X,T)/\mbox{Inf} (K^0 (X,T))$  are complete invariants   of  strong  orbit equivalence and orbit equivalence, respectively.

\begin{theorem}\cite{GPS:95}\label{oe}
Let $(X_1,T_1)$ and $(X_2,T_2)$ be two minimal Cantor systems.
The following are equivalent:
\begin{itemize}
 \item $(X_1,T_1)$ and $(X_2,T_2)$ are strong orbit equivalent;
 \item $K^{0} (X_1,T_1)$ and $K^0 (X_2,T_2)$ are isomorphic.
\end{itemize}
Similarly, the following are equivalent:
\begin{itemize}
 \item $(X_1,T_1)$  and $(X_2,T_2)$ are orbit equivalent;
 \item $K^{0} (X_1,T_1)/\mbox{\rm Inf} (K^0 (X_1,T_1))$ and $K^{0} (X_2,T_2)/\mbox{\rm Inf} (K^0 (X_2,T_2))$ are isomorphic.
\end{itemize}
\end{theorem}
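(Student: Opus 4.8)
The plan is to convert both equivalences into order-theoretic statements about the group $H(X,T)=C(X,\Z)/\beta C(X,\Z)$ underlying $K^0(X,T)$, and to transfer information between the dynamics and this group through Bratteli--Vershik representations. The basic remark is that for a minimal Cantor system $K^0(X,T)$ is a \emph{simple} dimension group with distinguished order unit $[1]$; hence, by the realization Theorem~\ref{theo:simpleminimal}, each system may be coded by an ordered Bratteli diagram whose inductive-limit dimension group is exactly $K^0(X_i,T_i)$, the nested Kakutani--Rohlin tower partitions simultaneously reading off the group, the order unit, and the map $T_i$.

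For the two forward implications, let $\Phi\colon X_1\to X_2$ implement an orbit equivalence. Pulling back functions, $\Phi^{\ast}\colon f\mapsto f\circ\Phi$, is a unital positive isomorphism of the function groups $C(X_2,\Z)\to C(X_1,\Z)$ carrying $C(X_2,\N)$ onto $C(X_1,\N)$. Writing $R=\Phi^{-1}\circ T_2\circ\Phi=T_1^{\,n_2}$, a homeomorphism preserving the $T_1$-orbits, one computes
$$\Phi^{\ast}(f\circ T_2-f)=(f\circ\Phi)\circ R-(f\circ\Phi).$$
Every $T_1$-invariant measure is invariant under all orbit-preserving homeomorphisms, hence under $R$, so this function integrates to $0$ against each $\mu\in\M(X_1,T_1)$ and is therefore an infinitesimal; this shows at once that $\Phi^{\ast}$ descends to an isomorphism $K^0(X_2,T_2)/\mathrm{Inf}\to K^0(X_1,T_1)/\mathrm{Inf}$, which is the easy half of the orbit-equivalence statement. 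The additional content for strong orbit equivalence is that, when $n_2$ has \emph{at most one} discontinuity, $(f\circ\Phi)\circ R-(f\circ\Phi)$ is in fact a genuine $T_1$-coboundary: the telescoping transfer along $T_1$-orbits is continuous except at the single exceptional orbit determined by the discontinuity, whose contribution can be absorbed using minimality. Thus $\Phi^{\ast}$ descends to all of $H$ and gives $K^0(X_2,T_2)\cong K^0(X_1,T_1)$.

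The reverse implications are the substantial part. Given a unital order isomorphism $K^0(X_2,T_2)\to K^0(X_1,T_1)$ of simple dimension groups, the Bratteli--Elliott--Krieger classification lets us, after telescoping, realize the two systems by Bratteli diagrams sharing the \emph{same} underlying unordered diagram and matching order unit. One then defines $\Phi$ by sending each point of $X_1$ to the point of $X_2$ following the same infinite path; this is a homeomorphism mapping $T_1$-orbits onto $T_2$-orbits. The two Vershik orders need not coincide, but they differ only along the maximal and minimal paths, so each of the induced cocycles $n_1,n_2$ acquires at most one discontinuity and $\Phi$ is a strong orbit equivalence. \emph{The crux of the whole theorem is here}: arranging the two Vershik orders so that a single discontinuity survives is the technical heart of \cite{GPS:95} (equivalently, the absorption of the crossed product $C(X)\rtimes_T\Z$ into an AF-subalgebra), and is where essentially all of the difficulty lies.

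For the orbit-equivalence direction one runs the same construction with the order data weakened to what survives modulo infinitesimals. Since $\mathrm{Inf}(K^0(X,T))$ is the kernel of integration against all $\mu\in\M(X,T)$, the quotient $K^0(X,T)/\mathrm{Inf}$ encodes exactly the invariant-measure (state-space) data, and an isomorphism of such quotients lets one match the two Bratteli--Vershik models only up to their traces; the resulting $\Phi$ then has cocycles with finitely many---after a reduction, at most two---discontinuities, i.e. a possibly non-strong orbit equivalence. In both cases the forward direction is routine and the reverse construction of the homeomorphism from purely algebraic data is the genuine obstacle.
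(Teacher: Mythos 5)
First, a point of comparison: the paper does not prove Theorem~\ref{oe} at all --- it is quoted from \cite{GPS:95} as an external result --- so there is no internal proof to measure you against; what you have written is an attempt to re-prove Giordano--Putnam--Skau. Your two forward (``easy'') implications are essentially right in outline: pulling back along an orbit equivalence $\Phi$ and noting that $R=\Phi^{-1}\circ T_2\circ\Phi$ preserves every $T_1$-invariant measure does show that $T_2$-coboundaries pull back to infinitesimals, so $\Phi^{\ast}$ descends to an isomorphism of the quotients by $\mathrm{Inf}$. Even here, though, the strong-orbit-equivalence refinement --- that a single discontinuity of $n_2$ forces $(f\circ\Phi)\circ R-(f\circ\Phi)$ to be an honest $T_1$-coboundary rather than a mere infinitesimal --- is a genuine lemma in \cite{GPS:95}; ``absorbed using minimality'' is an assertion, not an argument.

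The reverse implications contain the real gap. Your construction --- realize both systems on the same unordered Bratteli diagram and let $\Phi$ be the identity on the common path space --- does not map orbits onto orbits. Vershik orbits coincide with cofinality (tail-equivalence) classes \emph{except} on the orbits of the maximal and minimal paths, where two cofinality classes are glued into a single orbit; the two orderings will in general glue \emph{different} pairs of classes, so the identity map sends some $T_1$-orbit onto a set that is not a $T_2$-orbit. Your claim that the orders ``differ only along the maximal and minimal paths, so each cocycle acquires at most one discontinuity'' is precisely the step that fails, and repairing it (arranging the intertwining and the orderings so that the gluings match) is the technical heart of \cite{GPS:95}; your proposal explicitly defers this (``the crux of the whole theorem is here''), i.e., it assumes what is to be proved. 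The orbit-equivalence half is further off the actual route: in \cite{GPS:95} one proves that every minimal Cantor system is orbit equivalent to a system whose dimension group is $K^{0}/\mathrm{Inf}$ (collapsing the infinitesimals --- itself a substantial theorem) and then applies the strong-orbit-equivalence classification; ``matching the Bratteli--Vershik models up to traces'' so that the cocycles have ``at most two discontinuities'' is not a meaningful relaxation, since orbit equivalence imposes no continuity condition whatsoever on the cocycles. In sum, the easy halves are correctly sketched, but the hard halves are missing, and the one concrete mechanism you propose for them (identity on a common path space) is incorrect as stated.
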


\subsection{Image   subgroup}
\label{subsec:cyl}

 A {\em trace}  (also called state) of  a dimension group $(G,G^+,u)$ is a group homomorphism $p:G\to \mathbb{R}$ such that $p$ is non-negative ($p(G^+)\geq 0$) and $p(u)=1$. 
The collection of all traces of $(G,G^+,u)$ is denoted by $\T(G,G^+,u)$. 
It is known \cite{Effros} that $\T(G,G^+,u)$ completely determines the order on $G$, if  the dimension group is simple.
In fact, one has 
\[
G^+=\{a\in G: p(a)>0, \forall p\in \T(G,G^+,u)\}\cup \{0\}.
\]  
For more on the subject, see e.g. \cite{Effros}.

Let $(X,T)$ be a  minimal Cantor system. 
Given $\mu \in {\mathcal M} (X,T)$, we define the {trace}  $\tau_{\mu}$ on $K^0 (X,T)$  as  $\tau_{\mu} ([f]):= \int f d\mu$. 
It is shown in \cite{HermanPS:92} that the correspondence $\mu\mapsto \tau_\mu$ is an affine isomorphism from ${\mathcal M} (X,T)$ onto $\T(K^0(X,T))$.
Thus it sends the extremal points of ${\mathcal M} (X,T)$, i.e.,  the ergodic measures, to the extremal points of $\T(K^0(X,T))$, 
called {\em pure traces}.

The {\em image subgroup} of $K^0 (X,T)$ is defined as  the 
 ordered group with order unit
$$
 (I(X,T), I(X,T) \cap {\mathbb R} ^+, 1),
$$
where 
$$
 I(X,T) = \bigcap_{ \mu \in {\mathcal M} (X,T)} \left\{ \int f d\mu \,: \, f \in C(X,{\mathbb Z})\right\}.
$$
Actually,  $E(X,T)$ is a subgroup of $I(X,T)$  (see~\cite[Proposition 11]{CortDP:16} and also   \cite[Corollary 3.7]{GHH:18}.

If $(X,T)$ is uniquely ergodic with unique $T$-invariant probability  measure $\mu$, then $K^0 (X,T)/\mbox{Inf} (K^0 (X,T))$ is isomorphic to $(I(X,T), I(X,T) \cap {\mathbb R} ^+, 1)$, via the correspondence $$[f]+\mbox{Inf} (K^0 (X,T))\mapsto \int fd\mu.$$

Let us recall the following description of $I(X,T)$.
\begin{proposition}{ \cite[Corollary 2.6]{GHH:18}, \cite[Lemma 12]{CortDP:16}.}
\label{prop:GW}
Let $(X,T)$ be a minimal Cantor system.
Then
$$
I(X,T) = \left\{ \alpha : \exists g \in C(X,\mathbb{Z} ) , \alpha =  \int g d \mu \ \forall \mu \in \mathcal{M} (X,T) \right\}.
$$
\end{proposition}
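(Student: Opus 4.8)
The plan is to prove the two inclusions separately, the inclusion $\supseteq$ being immediate and the inclusion $\subseteq$ being the substantial one. For $\supseteq$, if $\alpha = \int g\,d\mu$ for all $\mu \in \mathcal{M}(X,T)$ with a single $g \in C(X,\mathbb{Z})$, then in particular $\alpha \in \{\int f\,d\mu : f \in C(X,\mathbb{Z})\}$ for each fixed $\mu$ (take $f = g$), so $\alpha \in I(X,T)$.

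For the reverse inclusion I would fix $\alpha \in I(X,T)$ and aim to produce a \emph{single} $g \in C(X,\mathbb{Z})$ with $\int g\,d\mu = \alpha$ for every $\mu$. For each $g \in C(X,\mathbb{Z})$ set $C_g = \{\mu \in \mathcal{M}(X,T) : \int g\,d\mu = \alpha\}$. Since $g$ is continuous, the map $\mu \mapsto \int g\,d\mu$ is weak-$*$ continuous, so each $C_g$ is closed. The hypothesis $\alpha \in I(X,T)$ says exactly that every $\mu$ lies in some $C_g$, i.e.\ the family $(C_g)_g$ covers $\mathcal{M}(X,T)$. The two structural facts I would exploit are: first, that $X$, being a Cantor space, has only countably many clopen subsets (each clopen set is a finite union of members of a fixed countable clopen base), so that $C(X,\mathbb{Z})$, consisting of finite integer combinations of characteristic functions of clopen sets, is countable; and second, that $\mathcal{M}(X,T)$ is a nonempty compact metrizable, hence Baire, convex space. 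Applying the Baire category theorem to the countable closed cover $(C_g)_g$ then yields some $g_0$ with $C_{g_0}$ of nonempty interior.

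It then remains to upgrade ``$\int g_0\,d\mu = \alpha$ on a nonempty open set'' to ``$\int g_0\,d\mu = \alpha$ everywhere''. Here I would use that $F \colon \mu \mapsto \int g_0\,d\mu$ is \emph{affine} together with the convexity of $\mathcal{M}(X,T)$: if $F \equiv \alpha$ on a nonempty open $\mathcal{U}$, then for any $\nu \in \mathcal{M}(X,T)$ and any $\mu_0 \in \mathcal{U}$ the segment $\mu_t = (1-t)\mu_0 + t\nu$ stays in $\mathcal{M}(X,T)$ and lies in $\mathcal{U}$ for small $t > 0$; since $t \mapsto F(\mu_t)$ is affine and constant near $0$, it is constant on $[0,1]$, forcing $F(\nu) = \alpha$. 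Hence $\int g_0\,d\mu = \alpha$ for all $\mu$, and $g_0$ is the required single function, giving $\alpha$ in the right-hand set.

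The main obstacle is precisely the passage from the pointwise hypothesis (for each $\mu$ a \emph{$\mu$-dependent} integer function realizes $\alpha$) to the desired uniform conclusion (one function realizing $\alpha$ for all $\mu$), with the realization being \emph{exact} rather than approximate. A naive uniform-approximation strategy is doomed, because the subgroup $\{\mu \mapsto \int f\,d\mu : f \in C(X,\mathbb{Z})\}$ of affine functions is generally not closed (already in the uniquely ergodic case its range is a dense subgroup of $\mathbb{R}$), so approximate realizations of $\alpha$ need not converge to an exact one. The whole point of the argument is that the combination of Baire category with affine rigidity bypasses approximation entirely and delivers exactness at once.
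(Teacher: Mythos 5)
Your proof is correct. Both inclusions are handled properly: the easy direction is indeed immediate from the definition of $I(X,T)$ as an intersection, and the substantial direction is carried by three facts you verify accurately — $C(X,\mathbb{Z})$ is countable because a Cantor space has only countably many clopen sets and each integer-valued continuous function is a finite integer combination of characteristic functions of clopen sets; $\mathcal{M}(X,T)$ is a nonempty, compact, metrizable, convex subset of the weak-$*$ topology, hence a Baire space; and the affine rigidity step (constancy of an affine function on a relatively open subset of a convex set propagates along segments to the whole set) is exactly right. There is nothing to compare against inside the paper itself: the paper does not prove this proposition but quotes it from the literature (\cite[Corollary 2.6]{GHH:18}, \cite[Lemma 12]{CortDP:16}), so your argument serves as a self-contained substitute for those citations. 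Two remarks: your Baire-category-plus-affineness scheme is the natural argument here and is essentially the standard route for statements of this kind about image subgroups; and it is worth noting that your proof never invokes minimality of $(X,T)$, so it actually establishes the identity for an arbitrary Cantor system, slightly more than the proposition claims.
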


We give an other description of $I(X,T)$ using the following well known lemma.
\begin{lemma}{\cite[Lemma 2.4]{GW}}
\label{lemma:GW}
Let $(X,T)$ be a minimal Cantor system. 
Let $f\in C (X, \mathbb{Z} )$ such that $\int_X f  d \mu $ belongs to $]0,1[$ for every $\mu \in \mathcal{M} (X , T)$.
Then, there exists a clopen set $U$ such that $\int_X f  d \mu = \mu (U)$ for every $\mu \in \mathcal{M} (X , T)$.
\end{lemma}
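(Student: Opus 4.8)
The plan is to realise the value $\int_X f\,d\mu$ directly as the measure of a clopen union of levels in a sufficiently tall Kakutani--Rohlin tower partition, exploiting the fact that the ``column sums'' of $f$ are forced into an admissible range.

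First I would record a uniform Birkhoff estimate. Since $\mathcal{M}(X,T)$ is weak-$*$ compact and $\mu\mapsto\int_X f\,d\mu$ is affine and continuous, the quantities $m=\min_{\mu}\int_X f\,d\mu$ and $M=\max_{\mu}\int_X f\,d\mu$ are attained and, by hypothesis, satisfy $0<m\le M<1$. I claim there is an integer $N_0$ such that $0<\frac1n\sum_{j=0}^{n-1}f(T^jx)<1$ for every $x\in X$ and every $n\ge N_0$. This I would prove by contradiction: if it failed, there would be sequences $n_i\to\infty$ and $x_i\in X$ with the empirical average outside $(0,1)$; passing to a weak-$*$ limit of the empirical measures $\frac1{n_i}\sum_{j<n_i}\delta_{T^jx_i}$, which is automatically $T$-invariant, would produce an element of $\mathcal{M}(X,T)$ whose integral of $f$ lies outside $[m,M]$, a contradiction.

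Next I would choose the tower. From the standard refining sequence of Kakutani--Rohlin partitions of the minimal Cantor system $(X,T)$ (whose minimal height tends to infinity and whose atoms shrink to points), I pick one partition with atoms $\{T^jB(k):1\le k\le K,\ 0\le j<h_k\}$ so that every height satisfies $h_k\ge N_0$ and every atom is contained in a single level set of $f$; the latter is possible because $f$, being continuous and integer-valued, is locally constant with clopen level sets. Then $f$ takes a constant value $f_{k,j}\in\mathbb{Z}$ on the atom $T^jB(k)$, and for any $b\in B(k)$ the column sum $s_k:=\sum_{j=0}^{h_k-1}f_{k,j}$ equals the Birkhoff sum $\sum_{j=0}^{h_k-1}f(T^jb)$. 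By the uniform estimate and $h_k\ge N_0$ we obtain $0<s_k/h_k<1$, so the integers $s_k$ obey $0\le s_k\le h_k$.

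I would then set $U=\bigcup_{k=1}^{K}\bigcup_{j=0}^{s_k-1}T^jB(k)$, the union of the bottom $s_k$ levels of each column, which is clopen as a finite union of atoms. For every $\mu\in\mathcal{M}(X,T)$, invariance gives $\mu(T^jB(k))=\mu(B(k))$, whence $\mu(U)=\sum_{k}s_k\,\mu(B(k))$; on the other hand, splitting the integral over the atoms yields $\int_X f\,d\mu=\sum_{k}\mu(B(k))\sum_{j=0}^{h_k-1}f_{k,j}=\sum_{k}s_k\,\mu(B(k))$. Thus $\mu(U)=\int_X f\,d\mu$ for all $\mu$, as required. The main obstacle is the uniform Birkhoff bound: it is exactly what guarantees, simultaneously for every column, that the integer $s_k$ lands in the admissible range $[0,h_k]$, and it is where both the compactness of $\mathcal{M}(X,T)$ and the strict two-sided bound $\int_X f\,d\mu\in(0,1)$ are essential; everything else is the routine bookkeeping of producing a single Kakutani--Rohlin partition that is at once tall enough (heights $\ge N_0$) and fine enough to resolve the level sets of $f$.
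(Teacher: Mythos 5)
The paper offers no proof of this lemma to compare against: it is quoted as a well-known result of Glasner--Weiss (\cite[Lemma 2.4]{GW}), and the authors simply cite it. Your argument is correct and is essentially the standard proof of that cited result: a uniform Birkhoff bound ($0 < \frac1n\sum_{j<n} f(T^jx) < 1$ for all $x$ and all $n \geq N_0$) obtained by contradiction from weak-$*$ compactness and the $T$-invariance of limits of empirical measures, followed by a Kakutani--Rohlin partition that is simultaneously tall (all heights $\geq N_0$) and fine enough that $f$ is constant on atoms, with $U$ assembled from $s_k$ levels of the $k$-th column; both ingredients (invariance of empirical limits, and existence of KR partitions with prescribed minimal height refining a given clopen partition) are standard for minimal Cantor systems, so there is no gap.
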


For a family of real
numbers $N = \{\alpha_{i} : i \in J\}$, we let $\langle N \rangle$ denote the abelian additive group generated by these real numbers. 

\begin{proposition}\label{prop:cob}
Let $(X,T)$ be a minimal Cantor system.
Then,
\begin{equation}\label{eq:I}
I(X,T)
 =  \left\langle \{ \alpha : \exists \  \hbox{\rm  clopen set } U \subset  X ,  \  \alpha =  \mu (U) \   \forall  \mu \in \mathcal{M}(X,T)\}\right\rangle.
%& =  \left\langle \{\mu([w]): w\in\mathcal{L}_c(X), \   \mu[w]=\mu'[w] \mbox{ for all }  \mu, \mu' \in \mathcal{M}(X,S)\}\right\rangle .
\end{equation}
\end{proposition}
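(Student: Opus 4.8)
The plan is to establish the two inclusions of \eqref{eq:I} separately, with the nontrivial direction handled by splitting off the integer part of an element of $I(X,T)$ and realizing the remaining fractional part via Lemma~\ref{lemma:GW}. Throughout, write $G$ for the additive group on the right-hand side of \eqref{eq:I}.

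For the inclusion $G\subseteq I(X,T)$, I would first record that $I(X,T)$ is itself a subgroup of $\mathbb{R}$, since it is an intersection of the subgroups $\{\int f\,d\mu : f\in C(X,\mathbb{Z})\}$ of $\mathbb{R}$ (each being the image of the homomorphism $f\mapsto \int f\,d\mu$). It therefore suffices to place every generator of $G$ inside $I(X,T)$: if $U$ is clopen and $\alpha=\mu(U)$ for all $\mu\in\mathcal{M}(X,T)$, then $\alpha=\int \chi_U\,d\mu$ for all $\mu$ with $\chi_U\in C(X,\mathbb{Z})$, so $\alpha\in I(X,T)$ by Proposition~\ref{prop:GW}.

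For the reverse inclusion, I would start by noting that $\mathbb{Z}\subseteq G$: taking $U=X$ yields $\mu(X)=1$ for every $\mu$, so $1$ is a generator of $G$ and hence $\langle 1\rangle=\mathbb{Z}\subseteq G$. Now fix $\alpha\in I(X,T)$. By Proposition~\ref{prop:GW} there is $g\in C(X,\mathbb{Z})$ with $\int g\,d\mu=\alpha$ for all $\mu$. Write $\alpha=n+\beta$ with $n=\lfloor\alpha\rfloor\in\mathbb{Z}$ and $\beta\in[0,1)$. If $\beta=0$, then $\alpha\in\mathbb{Z}\subseteq G$ and we are done; otherwise $\beta\in(0,1)$ and I would set $h:=g-n\in C(X,\mathbb{Z})$, so that $\int h\,d\mu=\beta\in(0,1)$ for every $\mu$. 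Lemma~\ref{lemma:GW} then delivers a clopen set $U$ with $\mu(U)=\int h\,d\mu=\beta$ for all $\mu$, exhibiting $\beta$ as a generator of $G$; thus $\alpha=n+\beta\in G$.

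The one point to get right — and the step where a naive attempt fails — is the decomposition of $g$. It is tempting to peel $g$ into its level sets $\{g\ge j\}$ and write $\alpha=\sum_j\mu(\{g\ge j\})$, but the individual measures $\mu(\{g\ge j\})$ generally depend on $\mu$ and so are \emph{not} generators of $G$. The correct move is to subtract only the integer $\lfloor\alpha\rfloor$, which leaves a continuous integer-valued function whose constant integral lands in $(0,1)$ — exactly the hypothesis of Lemma~\ref{lemma:GW}. Beyond this observation I anticipate no real difficulty, the argument being essentially a single application of the lemma.
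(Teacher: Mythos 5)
Your proof is correct and follows essentially the same route as the paper's: both reduce to the nontrivial inclusion, subtract an integer $n$ so that the remaining value lies in $]0,1[$, and then combine Proposition~\ref{prop:GW} with Lemma~\ref{lemma:GW} to realize that fractional part as $\mu(U)$ for a single clopen set $U$, using $\mu(X)=1$ to account for the integer part. The only differences are expository (you spell out the easy inclusion and the auxiliary function $g-n$, which the paper leaves implicit).
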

\begin{proof}
There is just one inclusion to prove. 
Let $\beta$ be  in $I(X , T )$.
If $\beta$ is an integer then using that $\beta =  \beta\mu (X)$ for all $\mu \in \mathcal{M} (X,T)$, it implies that $\beta$ belongs to the right member of the equality in \eqref{eq:I}.
Otherwise, let $n\in \mathbb{Z}$ be such that $\beta-n$ belongs to $]0,1[$.
From Proposition \ref{prop:GW} and Lemma \ref{lemma:GW} there exists a clopen set $U$ such that $\mu (U) = \beta-n$ for all $\mu \in \mathcal{M} (X,T)$.
It follows  $\mu (U)$ and $n$ belong to 
$\left\langle \{ \alpha : \exists \  \hbox{\rm  clopen set } U \subset  X ,  \  \alpha =  \mu (U) \   \forall  \mu \in \mathcal{M}(X,T)\}\right\rangle $. So it is also the case for $\beta$.
 \end{proof}

One can obtain a  more explicit  description of the set $I (X,S)$ for minimal 
subshifts.

\begin{proposition}\label{prop:cob}
Let $(X,S)$ be a minimal subshift.
Then,
$$\begin{array}{ll}
I(X,S)&=\bigcap_{\mu\in \mathcal{M}(X,S)} \left\langle \{\mu([w]): w\in\mathcal{L}(X)\}\right\rangle .
\end{array}$$

%$$\mathcal{L}_c(X)= \{ \mu[w] :   \mu[w]=\mu'[w] \mbox{ for all }  \mu, \mu' \in \mathcal{M}(X,S)\}.$$

In particular, if $(X,S)$ is uniquely ergodic with $\mu$ its unique $S$-invariant probability measure, then 
$I(X,S)=\left\langle \{\mu([w]): w\in\mathcal{L}(X)\}\right\rangle.$

\end{proposition}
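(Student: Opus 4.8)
The plan is to prove a pointwise statement, one invariant measure at a time, and then intersect. Recall that by definition $I(X,S)=\bigcap_{\mu\in\mathcal{M}(X,S)}A_{\mu}$, where $A_{\mu}:=\{\int f\,d\mu : f\in C(X,\mathbb{Z})\}$ is a subgroup of $\mathbb{R}$ (it is the image of the group $C(X,\mathbb{Z})$ under the homomorphism $f\mapsto\int f\,d\mu$). First I would show that, for each fixed $\mu\in\mathcal{M}(X,S)$, one has the identity $A_{\mu}=\langle\{\mu([w]):w\in\mathcal{L}(X)\}\rangle$. Granting this for every $\mu$, the proposition is immediate, and the uniquely ergodic case is simply the instance in which $\mathcal{M}(X,S)$ is a singleton.

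One inclusion is elementary. Any element of $\langle\{\mu([w]):w\in\mathcal{L}(X)\}\rangle$ is a finite sum $\sum_{w}n_{w}\mu([w])$ with $n_{w}\in\mathbb{Z}$; the function $f=\sum_{w}n_{w}\chi_{[w]}$ then lies in $C(X,\mathbb{Z})$ and satisfies $\int f\,d\mu=\sum_{w}n_{w}\mu([w])$, so this element belongs to $A_{\mu}$.

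For the reverse inclusion I would use the clopen structure of a subshift. Since $X$ is compact and zero-dimensional and the centered cylinders $[w^{-}.w^{+}]$ form a clopen basis, every $f\in C(X,\mathbb{Z})$ is locally constant, hence by compactness depends only on a bounded window: there is $n$ such that $f$ is constant on each length-$2n$ centered cylinder, and these cylinders partition $X$. Writing $f$ accordingly as a finite integer combination of cylinder indicators $f=\sum_{i}n_{i}\chi_{[w_{i}^{-}.w_{i}^{+}]}$ gives $\int f\,d\mu=\sum_{i}n_{i}\,\mu([w_{i}^{-}.w_{i}^{+}])$. The decisive point is to remove the anchoring at the origin: since $[w^{-}.w^{+}]=S^{|w^{-}|}[w^{-}w^{+}]$ and $\mu$ is $S$-invariant, one has $\mu([w^{-}.w^{+}])=\mu([w^{-}w^{+}])$, where $w^{-}w^{+}\in\mathcal{L}(X)$ whenever the cylinder is nonempty (empty cylinders contribute $0$). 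Hence $\int f\,d\mu\in\langle\{\mu([w]):w\in\mathcal{L}(X)\}\rangle$, which completes the pointwise equality.

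The only real content, and the step I would treat most carefully, is this last reduction: expressing an arbitrary continuous integer-valued function as an integer combination of cylinder indicators and then invoking shift-invariance to trade the two-sided cylinders $[w^{-}.w^{+}]$ for the one-sided cylinders $[w^{-}w^{+}]$ indexed by genuine factors of $\mathcal{L}(X)$. Everything else is bookkeeping. Once $A_{\mu}=\langle\{\mu([w]):w\in\mathcal{L}(X)\}\rangle$ is established for all $\mu$, intersecting over $\mu\in\mathcal{M}(X,S)$ yields $I(X,S)=\bigcap_{\mu\in\mathcal{M}(X,S)}\langle\{\mu([w]):w\in\mathcal{L}(X)\}\rangle$, and the uniquely ergodic statement follows by specializing to the unique invariant measure.
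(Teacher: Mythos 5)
Your proof is correct and follows the same overall strategy as the paper's: fix $\mu$, prove that the group $A_\mu=\{\int f\,d\mu : f\in C(X,\mathbb{Z})\}$ equals $\langle\{\mu([w]):w\in\mathcal{L}(X)\}\rangle$ using local constancy and compactness, then intersect over $\mu\in\mathcal{M}(X,S)$. The only real divergence is where shift-invariance enters. You decompose $f$ into centered cylinders and move each set by the shift, using $[w^-.w^+]=S^{|w^-|}[w^-w^+]$ and hence $\mu([w^-.w^+])=\mu([w^-w^+])$. The paper instead moves the function: it sets $g=f\circ S^k$, which is a one-sided cylinder function, and observes that $f-g$ is a coboundary by telescoping, so $\int f\,d\mu=\int g\,d\mu$ for every invariant $\mu$. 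The two devices are equivalent in substance, but the paper's formulation has a side benefit: it establishes the stronger fact that every $f\in C(X,\mathbb{Z})$ is cohomologous to a cylinder function supported on factors, a statement the paper reuses later (the proof of Theorem \ref{theo:cohoword} invokes precisely this argument to replace the characteristic functions of the atoms $S^k\tau_{[1,n]}([a])$ by those of the tower bases $\tau_{[1,n]}([a])$). Your set-level version is marginally more elementary but yields only the equality of integrals, which is indeed all this proposition needs.
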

\begin{proof}
The proof  of the  first  equality is  a direct consequence of the fact that   every function belonging to $C(X,\mathbb{Z})$ is cohomologous to some cylinder function in $C(X,\mathbb{Z})$, i.e.,
to some function  $h$  in $ C(X,\mathbb{Z})$   for which there exists $n>0$ such that for all $x\in X$, $h(x)$ depends only on $x_{[0,n)}$. 
Indeed, let $f\in C(X,\mathbb{Z})$. Since $f$ is integer-valued, it is locally constant, and  by compactness  of  $X$, there exists $k\in\mathbb{N}$ such that for all $x\in X$, $f(x)$ depends only on $x_{[-k,k]}$. Therefore, $g(x)=f\circ S^k(x)$ belongs to $C(X,\mathbb{Z})$ and depends only on $x_{[0,2k]}$ for all $x\in X$.
 Finally, $f-g=f-f\circ S^k(x)= f- f\circ S+ f\circ S - f \circ S^2 + \cdots +  f\circ S^{k-1}(x)+ f\circ S^{k}(x) $ is a coboundary. 
Hence,  $\int fd\mu=\int gd\mu$. Since $g$ is a cylinder function,  $g$ can be written as a finite sum of the form
$g=\sum\ell_u\chi_{[u]},$  $u\in \mathcal{L}(X)$ and   $\ell_u\in\mathbb{Z}$. Thus, 
$\int f d\mu=\sum_{}\ell_u\mu([u])\in \left\langle \{\mu([w]): w\in\mathcal{L}(X)\}\right\rangle.$
\end{proof}

%%%%%%%%%%%%%%%%%%%%%%%%%%%%%%%%%%%%%%%%%%%
\section{Primitive unimodular proper $\S$-adic subshifts}\label{sec:Sadic}
%%%%%%%%%%%%%%%%%%%%%%%%%%%%%%%%%%%%%%%%%%%

%\input{section3.tex}

In this section we first  recall  the notion of primitive unimodular proper  $\S$-adic subshift in Section~\ref{subsec:sadic}.
We then  illustrate  it  with  the class of  minimal dendric subshifts in Section \ref{subsec:tree}.

%%%%%%%%%%%%%%%%%%%%%%%%%%
\subsection{$\S$-adic subshifts} \label{subsec:sadic}
%%%%%%%%%%%%%%%%%%%%%%%%%%
 Let $\mathcal{A},\, \mathcal{B}$ be finite alphabets and let $\tau:\, \mathcal{A}^*\to \mathcal{B}^*$ be a \emph{non-erasing} morphism (also called a \emph{substitution} if $\mathcal{A} = \mathcal{B}$). Let us  note  that a morphism is uniquely determined by  its  values on the alphabet ${\mathcal A}$
 and this will be the way we will define them (see e.g. Example \ref{ex:fibo}).  By non-erasing, we mean that the image of any letter is a non-empty word.  We stress the fact that all morphisms are assumed to be non-erasing in the following.
Using concatenation, we extend $\sigma$ to~$\mathcal{A}^\mathbb{N}$ and~$\mathcal{A}^\mathbb{Z}$. 
With a morphism $\tau :\A^* \to \B^*$, where $\A$ and $\B$ are finite alphabets, we classically associate an {\em incidence matrix} $M_\tau$ indexed by $\B \times \A$ such that  for every $(b,a)\in \B \times \A$, its entry at position $(b,a)$ is the number of occurrences of $b$ in $\tau(a)$.
Alphabets are always assumed to have cardinality at least 2.
The morphism $\tau$ is said to be {\em left proper} (resp. {\em right proper}) when there exist a letter $b \in \B$ such that for all $a \in \A$, $\tau(a)$ starts with $b$ (resp., ends with $b$).
It is said to be {\em proper} if it is both left and right proper.

We recall the definition of an $\S$-adic subshift as stated in \cite{BSTY}, see also \cite{Berthe&Delecroix:14} for more on $\S$-adic subshifts.
Let $\boldsymbol{\tau} = (\tau_n : \mathcal{A}_{n+1}^* \to \mathcal{A}_n^*)_{n \geq 1}$ be a sequence of morphisms such that 
$
\max_{a \in \A_n} |\tau_1 \circ \cdots \circ \tau_{n-1}(a)|
$
goes to infinity when $n$ increases.   By non-erasing, we mean that the image of any letter is a non-empty word.
For $1\leq n<N$, we define $\tau_{[n,N)} = \tau_n \circ \tau_{n+1} \circ \dots \circ \tau_{N-1}$ and $\tau_{[n,N]} = \tau_n \circ \tau_{n+1} \circ \dots \circ \tau_{N}$. 
For $n\geq 1$, the \emph{language $\mathcal{L}^{(n)}({\boldsymbol{\tau}})$ of level $n$ associated with $\boldsymbol{\tau}$} is defined~by 
\[
\mathcal{L}^{(n)}({\boldsymbol{\tau}}) = \big\{w \in \mathcal{A}_n^* \mid \mbox{$w$ occurs in $\tau_{[n,N)}(a)$ for some $a \in\mathcal{A}_N$ and $N>n$}\big\}.
\]

As $\max_{a \in \A_n} |\tau_{[1,n)}(a)|$ goes to infinity when $n$ increases, $\mathcal{L}^{(n)}({\boldsymbol{\tau}})$ defines a non-empty subshift $X_{\boldsymbol{\tau}}^{(n)}$ that we call the {\em subshift generated by $\mathcal{L}^{(n)}({\boldsymbol{\tau}})$}.
More precisely, $X_{\boldsymbol{\tau}}^{(n)}$ is the set of points $x \in \mathcal{A}_n^\mathbb{Z}$ such that $\mathcal{L} (x) \subseteq \mathcal{L}^{(n)}({\boldsymbol{\tau}})$.  Note that it may happen that $\mathcal{L}(X_{\boldsymbol{\tau}}^{(n)})$ is strictly contained in $\mathcal{L}^{(n)}({\boldsymbol{\tau}})$.
%Also, from the definition, we have $\tau_n(X_{\boldsymbol{\tau}}^{(n+1)}) \subset X_{\boldsymbol{\tau}}^{(n)}$.
%More precisely, for all $m < n$ we have $X_{\boldsymbol{\tau}}^{(m)} = \{S^k \tau_{[m,n)}(x) \mid x \in X_{\boldsymbol{\tau}}^{(n)}, 0 \leq k < |\tau_{[m,n)}(x_0)|\}$.
We set $\mathcal{L}({\boldsymbol{\tau}}) = \mathcal{L}^{(1)}({\boldsymbol{\tau}})$,$X_{\boldsymbol{\tau}} = X_{\boldsymbol{\tau}}^{(1)}$ and call $(X_{\boldsymbol{\tau}},S)$ the \emph{$\S$-adic subshift} generated by the \emph{directive sequence}~$\boldsymbol{\tau}$.

We say that  $\boldsymbol{\tau}$ is {\em primitive} if, for any $n\geq 1$, there exists $N>n$ such that 
$M_{\tau_{[n,N)}} >0$, {\em i.e.}, for all $a \in \A_N$, $\tau_{[n,N)}(a)$ contains occurrences of all letters of $\A_{n}$.
Of course, $M_{\tau_{[n,N)}}$ is equal to  $M_{\tau_n} M_{ \tau_{n+1}}\cdots  M_{\tau_{N-1}}$.
Observe that if $\boldsymbol{\tau}$ is primitive, then $\min_{a \in \A_n} |\tau_{[1,n)}(a)|$ goes to infinity when $n$ increases.
In  the primitive case $\mathcal{L}(X_{\boldsymbol{\tau}}^{(n)})= \mathcal{L}^{(n)}({\boldsymbol{\tau}})$,  and $X_{\boldsymbol{\tau}}^{(n)}$ is a minimal subshift (see for instance ~\cite[Lemma 7]{Durand:2000}).

We say that $\boldsymbol{\tau}$ is ({\em left, right}) {\em proper} whenever each morphism $\tau_n$ is (left, right) proper.
We  also say that $\boldsymbol{\tau}$ is {\em unimodular} whenever, for all $n \geq 1$, $\A_{n+1} = \A_n$ and the matrix $M_{\tau_n}$ has determinant of absolute value 1.
By abuse of language, we say that a subshift is a (unimodular, left or right proper, primitive) $\S$-adic subshift if there exists a (unimodular, left or right proper, primitive) sequence of morphisms $\boldsymbol{\tau}$ such that $X = X_{\boldsymbol{\tau}}$.

Let us give another way to define $X_{\boldsymbol{\tau}}$ when $\boldsymbol{\tau}$ is primitive and proper. 
For an endomorphism $\tau$ of $\A^*$, let $\Omega (\tau ) = \overline{\bigcup_{k\in \Z} S^k \tau ( \A^\Z )}$.

\begin{lemma}
\label{lemma:omega}
Let $\boldsymbol{\tau} = (\tau_n : \mathcal{A}_{n+1}^* \to \mathcal{A}_n^*)_{n \geq 1}$ be a sequence of morphisms such that $\min_{a \in \A_n} |\tau_{[1,n)}(a)|$ goes to infinity when $n$ increases.
Then, 
$$
X_{\boldsymbol{\tau}} \subset \bigcap_{n\in \N} \Omega (\tau_{[1,n]} ).
$$
Furthermore, when $\boldsymbol{\tau}$ is primitive and proper, then the equality  $
X_{\boldsymbol{\tau}} = \bigcap_{n\in \N} \Omega (\tau_{[1,n]} )
$ holds.
\end{lemma}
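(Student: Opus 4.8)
The plan is to translate membership in $\Omega$ into a purely combinatorial statement and treat the two inclusions separately. I would first record the elementary reformulation that a point $y\in\A_1^\Z$ lies in $\Omega(\tau_{[1,n]})$ if and only if each of its factors is a factor of $\tau_{[1,n]}(u)$ for some finite word $u\in\A_{n+1}^*$. This is immediate from the definition of the closure: the central windows of $y$ must coincide with windows of shifts of points $\tau_{[1,n]}(z)$ with $z\in\A_{n+1}^\Z$, and any finite factor of such a point already occurs in the image of a finite factor of $z$, while conversely any finite image extends to a two-sided point. Granting this, the inclusion $X_{\boldsymbol\tau}\subseteq\bigcap_n\Omega(\tau_{[1,n]})$ is almost formal. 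Fixing $n$ and $x\in X_{\boldsymbol\tau}$, let $w$ be a factor of $x$. There are only finitely many words $\tau_{[1,N)}(a)$ with $1<N\le n$ and $a\in\A_N$; let $B_n$ bound their lengths. Since $x$ is bi-infinite, $w$ extends to a factor $w'$ of $x$ with $|w'|>B_n$, and as $w'\in\La^{(1)}(\boldsymbol\tau)$ occurs in some $\tau_{[1,N)}(a)$, the length forces $N>n$; then $\tau_{[1,N)}=\tau_{[1,n]}\circ\tau_{[n+1,N)}$ exhibits $w'$, hence $w$, as a factor of $\tau_{[1,n]}(u)$ with $u=\tau_{[n+1,N)}(a)\in\A_{n+1}^*$. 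This step uses only the finiteness of the alphabets.

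For the converse inclusion, which is where primitivity and properness enter, I would take $y\in\bigcap_n\Omega(\tau_{[1,n]})$ and a factor $w$ of $y$, aiming at $w\in\La^{(1)}(\boldsymbol\tau)$; this yields $\La(y)\subseteq\La^{(1)}(\boldsymbol\tau)=\La(X_{\boldsymbol\tau})$ (the last equality holding by primitivity) and hence $y\in X_{\boldsymbol\tau}$. By primitivity $\min_{a\in\A_{n+1}}|\tau_{[1,n]}(a)|\to\infty$, so I would fix $n$ large enough that this minimum exceeds $|w|$. Using $y\in\Omega(\tau_{[1,n+1]})$ together with $\tau_{[1,n+1]}=\tau_{[1,n]}\circ\tau_{n+1}$, I write $w$ as a factor of $\tau_{[1,n]}(V)$ where $V=\tau_{n+1}(v')\in\A_{n+1}^*$ lies in the image of $\tau_{n+1}$. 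Because each block $\tau_{[1,n]}(V_s)$ is longer than $w$, an occurrence of $w$ in $\tau_{[1,n]}(V)$ meets at most two consecutive blocks, so $w$ is a factor of $\tau_{[1,n]}(cd)$ for two consecutive letters $cd=V_sV_{s+1}$ of $V$.

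It then remains to show $cd\in\La^{(n+1)}(\boldsymbol\tau)$, for then $cd$ occurs in some $\tau_{[n+1,N')}(a)$ and $w$, being a factor of $\tau_{[1,n]}(cd)$, occurs in $\tau_{[1,n]}(\tau_{[n+1,N')}(a))=\tau_{[1,N')}(a)$, giving $w\in\La^{(1)}(\boldsymbol\tau)$. Here I would use that $cd$ is a factor of $V=\tau_{n+1}(v')$ and split into two cases. If $cd$ lies inside a single block $\tau_{n+1}(v'_k)$, then $cd$ occurs in $\tau_{[n+1,n+2)}(v'_k)$ and we are done. Otherwise $cd$ straddles a block boundary, and properness of $\tau_{n+1}$ forces $c$ to be the common last letter $e$ and $d$ the common first letter $b$ of the images of $\tau_{n+1}$; thus $cd=eb$ is exactly the junction word occurring between any two consecutive $\tau_{n+1}$-images. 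Choosing, by primitivity, a level $N'$ with $\min_{a}|\tau_{[n+2,N')}(a)|\ge 2$, each $\tau_{[n+1,N')}(a)=\tau_{n+1}(\tau_{[n+2,N')}(a))$ then contains such a junction, so $eb\in\La^{(n+1)}(\boldsymbol\tau)$, completing the argument.

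The main obstacle is precisely this straddling case. Membership in $\Omega(\tau_{[1,n]})$ only places the factors of $y$ inside images of arbitrary words, whereas membership in $X_{\boldsymbol\tau}$ requires them inside images of single letters, and the gap is a two-letter factor $cd$ that may sit across a cutting point. Properness is what makes every cutting point carry the same junction $eb$, and primitivity is what guarantees this junction genuinely occurs inside the image of a single higher-level letter; without both hypotheses the factor $cd$ need not belong to $\La^{(n+1)}(\boldsymbol\tau)$ and the reverse inclusion can fail.
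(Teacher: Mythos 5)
Your proof is correct, and there is in fact nothing in the paper to compare it against: the paper's own ``proof'' of Lemma~\ref{lemma:omega} reads ``The proof is left to the reader.'' Your write-up supplies the missing argument in what is surely the intended way: the combinatorial reformulation of membership in $\Omega(\tau_{[1,n]})$ (every factor of $y$ occurs in $\tau_{[1,n]}(u)$ for some finite $u$), the length argument forcing $N>n$ for the first inclusion, and, for the reverse inclusion, the reduction of a factor $w$ to an image $\tau_{[1,n]}(cd)$ of two consecutive letters of $V=\tau_{n+1}(v')$, followed by the dichotomy ``$cd$ inside one $\tau_{n+1}$-block'' versus ``$cd$ straddling a cut,'' where properness forces the straddling word to be the common junction $eb$ and primitivity (via $M_{\tau_{[n+2,N')}}>0$ and $\Card(\A_{n+2})\geq 2$) guarantees that this junction occurs inside $\tau_{[n+1,N')}(a)$ for a single letter $a$. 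You also correctly isolate where each hypothesis is used, and your observation that the first inclusion needs only finiteness of the alphabets is accurate.

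One detail worth adding for completeness: your reduction to consecutive letters $cd=V_sV_{s+1}$ silently assumes $|V|\geq 2$ (and, when $w$ sits inside a single block, that a neighbouring letter exists). This is harmless: either extend $v'$ by one letter, using that a factor of $\tau_{[1,n+1]}(v')$ is also a factor of $\tau_{[1,n+1]}(v'a)$, or note that if $w$ lies inside a single block $\tau_{[1,n]}(c)$ with $c$ a letter of $\tau_{n+1}(v')$, then the single-block case of your final step applies directly to $c$ in place of $cd$.
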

\begin{proof}
The proof is left to the reader.
\end{proof}

With a left proper morphism $\sigma:\A^* \to \B^*$ such that $b \in \B$ is the first letter of all images $\sigma(a)$, $a \in \A$, we associate the right proper morphism $\overline{\sigma}:\A^* \to \B^*$ defined by 
$b\overline{\sigma}(a) = \sigma(a)b$ for all $a \in \A$.
For all $x \in A^\mathbb{Z}$, we thus have $\bar \sigma(x) = S \sigma(x)$.
The next result is a weaker version of~\cite[Corollary 2.3]{Durand&Leroy:2012}.

\begin{lemma}
\label{lemma:proper}
Let $(X,S)$ be an $\S$-adic subshift  generated by the
primitive and left proper directive sequence $\boldsymbol{\tau} = (\tau_n:\A_{n+1}^* \to \A_n^*)_{n \geq 1}$.
Then $(X,S)$  is also generated by 
the primitive and proper directive sequence $\tilde{\boldsymbol{\tau}} = (\tilde{\tau}_n)_{n \geq 1}$, where for all $n$, $\tilde{\tau}_n = \tau_{2n-1} \overline{\tau}_{2n}$.
In particular,  if $\boldsymbol{\tau}$ is unimodular, then so is $\tilde{\boldsymbol{\tau}}$.
\end{lemma}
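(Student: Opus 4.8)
The plan is to verify the three asserted properties of $\tilde{\boldsymbol{\tau}}$ — properness, primitivity (and unimodularity), and the fact that it generates the same system $(X,S)$ — in that order, the last being the crux.

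First I would settle properness and record an incidence-matrix identity. Since $\tau_{2n-1}$ is left proper, every image $\tau_{2n-1}(w)$ begins with the common first letter of $\tau_{2n-1}$; as $\tilde{\tau}_n(a) = \tau_{2n-1}(\overline{\tau}_{2n}(a))$, this makes $\tilde{\tau}_n$ left proper. By construction $\overline{\tau}_{2n}$ is right proper, so all words $\overline{\tau}_{2n}(a)$ end with one common letter $c$; hence every $\tilde{\tau}_n(a)$ ends with the fixed word $\tau_{2n-1}(c)$, so $\tilde{\tau}_n$ is right proper. Thus each $\tilde{\tau}_n$ is proper. Moreover $\overline{\tau}_{2n}(a)$ is a cyclic rearrangement of $\tau_{2n}(a)$ for each letter $a$, so it has the same Parikh vector; hence $M_{\overline{\tau}_{2n}} = M_{\tau_{2n}}$ and therefore $M_{\tilde{\tau}_{[1,n]}} = M_{\tau_{[1,2n]}}$, and more generally $M_{\tilde{\tau}_{[n,N)}} = M_{\tau_{[2n-1,2N-1)}}$.

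Next I would deduce primitivity and unimodularity from this identity. Because non-erasing morphisms have incidence matrices with no zero column, a strictly positive block $M_{\tau_{[2n-1,M)}}>0$ remains positive after any further right multiplication by such matrices; choosing $N$ with $2N-1\ge M$ then gives $M_{\tilde{\tau}_{[n,N)}} = M_{\tau_{[2n-1,2N-1)}}>0$, so $\tilde{\boldsymbol{\tau}}$ is primitive (hence minimal, with images of unbounded length, so it is a legitimate directive sequence). In the unimodular case all $\A_n$ coincide and $|\det M_{\tilde{\tau}_n}| = |\det M_{\tau_{2n-1}}|\,|\det M_{\tau_{2n}}| = 1$, so $\tilde{\boldsymbol{\tau}}$ is unimodular as claimed.

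The heart of the argument is the coincidence of the generated subshifts, and this is where I expect the only real bookkeeping. The plan is to combine the relation $\overline{\sigma}(x) = S\sigma(x)$ with the elementary fact that a morphism carries a shifted point to a shift of the image, i.e. $\rho(S^{k}y) = S^{\ell}\rho(y)$ for a suitable $\ell$ depending on $y$ and $k$. By induction on $n$ I would show that for every $x$ the point $\tilde{\tau}_{[1,n]}(x)$ lies in the $S$-orbit of $\tau_{[1,2n]}(x)$: indeed $\tilde{\tau}_n(x) = \tau_{2n-1}\big(S\tau_{2n}(x)\big)$ is a shift of $\tau_{2n-1}\tau_{2n}(x)$, and pushing this through $\tilde{\tau}_{[1,n-1]}$ (again turning an inner shift into an outer shift) together with the induction hypothesis propagates the orbit-equivalence. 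Since this holds pointwise, one gets the equality of closures
\[
\Omega(\tilde{\tau}_{[1,n]}) \;=\; \Omega(\tau_{[1,2n]}),
\]
and, as the sets $\Omega(\tau_{[1,m]})$ are nested decreasing (because $\tau_{[1,m+1]}(\A_{m+2}^{\Z})\subseteq\tau_{[1,m]}(\A_{m+1}^{\Z})$), passing to a subsequence yields $\bigcap_n \Omega(\tilde{\tau}_{[1,n]}) = \bigcap_m \Omega(\tau_{[1,m]})$. The careful tracking of these shift amounts through the composition is the step most likely to hide an index error, so it is the part I would write out fully.

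Finally I would conclude using Lemma~\ref{lemma:omega}. Applied to the primitive proper sequence $\tilde{\boldsymbol{\tau}}$ it gives $X_{\tilde{\boldsymbol{\tau}}} = \bigcap_n\Omega(\tilde{\tau}_{[1,n]}) = \bigcap_m\Omega(\tau_{[1,m]})$. Its inclusion part, applied to $\boldsymbol{\tau}$ (which is primitive, hence has $\min_{a}|\tau_{[1,n)}(a)|\to\infty$), gives $X = X_{\boldsymbol{\tau}}\subseteq\bigcap_m\Omega(\tau_{[1,m]}) = X_{\tilde{\boldsymbol{\tau}}}$. Since $X_{\boldsymbol{\tau}}$ is a nonempty closed $S$-invariant subset of the minimal system $X_{\tilde{\boldsymbol{\tau}}}$, minimality forces $X_{\boldsymbol{\tau}} = X_{\tilde{\boldsymbol{\tau}}}$, so $(X,S)$ is generated by the primitive proper (and, when applicable, unimodular) sequence $\tilde{\boldsymbol{\tau}}$.
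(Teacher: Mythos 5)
Your proposal is correct and follows essentially the same route as the paper's proof: check properness, primitiveness and unimodularity of $\tilde{\boldsymbol{\tau}}$, then use the relation $\bar\sigma(x) = S\sigma(x)$ together with Lemma~\ref{lemma:omega} to identify $\bigcap_n \Omega(\tilde{\tau}_{[1,n]})$ with $\bigcap_n \Omega(\tau_{[1,n]})$ and conclude $X_{\boldsymbol{\tau}} = X_{\tilde{\boldsymbol{\tau}}}$ by minimality. The only difference is one of detail: you spell out (correctly) the steps the paper dismisses as ``trivially proper'' and ``clear'', namely the incidence-matrix identity $M_{\overline{\tau}_{2n}} = M_{\tau_{2n}}$, the positivity argument for primitivity, and the orbit bookkeeping behind $\Omega(\tilde{\tau}_{[1,n]}) = \Omega(\tau_{[1,2n]})$.
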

\begin{proof}
Each morphism $\tilde{\tau}_n$ is trivially proper.
It is also clear that the unimodularity and the primitiveness of $\boldsymbol{\tau}$ are preserved in this process.
Using the relation $\bar \sigma(x) = S \sigma(x)$ and Lemma~\ref{lemma:omega}, we then get
\[
	X_{\boldsymbol{\tau}} 
	\subset \bigcap_{n \in \mathbb{N}} \Omega (\tau_{[1,n]})
 	= \bigcap_{n \in \mathbb{N}} \Omega (\tilde{\tau}_{[1,n]})
	= X_{\tilde{\boldsymbol{\tau}}}.
\] 
Since both $\boldsymbol{\tau}$ and $\tilde{\boldsymbol{\tau}}$ are primitive, the subshifts $X_{\boldsymbol{\tau}}$ and $X_{\tilde{\boldsymbol{\tau}}}$ are minimal, hence they are equal.
\end{proof}

\begin{lemma}
\label{lemma:aperiodicity}
All   primitive unimodular proper $\S$-adic subshifts are aperiodic. 
\end{lemma}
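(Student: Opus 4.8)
The plan is to reduce the statement to the infiniteness of the subshift and then to derive a contradiction from unimodularity through an integral length argument; I note in advance that only primitivity and unimodularity will be used, properness playing no role. Since $\boldsymbol{\tau}$ is primitive, $(X_{\boldsymbol\tau},S)$ is minimal, and a minimal subshift is aperiodic as soon as it is infinite, so it suffices to prove that $X:=X_{\boldsymbol\tau}$ is infinite. Suppose, for contradiction, that $X$ is finite. By minimality it is then a single periodic orbit; let $p$ be the least period of its points and let $\mathbf{k}=(|w|_a)_{a\in\A}\in\mathbb{Z}^d$ be the Parikh vector of one period $w$. As primitivity forces every letter to occur in $\mathcal{L}(X)$, we have $\mathbf{k}\ge\mathbf{1}$ (all coordinates $\ge 1$) and $\|\mathbf{k}\|_1=p$.

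For each $n\ge 1$ I would set $\mathbf{k}^{(n)}:=M_{\tau_{[1,n)}}^{-1}\mathbf{k}$ (so $\mathbf{k}^{(1)}=\mathbf{k}$) and show it is a strictly positive integer vector. Integrality is immediate: unimodularity gives $|\det M_{\tau_{[1,n)}}|=1$, so $M_{\tau_{[1,n)}}^{-1}$ has integer entries. Nonnegativity comes from the fact that the letter-frequency vector $\mathbf{f}=\mathbf{k}/p$ of $X$ lies in the cone $M_{\tau_{[1,n)}}\mathbb{R}^d_{\ge 0}$ generated by the Parikh vectors of the blocks $\tau_{[1,n)}(a)$, whence $M_{\tau_{[1,n)}}^{-1}\mathbf{k}\ge 0$. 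Strict positivity then follows from primitivity: choosing $N>n$ with $M_{\tau_{[n,N)}}>0$ and using $\mathbf{k}^{(n)}=M_{\tau_{[n,N)}}\mathbf{k}^{(N)}$ with $\mathbf{k}^{(N)}\ge 0$ and $\mathbf{k}^{(N)}\ne\mathbf{0}$, a strictly positive matrix maps a nonnegative nonzero vector to a strictly positive one, so $\mathbf{k}^{(n)}\ge\mathbf{1}$. Finally, $M_{\tau_{[1,n+1)}}=M_{\tau_{[1,n)}}M_{\tau_n}$ yields the recursion $M_{\tau_n}\mathbf{k}^{(n+1)}=\mathbf{k}^{(n)}$.

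The contradiction then comes from the sequence $p_n:=\|\mathbf{k}^{(n)}\|_1$. Since the $a$-th column of $M_{\tau_n}$ sums to $|\tau_n(a)|$, the recursion gives $p_n=\sum_{a\in\A}|\tau_n(a)|\,k^{(n+1)}_a\ge\sum_{a\in\A}k^{(n+1)}_a=p_{n+1}$, because each $|\tau_n(a)|\ge 1$. Thus $(p_n)_n$ is a non-increasing sequence of positive integers, hence eventually constant, say $p_n=p_{n+1}$ for all $n\ge n_0$. For such $n$ one gets $\sum_{a}(|\tau_n(a)|-1)k^{(n+1)}_a=0$ with all summands nonnegative and $k^{(n+1)}_a\ge 1$, forcing $|\tau_n(a)|=1$ for every $a$; that is, $\tau_n$ is letter-to-letter for all $n\ge n_0$. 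Consequently $\tau_{[n_0,n)}$ is letter-to-letter for every $n>n_0$, so $|\tau_{[1,n)}(a)|=|\tau_{[1,n_0)}(\tau_{[n_0,n)}(a))|\le\max_{c\in\A}|\tau_{[1,n_0)}(c)|$ stays bounded, contradicting the defining requirement that $\max_{a}|\tau_{[1,n)}(a)|\to\infty$. Hence $X$ is infinite, and therefore aperiodic.

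I expect the main obstacle to be the nonnegativity of $M_{\tau_{[1,n)}}^{-1}\mathbf{k}$, i.e. the cone membership $\mathbf{f}\in M_{\tau_{[1,n)}}\mathbb{R}^d_{\ge 0}$; all remaining steps are bookkeeping. If one prefers a self-contained justification rather than invoking the standard frequency relation across $\mathcal{S}$-adic levels, it can be obtained by counting: over a window consisting of $N$ consecutive periods of a point of $X$, removing the boundedly many letters lying in the two incomplete boundary blocks expresses $N\mathbf{k}$ as $M_{\tau_{[1,n)}}\mathbf{N}+O_n(1)$, where $\mathbf{N}\ge 0$ records the numbers of complete blocks of each type; dividing by $N$ and letting $N\to\infty$ gives $\mathbf{f}\in M_{\tau_{[1,n)}}\mathbb{R}^d_{\ge 0}$.
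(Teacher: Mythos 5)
Your proof is correct, but it takes a genuinely different route from the paper's. The paper argues by word combinatorics directly on the periodic point: writing $x=\cdots uu.uu\cdots$ with least period $p=|u|\ge d$, it chooses $n$ so large that each $\tau_{[1,n]}(a)$, being a factor of the periodic word, has the form $s(a)\,u^{q(a)}\,p(a)$ with $s(a)$ a strict suffix and $p(a)$ a strict prefix of $u$; properness puts the cyclically closed word $b_0\cdots b_kb_0$ (where $\tau_{n+1}(b)=b_0\cdots b_k$) in the level-$(n+1)$ language, and a Fine--Wilf argument shows that the boundary pieces of consecutive blocks glue into exact copies of $u$, giving $|\tau_{[1,n+1]}(b)|\equiv 0 \pmod{|u|}$ for every letter $b$; since having all column sums divisible by $|u|$ forces $|u|$ to divide the determinant, this contradicts unimodularity. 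You replace this by linear algebra on Parikh vectors: pull $\mathbf{k}$ back through the integer matrices $M_{\tau_{[1,n)}}^{-1}$, get positivity from the letter-frequency cone and primitivity, and derive the contradiction from the non-increasing integer sequence $\|\mathbf{k}^{(n)}\|_1$. What your route buys is exactly what you advertise: properness is nowhere used (in the paper it is what closes the cycle fed to Fine--Wilf), so your argument proves aperiodicity of all primitive unimodular $\S$-adic subshifts, proper or not. What the paper's route buys is that it stays entirely at the level of words and avoids the frequency/cone analysis, which is the only delicate point on your side.

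On that delicate point, one precision is needed: a point of $X$ is not known a priori to decompose into level-$n$ blocks (Lemma~\ref{lemma:omega} only gives containment in a closure, and recognizability is not available at this stage of the paper), so the counting should not be phrased on ``a window of a point of $X$'' but on the factor $w^{m}$. Indeed $w^{m}\in\mathcal{L}(X)\subseteq\mathcal{L}^{(1)}(\boldsymbol{\tau})$, hence for $m$ large it occurs in some $\tau_{[1,M)}(a)$ with $M>n$ (the finitely many words $\tau_{[1,M)}(a)$ with $M\le n$ are too short), and $\tau_{[1,M)}(a)=\tau_{[1,n)}\bigl(\tau_{[n,M)}(a)\bigr)$ is a genuine concatenation of level-$n$ blocks, inside which your complete-blocks-plus-two-boundary-pieces count applies verbatim and yields $m\mathbf{k}=M_{\tau_{[1,n)}}\mathbf{N}_m+O_n(1)$ with $\mathbf{N}_m\ge 0$. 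With this reading, every step of your proposal checks out.
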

\begin{proof}
Let $\boldsymbol{\tau}$ be a primitive unimodular proper directive sequence on the alphabet $\A$ of cardinality $d \geq 2$. 
Suppose  that it has a periodic point $x$ of period $p$, where $p$ is the smallest period of $x$ ($p>0$).
By primitiveness, all letters of $\A$ occur in $x$, so $p \geq d$. 
We have $x = \cdots uu.uu\cdots $ for some word $u$ with $|u| = p$.
There exists some $n$ such that, for all $a $, one has $\tau_{[1,n]} (a) = s(a) u^{q(a)}p(a)$, where $s(a), p(a)$ are a strict suffix and a strict prefix of $u$ and $q(a) >1$.
Let $b\in \mathcal{A}$ and set $\tau_{n+1} (b) = b_0 b_1 \cdots b_k$. 
As the directive sequence $\boldsymbol{\tau}$ is proper,  $b_0 b_1 \cdots b_k b_0$ is also a word in $\mathcal{L}^{(n+1)}  (\boldsymbol{\tau})$.
By a classical argument due to Fine and Wilf \cite{Fine:65}, one  has $p(b_0)s(b_1) = p(b_i)s(b_{i+1})= p(b_k)s(b_0) = u$  for $1\leq i\leq k-1$.
Hence
$$
|\tau_1 \cdots \tau_{n+1} (b) |
\equiv
|s(c)p(c)s(b_1)p(b_1) \cdots  s(b_k)p(b_k)|
\equiv
0 \mbox{ modulo }  |u|,
$$ 
which contradicts the  unimodularity of  $\boldsymbol{\tau}$.
\end{proof}

The next two results will be important  for the  computation of  the dimension group of primitive unimodular proper $\S$-adic subshifs.
The first one is a weaker version of~\cite[Theorem 3.1]{BSTY}.

\begin{theorem}[\cite{BSTY}]
\label{theo:BSTY}
Let $\tau :\A^* \to \B^*$ be  such that  its incidence  matrix $M_\tau$ is unimodular.
Then, for any aperiodic $y \in \B^\Z$, there exists at most one $(k,x) \in \N \times A^\Z$ such that $y = S^k \tau(x)$, with $0 \leq k < |\tau(x_0)|$.
\end{theorem}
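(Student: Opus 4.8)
The plan is to reduce the statement to the \emph{recognizability} of aperiodic bi-infinite words, that is, to the fact that an aperiodic $w\in\B^\Z$ admits at most one factorization into blocks of the form $\tau(a)$, $a\in\A$. Suppose $y=S^k\tau(x)=S^{k'}\tau(x')$ with $0\le k<|\tau(x_0)|$ and $0\le k'<|\tau(x'_0)|$, and set $w=\tau(x)$ and $m=k-k'$. Applying $S^{-k'}$ to $S^k\tau(x)=S^{k'}\tau(x')$ gives $\tau(x')=S^m w$. Since $w$ is aperiodic (being a shift of the aperiodic word $y$), it then carries two factorizations into $\tau$-blocks: the one coming from $x$, with set of cutting points $C$ (normalised so that $0\in C$), and the one coming from $x'$, whose cutting points, transported into the coordinates of $w$ through $\tau(x')=S^m w$, form a set $C''$. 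The whole statement follows once we know that such a factorization is unique.

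First I would record the two consequences of unimodularity that are needed. The columns of $M_\tau$ are exactly the abelianization vectors of the words $\tau(a)$, so $|\det M_\tau|=1$ forces these vectors to be linearly independent, hence pairwise distinct; in particular $\tau$ is injective on letters. Moreover, by the defect theorem, if $\{\tau(a):a\in\A\}$ were not a code it would be contained in $F^*$ for some set $F$ with $|F|\le d-1$, so the $d$ columns of $M_\tau$ would lie in a subspace of dimension at most $d-1$ and $M_\tau$ would be singular; hence $\{\tau(a):a\in\A\}$ is a code. The code property guarantees that the finite word read between two common cutting points of $C$ and $C''$ admits a unique $\tau$-factorization, so if $C\cap C''$ were bi-infinite we would immediately get $C=C''$.

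Granting recognizability, I would finish the reduction as follows. Uniqueness gives $C=C''$ as sets. The coordinate of $w$ corresponding to the origin of $y$ is $k$, which lies in the $C$-block $[0,|\tau(x_0)|)$ and, for the second factorization, in the transported block $[m,m+|\tau(x'_0)|)$; since both are the unique block of the common partition $C=C''$ containing $k$, these intervals coincide, which forces $m=0$ and hence $k=k'$. Then $\tau(x')=w=\tau(x)$ with matching cutting points, and injectivity of $\tau$ on letters yields $x=x'$. Thus the pair $(k,x)$ is unique.

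It remains to prove recognizability, and this is the main obstacle. Assuming $C\neq C''$, the discussion above shows $C\cap C''$ is not bi-infinite, so on a suitable ray the two factorizations never synchronize. The offset between the left endpoints of the $C$-block and the $C''$-block containing a given position is bounded by $\max_{a\in\A}|\tau(a)|$, hence takes finitely many values; recording this offset together with the two block letters and the position inside them produces a finite set of local configurations. A pigeonhole argument then yields two positions with the same configuration, and a Fine--Wilf type period-propagation argument, exactly as in Mossé's recognizability theorem and in the form given in~\cite[Theorem 3.1]{BSTY}, upgrades the resulting local repetition into a global period of $w$, contradicting its aperiodicity. This period-propagation step, rather than the linear-algebraic input coming from unimodularity, is where the real work lies.
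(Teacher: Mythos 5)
A preliminary remark: the paper never proves this statement --- it is imported wholesale, as explicitly stated, as a weaker version of \cite[Theorem~3.1]{BSTY} --- so there is no internal proof to compare yours with, and your attempt must stand on its own. Its first half does hold up: unimodularity makes the columns of $M_\tau$, i.e.\ the abelianizations of the words $\tau(a)$, linearly independent, so the $\tau(a)$ are pairwise distinct and, via the defect theorem, $\{\tau(a) : a \in \A\}$ is a code; and, \emph{granted} uniqueness of the factorization of the aperiodic word $w = \tau(x)$ into $\tau$-blocks, your bookkeeping with the cut sets $C$ and $C''$ correctly forces $m = 0$, $k = k'$ and then $x = x'$.

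The gap is the recognizability statement itself, which you do not prove. Invoking ``the form given in \cite[Theorem~3.1]{BSTY}'' is circular, since that theorem \emph{is} the statement under discussion. More seriously, the sketch you give for the non-synchronized case cannot be completed as described, because there your argument uses only the code property, aperiodicity of $w$, and finiteness of the set of local configurations --- and these are insufficient. Consider $\tau :\ 1 \mapsto aa,\ 2 \mapsto ab,\ 3 \mapsto ba,\ 4 \mapsto bb$. Its images form a uniform, hence prefix, code, yet \emph{every} $y \in \{a,b\}^{\Z}$, in particular every aperiodic one, admits two distinct centered representations, obtained by cutting at even or at odd positions; your pigeonhole step produces plenty of repeated configurations for these two factorizations, but no global period can be extracted, since none exists. (This $\tau$ has a non-square incidence matrix, so it does not contradict the theorem; it shows that an argument which never uses unimodularity beyond the code property cannot prove it.) Consequently any valid proof must re-inject the full-rank/unimodularity hypothesis precisely in the non-synchronized case --- the ``linear-algebraic input'' is exactly where the real work lies, contrary to your closing claim. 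Note also that Moss\'e-type Fine--Wilf propagation is powered by self-similarity: the substitution is iterated on its own fixed point, so local coincidences can be pushed through powers of the map. Here $\tau$ is applied a single time to an arbitrary aperiodic $x$, with source and target alphabets unrelated, so there is no iteration along which a local repetition could propagate into a period.
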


\begin{proposition}
\label{prop:minimalSSIprimitif}
Let $\boldsymbol{\tau} = (\tau_n : \A^* \to \A^*)_{n \geq 1}$ be a unimodular proper sequence of morphisms such that $\max_{a \in \A} |\tau_{[1,n)}(a)|$ goes to infinity when $n$ increases.
Then $(X_{\boldsymbol{\tau}},S)$ is aperiodic and minimal if and only if $\boldsymbol{\tau}$ is primitive.
\end{proposition}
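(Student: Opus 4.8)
The plan is to prove the two implications separately; the forward one is immediate and the backward one carries all the content. For ``$\boldsymbol{\tau}$ primitive $\Rightarrow$ $(X_{\boldsymbol\tau},S)$ aperiodic and minimal'' I would simply invoke the two facts already available: in the primitive case $X_{\boldsymbol\tau}^{(1)}$ is minimal (recalled just after the definition of primitivity, see~\cite[Lemma 7]{Durand:2000}), and aperiodicity is precisely Lemma~\ref{lemma:aperiodicity}. So this direction requires no new work.

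For the converse I assume $(X_{\boldsymbol\tau},S)$ aperiodic and minimal and aim at primitivity, i.e.\ that for every $n_0$ there is $N$ with $M_{\tau_{[n_0,N)}}>0$. I would first record three reductions. (a) Every letter of $\A$ lies in $\mathcal{L}(X_{\boldsymbol\tau}^{(n)})$: by unimodularity no $M_{\tau_n}$ has a zero row, and a product of non-negative matrices none of which has a zero row has no zero row, so every letter occurs in some $\tau_{[n,N)}(a)$. (b) For each fixed $n$ the subshift $X_{\boldsymbol\tau}^{(n)}$ is again aperiodic (a periodic point $z$ would give the periodic point $\tau_{[1,n)}(z)\in X_{\boldsymbol\tau}^{(1)}$) and minimal (recognizability, Theorem~\ref{theo:BSTY}, identifies $X_{\boldsymbol\tau}^{(n)}$ with an induced system of the minimal system $X_{\boldsymbol\tau}^{(1)}$, the blocks $\tau_{[1,n)}(a)$ having bounded length since $n$ is fixed). (c) For fixed $n_0$ set $T_N=\bigcap_{a\in\A}\{\text{letters occurring in }\tau_{[n_0,N)}(a)\}$. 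Since $b\in T_N$ forces $b$ to occur in $\tau_{[n_0,N)}(c)$ for every letter $c$, and $\tau_{[n_0,N+1)}(a)=\tau_{[n_0,N)}(\tau_N(a))$, one gets $T_N\subseteq T_{N+1}$; moreover properness yields a letter $\ell_{n_0}$ beginning every $\tau_{[n_0,N)}(a)$, so $\ell_{n_0}\in T_N$ and $T_N\neq\emptyset$. As $M_{\tau_{[n_0,N)}}>0$ iff $T_N=\A$, primitivity at level $n_0$ amounts to $\bigcup_N T_N=\A$.

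Suppose this fails, say $b^\ast\in\A\setminus\bigcup_N T_N$. Then for every $N$ some image omits $b^\ast$, and by the pigeonhole principle there is a single letter $a^\ast$ with $b^\ast$ absent from $\tau_{[n_0,N)}(a^\ast)$ for all $N$ in an infinite set $\mathcal N$. If the lengths $|\tau_{[n_0,N)}(a^\ast)|$, $N\in\mathcal N$, are unbounded, then $X_{\boldsymbol\tau}^{(n_0)}$ contains arbitrarily long factors avoiding the letter $b^\ast$, although $b^\ast\in\mathcal{L}(X_{\boldsymbol\tau}^{(n_0)})$ by (a); this contradicts the uniform recurrence of $b^\ast$ guaranteed by minimality of $X_{\boldsymbol\tau}^{(n_0)}$ from reduction (b).

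There remains the case where these lengths stay bounded, which I expect to be the main obstacle. Then the column of $M_{\tau_{[n_0,N)}}$ indexed by $a^\ast$ is bounded, hence constant along a subsequence $N_1<N_2<\cdots$; since $M_{\tau_{[n_0,N_j)}}$ is invertible this forces $\tau_{[N_j,N_{j+1})}(a^\ast)=a^\ast$ for all $j$. Properness then makes $a^\ast$ the common first and last letter of all images of $\tau_{[N_j,N_{j+1})}$. I would then run a run-length argument: $a^\ast a^\ast\in\mathcal{L}(X_{\boldsymbol\tau}^{(N_j)})$ for every $j$, and applying $\tau_{[N_j,N_{j+1})}$ to a maximal run $a^{\ast m}$ flanked by two letters produces a run of length at least $m+2$ one level below; iterating yields arbitrarily long runs of $a^\ast$ in $X_{\boldsymbol\tau}^{(N_1)}$, whence the periodic point $\cdots a^\ast a^\ast\cdots$ lies in $X_{\boldsymbol\tau}^{(N_1)}$, contradicting its aperiodicity from reduction (b). The delicate points, where I would spend most care, are the downward transfer of minimality and aperiodicity in (b) via recognizability, and the bookkeeping of the run-length growth in the bounded case.
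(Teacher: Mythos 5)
Your forward implication, reductions (b) and (c), and the bounded branch of your dichotomy are essentially sound: in particular the observation that a constant column of $M_{\tau_{[n_0,N_j)}}$ together with its invertibility forces $\tau_{[N_j,N_{j+1})}(a^\ast)=a^\ast$ is correct and is a nice variant of the step where the paper shows $\min_{a}|\tau_{[1,n)}(a)|\to\infty$; note that in this branch your occasional writing of $\mathcal{L}(X^{(N_j)}_{\boldsymbol{\tau}})$ can be harmlessly read as $\mathcal{L}^{(N_j)}(\boldsymbol{\tau})$, since arbitrarily long runs of $a^\ast$ in $\mathcal{L}^{(N_1)}(\boldsymbol{\tau})$ already force the fixed point $\cdots a^\ast a^\ast\cdots$ into $X^{(N_1)}_{\boldsymbol{\tau}}$ by compactness. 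The genuine gap is reduction (a), on which your unbounded branch entirely rests. The no-zero-row argument only shows that every letter $b$ occurs in some image $\tau_{[n,N)}(a)$, i.e.\ that $b\in\mathcal{L}^{(n)}(\boldsymbol{\tau})$; what you claim, and what you need, is the stronger statement $b\in\mathcal{L}(X^{(n)}_{\boldsymbol{\tau}})$, i.e.\ that $b$ occurs in some bi-infinite point of the subshift. The paper warns explicitly, right after the definition, that $\mathcal{L}(X_{\boldsymbol{\tau}}^{(n)})$ may be strictly contained in $\mathcal{L}^{(n)}({\boldsymbol{\tau}})$, and that gap is exactly the crux here: in your unbounded case, compactness produces a point $x\in X^{(n_0)}_{\boldsymbol{\tau}}$ omitting $b^\ast$, and minimality then gives $\mathcal{L}(X^{(n_0)}_{\boldsymbol{\tau}})=\mathcal{L}(x)\not\ni b^\ast$. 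Without the strong form of (a) this is not absurd --- a priori the letter $b^\ast$ could simply be unused by every point of $X^{(n_0)}_{\boldsymbol{\tau}}$ --- so this branch ends without a contradiction.

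Closing that gap is not a formality: one must produce occurrences of $b^\ast$ inside words of $\mathcal{L}^{(n_0)}(\boldsymbol{\tau})$ whose length grows \emph{on both sides} of the occurrence, so that a limit yields an actual point of $X^{(n_0)}_{\boldsymbol{\tau}}$ containing $b^\ast$. This is precisely where the paper invests its effort: it first proves, once and for all, that $\min_{a}|\tau_{[1,n)}(a)|\to\infty$ (the global analogue of your bounded case), and only then, in the primitivity step, uses the no-zero-row property to build a nested chain $a'_N=a_N$ with $a'_n$ occurring in $\tau_n(a'_{n+1})$, so that $a_N$ occurs in $\tau_{[N,n)}(a'_n)$ with lengths tending to infinity; that is what licenses the conclusion $a_N\in\mathcal{L}(X^{(N)}_{\boldsymbol{\tau}})$. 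A repair of your argument would run the length dichotomy on such a chain of letters \emph{carrying} occurrences of $b^\ast$, rather than on the avoiding letter $a^\ast$: the bounded sub-case is then handled by your invertibility trick, while the unbounded sub-case still needs properness to rule out occurrences of $b^\ast$ sticking to the boundary of all images. As written, statement (a) quietly hides most of the content of the proposition.
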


\begin{proof}
Recall that any $\S$-adic subshift with a primitive directive sequence is minimal (see, e.g.~\cite[Lemma 7]{Durand:2000}) and that aperiodicity is proved in Lemma \ref{lemma:aperiodicity}.

We only have to show that the condition is necessary.
We assume that $(X_{\boldsymbol{\tau}},S)$ is aperiodic and minimal.
For all $n \geq 1$, $(X_{\boldsymbol{\tau}}^{(n)},S)$ is trivially aperiodic. 
Let us show that it is minimal.

Assume by contradiction that for some $n \geq 1$, $(X_{\boldsymbol{\tau}}^{(n)},S)$ is minimal, but not $(X_{\boldsymbol{\tau}}^{(n+1)},S)$.
There exist $u \in \mathcal{L}(X_{\boldsymbol{\tau}}^{(n+1)})$ and $x \in X_{\boldsymbol{\tau}}^{(n+1)}$ such that $u$ does not occur in $x$.
By Theorem~\ref{theo:BSTY}, $\{\tau_n([v]) \mid v \in \La(X_{\boldsymbol{\tau}}^{(n+1)}) \cap \A^{|u|}\}$ is a finite clopen  partition of 
$\tau_n(X_{\boldsymbol{\tau}}^{(n+1)})$.
Thus, considering $y = \tau_n(x)$, by minimality of $(X_{\boldsymbol{\tau}}^{(n)},S)$, there exists $k \geq 0$ such that $S^k y$ is in $\tau_n([u])$.
Take $z \in [u]$ such that $S^ky = \tau_n(z)$.
Since $y$ is aperiodic and since we also have $S^k y = S^{k'} \tau_n(S^\ell x)$ for some $\ell \in \N$ and $0 \leq k' < |\tau_n(x_\ell)|$,
we  obtain  that $\tau_n(z) = S^{k'} \tau_n(S^\ell x)$ with $z \in [u]$, $S^\ell x \notin [u]$ and $0 \leq k' < |\tau_n(x_\ell)|$; this contradicts Theorem~\ref{theo:BSTY}.

We now show that $\lim_{n \to +\infty} \min_{a \in \A} |\tau_{[1,n)}(a)| = +\infty$.
We again proceed by contradiction, assuming that $(\min_{a \in \A} |\tau_{[1,n)}(a)|)_{n \geq 1}$ is bounded.
Then there exists $N > 0$ and a sequence $(a_n)_{n \geq N}$ of letters in $\A$ such that for all $n \geq N$, $\tau_n(a_{n+1}) = a_n$.
We claim that there are arbitrary long words of the form $a_N^k$ in $\La(X_{\boldsymbol{\tau}}^{(N)})$ which contradicts the fact that $(X_{\boldsymbol{\tau}}^{(N)},S)$ is minimal and aperiodic.
Since $\boldsymbol{\tau}$ is proper, for all $n \geq N$ and all $b \in \A$, $\tau_n(b)$ starts and ends with $a_n$.
As $\max_{a \in \A} |\tau_{[1,n)}(a)|$ goes to infinity, there exists a sequence $(b_n)_{n \geq N}$ of letters in $\A$ such that $|\tau_{[N,n)}(b_n)|$ goes to infinity and for all $n \geq N$, $b_n$ occurs in $\tau_n(b_{n+1})$.
This implies that there exists $M \geq N$ such that for all $n \geq M$, $b_n \neq a_n$ and, consequently, that $\tau_n(b_{n+1}) = a_n u_n$ for some word $u_n$ containing $b_n$.  
It is then easily seen that, for all $k \geq 1$, $a_M^k$ is a prefix of $\tau_{[M,M+k)}(b_{M+k})$, which proves the claim.

We finally show that $\boldsymbol{\tau}$ is primitive.
If not, there exist $N \geq 1$ and a sequence $(a_n)_{n \geq N}$ of letters in $\A$ such that for all $n > N$, $a_N$ does not occur in $\tau_{[N,n)}(a_n)$.
As $(|\tau_{[N,n)}(a_n)|)_n$ goes to infinity, this shows that there are arbitrarily long words in $\La(X_{\boldsymbol{\tau}}^{(N)})$ in which $a_N$ does not occur.
Since $\boldsymbol{\tau}$ is unimodular, there is also a sequence $(a'_n)_{n \geq N}$ of letters in $\A$ such that $a_N = a_N'$ and for all $n \geq N$, $a_n'$ occurs in $\tau_n(a_{n+1}')$.
Again using the fact that $|\tau_{[N,n)}(a'_n)|$ goes to infinity, this shows that $a_N$ belongs to $\La(X_{\boldsymbol{\tau}}^{(N)})$.
We conclude that $(X_{\boldsymbol{\tau}}^{(N)},S)$ is not minimal, a contradiction.
\end{proof}

%%%%%%%%%%%%%%%%%%%%%%%%%
\subsection{Dendric subshifts} \label{subsec:tree}
%%%%%%%%%%%%%%%%%%%%%%%%%
We  now  describe a   subclass of the    family of  primitive unimodular  proper   $\S$-adic  subshifts,  namely
the  class of  dendric subshifts, that encompasses Sturmian subshifts, Arnoux-Rauzy subshifts  (see  Section \ref{subsec:AR}), as well as subshifts generated by interval exchanges (see~\cite{BDFDLPRR:15}). The ternary  words   generated  by  the  Cassaigne-Selmer  multidimensional continued fraction algorithm  also provide dendric  subshifts~\cite{ArnouxLa:2018,CLL:17}. 

Dendric subshifts are  defined with respect to combinatorial properties of their language expressed in terms of extension graphs. 
We recall the   notion of  dendric  words and subshifts,  studied in  \cite{BDFDLPRR:15,BDFDLPRR:15bis,BFFLPR:2015,BDFDLPR:16,Rigidity}. 
Let $(X,S)$ be a minimal subshift defined on the alphabet $\A$.
For $w \in {\mathcal L}_X$, let
$$
 \begin{array}{lcl}
 L(w) = \{ a \in {\mathcal A} \mid aw \in   {\mathcal L}_X\},							& &	\ell(w) = \Card(L(w)),	\\
 R(w) = \{ a \in {\mathcal A} \mid wa \in {\mathcal L}_X\},							& &	r(w) = \Card(R(w)).
 %\\
% B(w) = \{ (a,b) \in {\mathcal A} \times {\mathcal A} \mid awb \in {\mathcal L}_X\},	& & b(w) = \Card(B(w)).
 \end{array}
$$
A word $w \in \La_X$ is said to be {\em right special} (resp. {\em left special}) if $r(w)\geq 2$ (resp. $\ell(w)\geq 2$). It is {\em bispecial} if it is both left and right special.

For a word $w \in \La(X)$, we consider the undirected bipartite graph $\E(w)$ called its \emph{extension graph} with respect to $X$ and defined as follows:
its set of vertices is the disjoint union of $L(w)$ and $R(w)$ and its edges are the pairs $(a,b) \in L(w) \times R(w)$ such that $awb \in \mathcal{L}(X)$.
For an illustration, see Example~\ref{ex:fibo} below. 
We then say that a subshift $X$ is a \emph{dendric subshift} if, for every word $w \in {\mathcal L}(X)$, the graph $\E(w)$ is a tree.
Note that the extension graph associated with every non-bispecial word is trivially a tree.
We will consider here only minimal dendric subshifts.

The factor complexity of a dendric subshift over a $d$-letter alphabet  is $(d-1)n + 1$ (see~\cite{BDFDLPR:16}),
 and on a two-letter alphabet, the  minimal dendric subshifts are the Sturmian subshifts.
 Thus minimal dendric subshift are aperiodic when $d$ is greater or equal to $2$.

\begin{example}\label{ex:fibo}
\rm
Let $\sigma$ be the Fibonacci substitution defined over the alphabet  $\{a,b\}$ by  $\sigma \colon a \mapsto ab, b \mapsto a$  and consider the  subshift generated by $\sigma$ (i.e.,  the set of  bi-infinite words   over $\A$   whose   factors 
belong to some $\sigma^n (a)$).
The extension graphs of the empty word and of the two letters $a$ and $b$ are represented in Figure~\ref{fig:fibo-ext}.

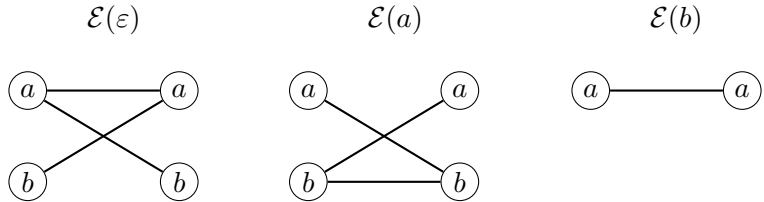
\begin{figure}[h]
 \tikzset{node/.style={circle,draw,minimum size=0.5cm,inner sep=0pt}}
 \tikzset{title/.style={minimum size=0.5cm,inner sep=0pt}}

 \begin{center}
  \begin{tikzpicture}
   \node[title](ee) {$\E(\varepsilon)$};
   \node[node](eal) [below left= 0.5cm and 0.6cm of ee] {$a$};
   \node[node](ebl) [below= 0.7cm of eal] {$b$};
   \node[node](ear) [right= 1.5cm of eal] {$a$};
   \node[node](ebr) [below= 0.7cm of ear] {$b$};
   \path[draw,thick]
    (eal) edge node {} (ear)
    (eal) edge node {} (ebr)
    (ebl) edge node {} (ear);
   \node[title](ea) [right = 3cm of ee] {$\E(a)$};
   \node[node](aal) [below left= 0.5cm and 0.6cm of ea] {$a$};
   \node[node](abl) [below= 0.7cm of aal] {$b$};
   \node[node](aar) [right= 1.5cm of aal] {$a$};
   \node[node](abr) [below= 0.7cm of aar] {$b$};
   \path[draw,thick]
    (aal) edge node {} (abr)
    (abl) edge node {} (aar)
    (abl) edge node {} (abr);
   \node[title](eb) [right = 3cm of ea] {$\E(b)$};
   \node[node](bal) [below left= 0.5cm and 0.6cm of eb] {$a$};
   \node[node](bar) [right= 1.5cm of bal] {$a$};
   \path[draw,thick]
    (bal) edge node {} (bar);
  \end{tikzpicture}
 \end{center}

 \caption{The extension graphs of $\varepsilon$ (on the left), $a$ (on the center) and $b$ (on the right) are trees.}
 \label{fig:fibo-ext}
\end{figure}
\end{example}

The following theorem states a structural property
  for return words of  minimal  dendric subshifts, from which  a description  as primitive unimodular proper $\S$-adic subshifts   can be  deduced (Proposition~\ref{prop:DendricareSadic} below).
Let $(X,S)$ be a minimal subshift over the alphabet $\A$ and let $w \in \La(X)$.
A {\em return word} to $w$ is a word $v$ in $\La(X)$ such that $w$ is a prefix of $vw$ and $vw$ contains exactly two occurrences of $w$.
We recall below a corollary of  \cite[Theorem 4.5]{BDFDLPR:16}.

\begin{theorem}\label{theo:return}
Let $(X,S)$ be a minimal dendric subshift defined on the alphabet $\A$. Then, for any $w \in \La(X)$, the set of return words to $w$ is a basis of the free group  on $\A$.
\end{theorem}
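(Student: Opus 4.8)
The plan is to recognise the statement as the Return Theorem for recurrent tree sets and to verify that its hypotheses are met, rather than to reprove that theorem from scratch. First I would record the dictionary: the language $\La(X)$ of a minimal dendric subshift is \emph{uniformly recurrent} (because $(X,S)$ is minimal) and every extension graph $\E(w)$, $w \in \La(X)$, is a tree (by the very definition of dendric); moreover $\A \subseteq \La(X)$ since all letters occur. Thus $\La(X)$ is exactly a uniformly recurrent tree set containing the alphabet, which is the setting of \cite[Theorem 4.5]{BDFDLPR:16}, and the conclusion for $\mathcal{R}_X(w)$ can be read off from it. The remaining work is to explain why the tree hypothesis forces the return words to be a basis of $\FA$, which I would organise into two independent steps: a counting step and a group-theoretic step.

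For the counting step I would first observe that a tree set is in particular \emph{neutral}: for each $w$, since $\E(w)$ is a tree on the vertex set $L(w)\sqcup R(w)$, its number of edges — that is, the number of bi-extensions $awb \in \La(X)$ — equals $\ell(w) + r(w) - 1$. Summing this relation over words of a given length makes the second difference of the complexity vanish and yields the affine factor complexity $p_X(n) = (d-1)n + 1$; the return-word cardinality theorem for uniformly recurrent neutral sets then gives $\Card(\mathcal{R}_X(w)) = d$ for every $w$. The point to stress is that neutrality alone, i.e. the numerical balance $\#\text{edges} = \#\text{vertices} - 1$, already suffices for this count.

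For the group-theoretic step I would use that a free group of rank $d$ cannot be generated by fewer than $d$ elements and is Hopfian, so that any generating set of cardinality $d$ is automatically a basis. Hence, once $\Card(\mathcal{R}_X(w)) = d$ is known, it remains only to prove that $\mathcal{R}_X(w)$ generates $\FA$, equivalently that the subgroup $\langle \mathcal{R}_X(w)\rangle \le \FA$ contains every letter of $\A$. This is where acyclicity \emph{and} connectedness of the extension graphs are used in full: following the tree-set machinery, one expresses the letters in terms of the return words via the bifix-code / finite-index argument underlying Theorem~4.5 (intuitively, by an induction along the bispecial factors, each tree $\E(v)$ supplying the relations that bring a new letter inside the subgroup already generated).

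The main obstacle is precisely this generation step. Getting the cardinality right needs only the numerical balance of a tree, which is shared by all neutral sets; but there exist neutral — even linearly recurrent — sets whose return words fail to form a basis, so no counting argument can suffice on its own. What makes the return words generate the whole of $\FA$, and hence, together with the cardinality count, form a basis, is the global acyclic-and-connected structure of all the extension graphs at once, which is exactly the input imported from \cite[Theorem 4.5]{BDFDLPR:16}. I would therefore present the proof as the verification of the tree-set hypotheses combined with this citation, flagging the generation of $\FA$ as the substantive ingredient.
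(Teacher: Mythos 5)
Your proposal is correct and matches the paper's treatment: the paper gives no independent proof of this statement but simply recalls it as a corollary of \cite[Theorem 4.5]{BDFDLPR:16}, exactly the reduction you make after verifying that minimality gives uniform recurrence and that the dendric condition is the tree-set hypothesis. Your additional decomposition into the neutrality counting step and the Hopfian-plus-generation step is a faithful sketch of what lies inside the cited theorem, and you correctly flag generation (not cardinality) as the part that genuinely needs the tree structure.
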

 In particular,  dendric subshifts have   bounded topological rank.
 The next result shows that minimal dendric subshifts are primitive unimodular proper $\S$-adic subshifts.
 Similar results are proved with the same method in~\cite{BFFLPR:2015,Rigidity,BSTY} but not highlighting all  the properties  stated below, so we provide a proof for the sake of self-containedness. 
 It relies on $\mathcal{S}$-adic representations built from return words \cite{Durand:2000,Durand:03} together with the remarkable property of return words of dendric subshifts stated in Theorem \ref{theo:return}.
 We  also provide in 
 Section \ref{subsection:vs}  an example of a primitive unimodular proper subshift which is not dendric and whose strong orbit equivalence class contains no dendric subshift.

\begin{proposition}\label{prop:DendricareSadic}
Minimal dendric subshifts are primitive unimodular proper $\S$-adic subshifts. 
\end{proposition}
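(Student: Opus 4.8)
The plan is to realize $(X,S)$ via the classical derivation-by-return-words construction of \cite{Durand:2000,Durand:03}, extracting properness and unimodularity from the dendric structure through Theorem~\ref{theo:return}. Fix a nested sequence of admissible prefixes, i.e.\ words $w_0, w_1, w_2, \dots \in \La(X)$ with each $w_n$ a prefix of $w_{n+1}$ and $|w_n| \to \infty$. For each $n$, let $\RR_n$ be the set of return words to $w_n$. By Theorem~\ref{theo:return} every $\RR_n$ is a basis of the free group $\FA$; in particular $\Card(\RR_n) = d$ for all $n$, which lets me identify every $\RR_n$ with one fixed $d$-letter alphabet and so meet the requirement $\A_{n+1} = \A_n$ in the definition of unimodularity. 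Since $w_n$ is a prefix of $w_{n+1}$, each return word to $w_{n+1}$ factorizes as a concatenation of return words to $w_n$; encoding this factorization defines morphisms $\tau_n : \A_{n+1}^* \to \A_n^*$, and I set $\boldsymbol{\tau} = (\tau_n)_{n \geq 1}$.

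Next I would verify that $\boldsymbol{\tau}$ is primitive, left proper, and generates $(X,S)$. That $X_{\boldsymbol{\tau}} = X$ and that $\boldsymbol{\tau}$ is primitive is exactly the content of the return-word $\S$-adic representation of minimal subshifts \cite{Durand:2000,Durand:03}, primitivity coming from minimality (every return word recurs within a bounded range). Left properness is where the return-word convention helps: by definition $w_n$ is a prefix of $v w_n$ for every return word $v$, so each $v$ begins with the first letter of $w_n$, i.e.\ all return words to $w_n$ share their initial letter. In the factorization of any return word to $w_{n+1}$, the first return word to $w_n$ that appears is then the one determined by the common prefix $w_{n+1}$ (choosing each $w_{n+1}$ long enough to contain this first return word, which is harmless); hence all images $\tau_n(a)$ begin with the same letter of $\A_n$, and $\tau_n$ is left proper.

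The specifically dendric input is unimodularity. For each $n$, both $\RR_n$ and $\RR_{n+1}$ are bases of $\FA$ by Theorem~\ref{theo:return}. Reading a return word to $w_{n+1}$ first as an actual word over $\A$ and then through its factorization into return words to $w_n$ shows that the inclusion $\RR_{n+1} \hookrightarrow \FA$ equals $\tau_n$ followed by the inclusion $\RR_n \hookrightarrow \FA$. Viewing $\FA$ as freely generated by $\RR_n$, this exhibits $\tau_n$ as an endomorphism sending the basis $\RR_{n+1}$ onto the basis $\RR_{n+1}$ expressed in the letters $\RR_n$; it is therefore an automorphism of $\FA$. Its abelianization is $M_{\tau_n}$, which thus lies in $\mathrm{GL}_d(\Z)$ and satisfies $|\det M_{\tau_n}| = 1$, so $\boldsymbol{\tau}$ is unimodular.

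Having produced a primitive, left proper, unimodular directive sequence generating $(X,S)$, I would finish by invoking Lemma~\ref{lemma:proper}, which replaces $\boldsymbol{\tau}$ by a primitive, proper, unimodular directive sequence $\tilde{\boldsymbol{\tau}}$ generating the same subshift. I expect the main obstacle to be the bookkeeping of the return-word derivation — verifying that the $\tau_n$ compose to a genuine representation of $X$ and that the $w_n$ can be chosen so that left properness holds at every level — but this is the standard machinery of \cite{Durand:2000,Durand:03}; the only genuinely dendric ingredients, namely the constant alphabet size and unimodularity, follow at once from Theorem~\ref{theo:return}.
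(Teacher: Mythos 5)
Your proposal is correct and follows essentially the same route as the paper's own proof: return words to a nested sequence of prefixes, Theorem~\ref{theo:return} yielding both the constant alphabet size $d$ and unimodularity (via bases of the free group and abelianization), left properness from choosing each prefix long enough to contain the common first return word, primitivity from minimality/uniform recurrence, and finally Lemma~\ref{lemma:proper} to upgrade left proper to proper. The only differences are presentational: the paper fixes a point $x \in X$ and an explicit subsequence $(n_i)$ of prefix lengths making each incidence matrix positive, where you defer that bookkeeping to the standard machinery of \cite{Durand:2000,Durand:03}, exactly as the paper itself acknowledges.
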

\begin{proof}
Let $(X,S)$ be a minimal dendric subshift over the alphabet ${\mathcal A}= \{1,2, \dots,d\}$  and take any $x\in X$. For every $n\geq 1$, let  $V_n(x):=\{v_{1,n},\cdots,v_{d,n}\}$ be the set of return words to $x_{[0,n)}$ and $V_0 (x) = {\mathcal A}$.
We stress the fact that $V_n(x)$ has cardinality $d$  for all $n$, according to Theorem~\ref{theo:return}.  
Let $(n_i)_{i \geq 1}$ be a strictly increasing integer sequence such that $n_1 = 1$ and such that each $v_{j,n_i}x_{[0,n_i)}$ occurs in $x_{[0,n_{i+1})}$ and in each $v_{k,n_{i+1}}$.
Let $\theta_i $ be an  endomorphism of ${\mathcal A}^*$ such that  $\theta_i ({\mathcal A}) = V_{n_i} (x)$.
Since $x_{[0,n_i)}$ is a prefix of $x_{[0,n_{i+1})}$, any $v_{j,n_{i+1}}\in V_{n_{i+1}}(x)$ has a unique decomposition as a concatenation of elements $v_{k,n_i}\in V_{n_i}(x)$.
More precisely, for any $v_{j,n_{i+1}}\in V_{n_{i+1}}(x)$, there is a unique sequence $(v_{k_j(1),n_i},v_{k_j(2),n_i},\dots,v_{k_j(\ell_j),n_i})$ of elements of $V_{n_i}(x)$ such that $v_{k_j(1),n_i} \cdots v_{k_j(\ell_j),n_i} = v_{j,n_{i+1}}$ and for all $m \in \{1,\dots,\ell_j\}$, $v_{k_j(1),n_i} \cdots v_{k_j(m),n_i} x_{[0,n_i)}$ is a prefix of $v_{j,n_{i+1}} x_{[0,n_{i+1})}$. 
This induces a unique endomorphism $\lambda_i$ of ${\mathcal A}^*$ defined by $\theta_{i+1} = \theta_i \circ \lambda_i$.
From the choice of the sequence $(n_i)_{i\geq 1}$, the matrices $M_{\lambda_i}$ have positive coefficients, so the sequence of morphisms $(\lambda_i)_{i \geq 1}$ is primitive.
Furthermore, as $x_{[0,n_{i+1}]}$ is prefix of each $v_{j,n_{i+1}}x_{[0,n_{i+1)}}$, there exists some $v \in V_{n_i}(x)$ such that $v_{k_j(1)}=v$ for all $j$.
In other words, the morphisms $\lambda_i$ are left proper.
Finally, from Theorem~\ref{theo:return}, the matrices $M_{\lambda_i}$ are unimodular.
Hence $(X,S)$ is $\S$-adic generated by the primitive directive sequence of unimodular left proper endomorphisms $\boldsymbol{\lambda} = (\lambda_i)_{i\geq 1}$.
We deduce from Lemma~\ref{lemma:proper} that minimal dendric subshifts are primitive unimodular proper $\S$-adic subshifts.
\end{proof}

Observe that using Lemma~\ref{lemma:aperiodicity} we recover that minimal dendric subshifts on at least two letters are aperiodic.

\section{Dimension groups of primitive unimodular proper $\S$-adic subshifts} \label{sec:proofs2}

In this section we first  prove a key result of this paper, namely Theorem \ref{theo:cohoword},  which   states that  $H(X,T)= C (X, \mathbb{Z})/ \beta C (X, \mathbb{Z})$ is generated as an additive group  by the classes  of the   characteristic functions of letter cylinders. 
We then  deduce a simple expression for  the dimension group of primitive unimodular proper $\S$-adic subshifts.

\subsection{From letters  to  factors}
We recall  that $\chi_U$ stands for the characteristic function of the set $U$.

\begin{theorem}
\label{theo:cohoword}
Let $(X,S)$ be a primitive unimodular proper $\S$-adic subshift.
Any function  $f\in C (X, \mathbb{Z})$ is cohomologuous to some integer linear combination of  the form  $\sum_{a\in \A} \alpha_a \chi_{[a]}\in C (X,\mathbb{Z})$. Moreover, the classes  $[\chi_{[a]}]$, $a\in \A$, are $\mathbb{Q}$-independent.
\end{theorem}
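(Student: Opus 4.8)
The plan is to realize $H(X,S)=C(X,\mathbb{Z})/\beta C(X,\mathbb{Z})$ through the sequence of Kakutani--Rohlin tower partitions attached to $\boldsymbol{\tau}$ and to track how the letter cylinders sit inside them. Since $\boldsymbol{\tau}$ is unimodular, each $\tau_{[1,n]}$ has unimodular incidence matrix, so Theorem~\ref{theo:BSTY} guarantees that
$$\mathcal{P}_n=\{\,S^kB_n(a)\mid a\in\A,\ 0\le k<h_n(a)\,\},\qquad B_n(a):=\tau_{[1,n]}([a]),\ h_n(a):=|\tau_{[1,n]}(a)|,$$
is a clopen partition of $X$ (note $B_0(a)=[a]$). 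Properness makes the bases $B_n=\bigsqcup_aB_n(a)$ nested, and primitivity forces $h_n(a)\to\infty$, so the partitions $\mathcal{P}_n$ generate the topology and the subgroups $C_n\subset C(X,\mathbb{Z})$ of functions constant on the atoms of $\mathcal{P}_n$ exhaust $C(X,\mathbb{Z})$; hence $H=\bigcup_n\pi(C_n)$. The elementary but crucial observation is that $\chi_{S^jB_n(a)}\circ S=\chi_{S^{j-1}B_n(a)}$, so $[\chi_{S^jB_n(a)}]=[\chi_{B_n(a)}]$ for every level $j$. Writing any $f\in C_n$ as $\sum_{a}\sum_j f_j(a)\chi_{S^jB_n(a)}$ then gives $[f]=\sum_a\big(\sum_jf_j(a)\big)[\chi_{B_n(a)}]$, so $\pi(C_n)$ is generated by the $d$ classes $[\chi_{B_n(a)}]$, $a\in\A$.

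Next I would compute the transition between consecutive levels. Because $\mathcal{P}_n$ refines $\mathcal{P}_{n-1}$, the base $B_{n-1}(c)$ decomposes into exactly those base-levels $S^{p}B_n(b)$ of the level-$n$ towers that correspond to an occurrence of the letter $c$ in $\tau_n(b)$; taking characteristic functions and using $[\chi_{S^pB_n(b)}]=[\chi_{B_n(b)}]$ yields
$$[\chi_{B_{n-1}(c)}]=\sum_{b\in\A}|\tau_n(b)|_c\,[\chi_{B_n(b)}]=\sum_{b\in\A}(M_{\tau_n})_{c,b}\,[\chi_{B_n(b)}].$$
In vector form this is $\mathbf{u}_{n-1}=M_{\tau_n}\mathbf{u}_n$ with $\mathbf{u}_n=([\chi_{B_n(a)}])_{a\in\A}$, and the case $n=1$ reads $[\chi_{[c]}]=\sum_b(M_{\tau_1})_{c,b}[\chi_{B_1(b)}]$ since $\mathbf{u}_0=([\chi_{[a]}])_a$. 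As $M_{\tau_n}\in GL_d(\mathbb{Z})$ by unimodularity, its inverse is again integral, so $\mathbf{u}_n$ and $\mathbf{u}_{n-1}$ are integer combinations of one another; thus the subgroups $\langle\mathbf{u}_n\rangle=\pi(C_n)$ all coincide with $\pi(C_0)=\langle[\chi_{[a]}]:a\in\A\rangle$. Combined with $H=\bigcup_n\pi(C_n)$ this gives $H=\langle[\chi_{[a]}]:a\in\A\rangle_{\mathbb{Z}}$, which is precisely the first assertion: every $[f]$ equals $\sum_a\alpha_a[\chi_{[a]}]$ for integers $\alpha_a$, i.e. $f$ is cohomologous to $\sum_a\alpha_a\chi_{[a]}$.

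For the $\mathbb{Q}$-independence I would use that the same Kakutani--Rohlin data exhibit $H$ as the direct limit $\varinjlim(\mathbb{Z}^{\A},M_{\tau_n})$; since all the connecting maps are unimodular isomorphisms, the limit is free abelian of rank exactly $d$. A rank-$d$ abelian group generated by $d$ elements has those elements forming a $\mathbb{Q}$-basis of $H\otimes\mathbb{Q}\cong\mathbb{Q}^{d}$, a spanning family of cardinality equal to the dimension being automatically independent; applied to the generators $[\chi_{[a]}]$ this gives their $\mathbb{Q}$-independence. It is worth stressing that integration against invariant measures, $\tau_\mu([\chi_{[a]}])=\mu([a])$, cannot establish this on its own, since it only detects $H/\mathrm{Inf}(K^0(X,S))$ and the simplex of letter measures may have affine dimension strictly less than $d-1$; the rank must come from the group structure of the unimodular limit, not from the traces.

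The step I expect to be the main obstacle is the rigorous justification of the Kakutani--Rohlin framework: that recognizability indeed makes each $\mathcal{P}_n$ a genuine clopen partition, that properness and primitivity give the nesting and the separation of points yielding $\bigcup_nC_n=C(X,\mathbb{Z})$, and above all the identification of $H(X,S)$ with the Bratteli--Vershik direct limit used for the rank count. The combinatorial decomposition of $B_{n-1}(c)$ into level-$n$ base-levels, which underlies the incidence-matrix relation, also requires careful bookkeeping of the occurrences of $c$ in $\tau_n(b)$ via the recognizability of $\tau_n$.
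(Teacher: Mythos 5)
Your treatment of the spanning statement is correct and is essentially the paper's own argument: the clopen Kakutani--Rohlin partitions $\mathcal{P}_n$ obtained from recognizability (Theorem~\ref{theo:BSTY}) together with minimality and aperiodicity, the fact that properness makes them generate the topology, the identity $[\chi_{S^jB_n(a)}]=[\chi_{B_n(a)}]$, and the incidence-matrix relation inverted using unimodularity to come back to the letter classes. The only cosmetic difference is that you pass from level $n-1$ to level $n$ with $M_{\tau_n}$, while the paper works directly with the composed matrices $M_{\tau_{[1,n]}}$; this changes nothing.

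The $\mathbb{Q}$-independence, however, is where your proposal has a genuine gap, and you flag it yourself. What your argument actually produces is a compatible family of surjections $\mathbb{Z}^{\A}\to\pi(C_n)$, $e_a\mapsto[\chi_{B_n(a)}]$, hence a surjective homomorphism from $\varinjlim(\mathbb{Z}^{\A},M_{\tau_n}^{\rm T})$ onto $H(X,S)$; it does not produce injectivity. Since the connecting maps are isomorphisms, injectivity of that surjection is equivalent to injectivity of the stage-$0$ map $e_a\mapsto[\chi_{[a]}]$, which is exactly the independence you want to prove. So ``the identification of $H(X,S)$ with the Bratteli--Vershik direct limit'' is not a tool you may invoke for the rank count: it \emph{is} the statement, and assuming it is circular (appealing instead to the general Bratteli--Vershik theory of \cite{HermanPS:92,DHS:99} would require building that model and verifying its hypotheses, i.e.\ reproving a lemma of the same nature). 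The paper closes this with a short telescoping argument, which is the one idea missing from your proposal: suppose $\sum_a \alpha_a\chi_{[a]}=f\circ S-f$ with $\alpha\in\mathbb{Z}^{\A}$ and $f\in C(X,\mathbb{Z})$; pick $n$ so that $f$ is constant on every atom of $\mathcal{P}_n$ (possible since $f$ is locally constant and the partitions generate the topology); for a letter $c$ and $x\in\tau_{[1,n+1]}([c])$, properness puts both $x$ and $S^{|\tau_{[1,n+1]}(c)|}x$ in the same base atom $\tau_{[1,n]}([a_{n+1}])$, where $a_{n+1}$ is the common first letter of the $\tau_{n+1}$-images; hence the telescoped sum $\sum_{j=0}^{|\tau_{[1,n+1]}(c)|-1}\alpha_{x_j}=f\bigl(S^{|\tau_{[1,n+1]}(c)|}x\bigr)-f(x)$ vanishes, i.e.\ $\bigl(\alpha^{\rm T} M_{\tau_{[1,n+1]}}\bigr)_c=0$ for every $c$, and unimodularity forces $\alpha=0$. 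Your side remark that traces cannot yield independence is correct (the letter-measure simplex may have low affine dimension), but it only explains why one strategy fails; it does not supply the argument that works.
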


\begin{proof}
Let $\boldsymbol{\tau} = (\tau_n:\A^* \to \A^*)_{n\geq 1}$ be a primitive unimodular proper  directive sequence of $(X,S)$, hence $X = X_{\boldsymbol{\tau}}$.
Using Proposition~\ref{prop:minimalSSIprimitif}, all subshifts $(X_{\boldsymbol{\tau}}^{(n)},S)$ are minimal and aperiodic and $\min_{a \in \A} |\tau_{[1,n)}(a)|$ goes to infinity when $n$ increases.

Let us show that the group $H(X,S )=C(X, {\mathbb Z})/ \beta C(X,{\mathbb Z}) $ is spanned by the set   of  classes  of characteristic functions of  letter
cylinders
$\{ [\chi_{[a]}] \mid a \in \A \}$.
From Theorem~\ref{theo:BSTY} and using the fact that $(X,S)$ is minimal and aperiodic, one has,   for all  positive  integer   $n$, that  
$$
\mathcal{P}_n = \{ S^k \tau_{[1,n]} ([a]) \mid 0\leq k < |\tau_{[1,n]} (a)| , a \in \A \}
$$
is a finite  partition of $X$ into clopen sets.  This provides a   family of nested Kakutani-Rohlin tower partitions.
The morphisms of the directive sequence $\boldsymbol{\tau}$ being proper, for all $n$, there are letters $a_n, b_n$ such that all images $\tau_n (c)$, $c \in \A$, start with $a_n$ and end with $b_n$.
From this, it is classical to check that $(\mathcal{P}_n)_n$ generates the topology of $X$ (the proof is the same as~\cite[Proposition 14]{DHS:99} that  is concerned with  the particular case $\tau_{n+1}=\tau_n$ for all $n$).

We first claim that  $H(X,S ) $ is spanned by the set of classes     $\cup_n \Omega_n $, where
$$\Omega_n =  \{ [\chi_{\tau_{[1,n]} ([a])}] \mid a \in \A \} \quad n \geq 1.$$
In other words,  $H(X,S)$ is spanned  by the set of  classes  of  characteristic functions of  bases   of the  sequence of   partitions  $({\mathcal P})_n$.
It suffices to check that, for all $u^- u^+\in \La (X)$, the class $[\chi_{[u^-.u^+]}]$ is a linear integer combination of elements belonging
 to some $\Omega_n$.

Let  us check this assertion. Let $u^-u^+$ belong to $\La (X)$. 
Since $\min_{a \in \A} |\tau_{[1,n)}(a)|$ goes to infinity, there exists $n$ such that  $|u^-|,|u^+| < \min_{a\in \A} |\tau_{[1,n)} (a) |$.
The directive sequence $ \boldsymbol{\tau}$ being proper, there exist words $w,w'$ with  respective  lengths $|w|=|u^-|$ and $|w'|=|u^+|$
 such that all images $\tau_{[1,n]} (a)$ start with $w$ and end with $w'$.

Let $x \in [u^-. u^+]$. 
Let $a \in \A$ and $k \in \mathbb{N}$, $0\leq k < |\tau_{[1,n]} (a)|$, such that $x$ belongs to the atom $S^k \tau_{[1,n]} ([a])$.
Observing that $\tau_{[1,n]} ([a])$ is included in $[w'.\tau_{[1,n]} (a)w]$, this implies that the full  atom $S^k \tau_{[1,n]} ([a])$ is included in $[u^-. u^+]$.
Consequently $[u^-. u^+]$ is a finite union of atoms in $\mathcal{P}_n$.
But each characteristic function  of   an atom  of the form   $ S^k \tau_{[1,n]} ([a]) $   is   cohomologous to    $\chi_{\tau_{[1,n]} ([a])}$. The proof  works   as in the proof of Proposition \ref{prop:cob}.  This thus    proves the claim. 

Now we claim that each element of $\Omega_n $ is a linear integer combination of elements in $\{ [\chi_{[a]}] \mid a \in \A \}$.
More precisely,  let us  show that $\chi_{\tau_{[1,n]}([b])}$ is cohomologous to 
$$
\sum_{a\in \A}  (M_{\tau_{[1,n]}}^{-1})_{b,a}\chi_{[a]} .
$$

Let $a\in \A$ and $n\geq 1$.
One has $[a] = \cup_{B \in \mathcal{P}_n } (B \cap [a]) $ and thus $\chi_{[a]} $ is cohomologous to the map 
$$
\sum_{b\in \A}  (M_{\tau_{[1,n]}})_{a,b}\chi_{\tau_{[1,n]}([b])} ,
$$
by using  the fact  that the  maps $  \chi_{S^k \tau_{[1,n]} ([a]) }$   are   cohomologous to    $\chi_{\tau_{[1,n]} ([a])}$.
This means that for $U = ([\chi_{[a]} ] ) _{a\in \A} \in H (X,S)^\A$ and $V = ([\chi_{\tau_{[1,n]}([a])}])_{a\in \A} \in H (X,S)^\A$, one has 
$$
U = M_{\tau_{[1,n]}}V 
$$
and as a consequence $V = M_{\tau_{[1,n]}}^{-1}U$.
This proves the claim and the first part of the theorem.
 
To show the independence, suppose that  there exists some row vector $\alpha = (\alpha_a )_{a\in \mathcal{A}} \in \mathbb{Z}^\mathcal{A}$ such that $\sum_a \alpha_a   [\chi_{[a]}] = 0$.
Hence there is some $f\in C (X, \mathbb{Z} )$ such that $\sum_a \alpha_a   \chi_{[a]} = f\circ S - f$.   
We now  fix some $n$ for which  $f$ is constant on each atom of $\mathcal{P}_n$.
Observe that for all $x\in X$ and all $k \in \N$, one has $f (S^k x ) - f(x) = \sum_{j=0}^{k-1} \alpha_{x_j}$. 
Let $c\in \mathcal{A}$ and $x\in \tau_{[1,n+1]} ([c])$.
Then, $x$ and $S^{ |\tau_{[1,n+1]} (c)|} (x)$ belong to  $\tau_{[1,n]} ([a_{n+1}])$.
Hence, $f (S^{ |\tau_{[1,n+1]} (c)|} x ) - f(x) = 0 $, and thus
$$
(\alpha M_{\tau_{[1,n]}})_c  = \sum_{j=0}^{|\tau_{[1,n+1]} (c)|-1} \alpha_{x_j} = 0 .
$$ 
This holds for all $c$, hence $\alpha M_{\tau_{[1,n]}} = 0$, which yields $\alpha = 0$, by invertibility of the matrix $ M_{\tau_{[1,n]}}$.
\end{proof}

Observe that in the previous result, we can relax the assumption of minimality. Indeed, 
one checks that the same proof works if we assume that $(X,S)$ is  aperiodic  (recognizability then holds by \cite{BSTY})
 and that  $\min_{a \in \A} |\tau_{[1,n)}(a)|$ goes to infinity.

We now derive  two corollaries  from Theorem~\ref{theo:cohoword}   dealing respectively with   invariant measures  and with the image  subgroup.
Note that Corollary \ref{coro:measures}   extends a statement initially proved for interval exchanges \cite{FerZam:08}.
See also \cite{BHL:2019} for a similar result in the framework of  automorphisms of the free group and \cite{BHL:2020} for subshifts with finite rank.
\begin{corollary}\label{coro:measures}
Let $(X,S)$ be a primitive unimodular proper $\S$-adic subshift over the alphabet $\A$ and let $\mu, \mu' \in \mathcal M (X,S)$.
If $\mu$ and $\mu'$ coincide on  the  letters, then they are equal, 
that is, if $\mu([a]) = \mu'([a])$ for all a in $\A$, then $\mu(U) = \mu'(U)$, for any clopen subset $U$ of $X$.
\end{corollary}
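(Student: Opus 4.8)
The plan is to leverage Theorem~\ref{theo:cohoword} directly, since it does essentially all the work. That theorem tells us that any $f \in C(X,\mathbb{Z})$ is cohomologous to a function of the form $\sum_{a \in \A} \alpha_a \chi_{[a]}$. First I would recall that invariant measures annihilate coboundaries: if $f = g\circ S - g$ for some $g \in C(X,\mathbb{Z})$, then $\int f\, d\nu = \int g\circ S\, d\nu - \int g\, d\nu = 0$ for every $\nu \in \mathcal{M}(X,S)$, by $S$-invariance of $\nu$. Consequently, the integral $\int f\, d\nu$ depends only on the cohomology class $[f] \in H(X,S)$, and moreover $\int f\, d\nu = \sum_{a \in \A} \alpha_a\, \nu([a])$ whenever $f$ is cohomologous to $\sum_a \alpha_a \chi_{[a]}$.

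The argument then proceeds as follows. Suppose $\mu([a]) = \mu'([a])$ for all $a \in \A$. Let $U$ be an arbitrary clopen subset of $X$. Since $U$ is clopen, its characteristic function $\chi_U$ belongs to $C(X,\mathbb{Z})$. By Theorem~\ref{theo:cohoword}, there exist integers $(\alpha_a)_{a \in \A}$ such that $\chi_U$ is cohomologous to $\sum_{a \in \A} \alpha_a \chi_{[a]}$. Integrating against $\mu$ and using that coboundaries integrate to zero, I obtain
\[
\mu(U) = \int \chi_U\, d\mu = \sum_{a \in \A} \alpha_a\, \mu([a]).
\]
The identical computation for $\mu'$ yields $\mu'(U) = \sum_{a \in \A} \alpha_a\, \mu'([a])$. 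Since $\mu([a]) = \mu'([a])$ for every letter $a$, the two right-hand sides are equal, so $\mu(U) = \mu'(U)$.

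To conclude that $\mu = \mu'$ as measures, I would invoke the fact that the clopen sets generate the Borel $\sigma$-algebra of the Cantor space $X$ and form a generating algebra closed under finite intersections; two regular Borel probability measures agreeing on such a generating algebra are equal. Since $\mu$ and $\mu'$ agree on every clopen set, they agree on all Borel sets, giving $\mu = \mu'$. I do not anticipate a serious obstacle here, as the crux is entirely contained in Theorem~\ref{theo:cohoword}; the only point requiring mild care is the passage from agreement on clopen sets to agreement as measures, which is a standard measure-theoretic fact for Cantor systems. The essential conceptual content is that the letter cylinders span the dimension group's first cohomology, so invariant measures are pinned down by their values on letters.
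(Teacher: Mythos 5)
Your proposal is correct and follows exactly the route the paper intends: Corollary~\ref{coro:measures} is stated as an immediate consequence of Theorem~\ref{theo:cohoword}, using that invariant measures annihilate coboundaries, so any clopen set's measure is the corresponding integer combination $\sum_{a\in\A}\alpha_a\,\mu([a])$ of letter measures. Your final remark on passing from agreement on clopen sets to equality of Borel measures is a standard fact and is consistent with the paper's formulation, which itself only asserts agreement on clopen sets.
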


\begin{corollary}\label{coro:freq}
Let $(X,S)$ be a primitive  unimodular proper $\S$-adic subshift over  the alphabet ${\mathcal A}$.
The image subgroup of $(X,S)$  satisfies

\begin{align*}
%I(X,S) & = \bigcap_{\mu\in\mathcal{M}(X,S)}\left\lbrace\sum_{a\in\mathcal{A}}\mathbb{Z}\mu([a])\right\rbrace \\
I(X,S) & = \bigcap_{\mu\in\mathcal{M}(X,S)}  \langle {\mu([a]) : a\in A} \rangle \\
& =   \left\lbrace \alpha : \exists (\alpha_a)_{a\in \mathcal{A}} \in \mathbb{Z}^\mathcal{A} , \alpha 
 = \sum_{a\in\mathcal{A}} \alpha_a \mu([a]) \ \forall \mu \in \mathcal{M} (X,S) \right\rbrace .
\end{align*}
\end{corollary}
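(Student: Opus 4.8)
The plan is to combine the description of the image subgroup for minimal subshifts (Proposition \ref{prop:cob}) with the reduction from factors to letters provided by Theorem \ref{theo:cohoword}. The essential feature of Theorem \ref{theo:cohoword} that I will exploit is that the integer coefficients witnessing the cohomology live at the level of continuous functions in $C(X,\mathbb{Z})$, hence do not depend on the measure $\mu$; this uniformity is exactly what upgrades the intersection description into the uniform one.

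First I would establish the second equality, which is really a restatement of $I(X,S)$ itself. One inclusion is immediate: if $\alpha = \sum_{a\in\mathcal{A}} \alpha_a \mu([a])$ for all $\mu$ with a fixed integer vector $(\alpha_a)_{a\in\mathcal{A}}$, then $g = \sum_{a\in\mathcal{A}} \alpha_a \chi_{[a]}$ is a function in $C(X,\mathbb{Z})$ with $\int g\, d\mu = \alpha$ for every $\mu \in \mathcal{M}(X,S)$, so $\alpha \in I(X,S)$ by Proposition \ref{prop:GW}. Conversely, given $\alpha \in I(X,S)$, Proposition \ref{prop:GW} furnishes some $g \in C(X,\mathbb{Z})$ with $\alpha = \int g\,d\mu$ for all $\mu$; Theorem \ref{theo:cohoword} lets me write $g$ as cohomologous to $\sum_{a\in\mathcal{A}} \alpha_a \chi_{[a]}$ for a single integer vector $(\alpha_a)_{a\in\mathcal{A}}$, and since cohomologous functions have equal integrals against any invariant measure, $\alpha = \sum_{a\in\mathcal{A}} \alpha_a \mu([a])$ for all $\mu$. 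This proves that $I(X,S)$ equals the right-hand set.

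For the first equality I would use the factor description of Proposition \ref{prop:cob}, namely $I(X,S) = \bigcap_{\mu} \langle \{\mu([w]) : w \in \mathcal{L}(X)\}\rangle$, and show that for each fixed $\mu$ the group generated by all factor measures coincides with the one generated by letter measures. The inclusion $\langle \{\mu([a]) : a\in\mathcal{A}\}\rangle \subseteq \langle \{\mu([w]) : w\in\mathcal{L}(X)\}\rangle$ is trivial since letters are factors. For the reverse, Theorem \ref{theo:cohoword} gives, for each $w$, integers $\alpha_a^{(w)}$ with $\chi_{[w]}$ cohomologous to $\sum_{a\in\mathcal{A}} \alpha_a^{(w)}\chi_{[a]}$, whence $\mu([w]) = \sum_{a\in\mathcal{A}} \alpha_a^{(w)}\mu([a])$ lies in $\langle \{\mu([a]) : a\in\mathcal{A}\}\rangle$. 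Intersecting over $\mu$ then yields the first equality.

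The computations here are routine; I expect no serious obstacle, since all the hard work is already packaged in Theorem \ref{theo:cohoword}. The only point demanding care is to keep track of the fact that the integer vector $(\alpha_a)_{a\in\mathcal{A}}$ obtained from Theorem \ref{theo:cohoword} is independent of $\mu$ — this is precisely what makes the second description an honest strengthening of the first, in which a priori the coefficients could vary with $\mu$.
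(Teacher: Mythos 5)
Your proof is correct and follows essentially the same route the paper intends: it combines the description of $I(X,S)$ from Proposition~\ref{prop:GW} and the factor-based description from Proposition~\ref{prop:cob} with the reduction to letter cylinders given by Theorem~\ref{theo:cohoword}, which is exactly the paper's (one-line) argument fleshed out. Your emphasis on the measure-independence of the integer vector $(\alpha_a)_{a\in\mathcal{A}}$ is precisely the point that makes the uniform description valid.
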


The proof of Corollary \ref{coro:freq} uses the two descriptions of the image subgroup given in Proposition \ref{prop:GW} and Proposition \ref{prop:cob}.

In both corollaries,   the assumption of being proper can be dropped. The   proof  then  uses the measure-theoretical Bratteli-Vershik representation of the primitive unimodular $\S$-adic subshift given in \cite[Theorem 6.5]{BSTY}.

\subsection{An explicit description of the dimension group}
Theorem~\ref{theo:cohoword} allows  a precise description of the dimension group of primitive unimodular proper $\S$-adic subshifts.
Note that in the case of interval exchanges, one  recovers     the results   obtained in   \cite{Putnam:89}; see also \cite{Putnam:92,GjerdeJo:02}.

We  first  need  the following Gottschalk-Hedlund type   statement  \cite{GotHed:55}. 

\begin{lemma}[\cite{Host:95}, Lemma 2, \cite{DHP}, Theorem 4.2.3]
\label{lemma:positivecohomologous}
Let $(X,T)$ be a minimal Cantor system and let $f \in C(X,\Z)$.
There exists $g \in C(X,\N)$ that is cohomologous to $f$ if and only if for every $x \in X$, the sequence $\left( \sum_{k=0}^n f \circ T^k(x)\right)_{n \geq 0}$ is bounded from below.
\end{lemma}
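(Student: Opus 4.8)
The plan is to prove the two implications separately, the forward one being elementary and the backward one (constructing a non-negative cocycle) carrying all the difficulty. Throughout put $S_n(x)=\sum_{k=0}^{n}f\circ T^k(x)$ for $n\ge 0$ and $S_{-1}(x)=0$, so the hypothesis reads: $(S_n(x))_{n\ge 0}$ is bounded from below for every $x$. For the easy direction, suppose $g\in C(X,\N)$ is cohomologous to $f$, say $f=g+c\circ T-c$ with $c\in C(X,\Z)$. Then $S_n(x)=\sum_{k=0}^{n}g\circ T^k(x)+\bigl(c(T^{n+1}x)-c(x)\bigr)\ge -2\max_X|c|$, which is finite since $c$ is continuous on the compact space $X$; hence the sums are bounded from below (even uniformly in $x$).

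For the converse, I would set $h(x)=\sup_{n\ge -1}\bigl(-S_n(x)\bigr)=-\inf_{n\ge-1}S_n(x)$. Taking $n=-1$ shows $h\ge 0$, and $h(x)<\infty$ for every $x$ is exactly the hypothesis; moreover $h$ is integer-valued because the $S_n(x)$ are integers bounded below, so each infimum is attained. Using the identity $S_n(x)=f(x)+S_{n-1}(Tx)$ one checks the functional equation
\[
h(x)=\max\bigl(0,\,h(Tx)-f(x)\bigr).
\]
From this, setting $g=f-(h\circ T-h)=f+ (-h)\circ T-(-h)$, I get that $g$ is cohomologous to $f$ and integer-valued, and the equation gives $g\ge 0$ pointwise: on $\{h=0\}$ it yields $f(x)\ge h(Tx)$, i.e.\ $g(x)=f(x)-h(Tx)\ge 0$, while on $\{h>0\}$ one has $h(x)=h(Tx)-f(x)$, i.e.\ $g(x)=0$. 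Thus everything reduces to showing that $h$ (equivalently $g$) is \emph{continuous}, i.e.\ locally constant.

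The regularity of $h$ splits into boundedness and semicontinuity. First I would prove that $h$ is \emph{uniformly} bounded: being a supremum of continuous functions it is lower semicontinuous, and since $X=\bigcup_{C_0}\{h\le C_0\}$ is an increasing union of closed sets, the Baire category theorem produces a nonempty clopen $U$ on which $h\le C_0$. Minimality makes the return time to $U$ uniform: $\{T^{-i}U\}_{i\ge 0}$ is an open cover of $X$, so by compactness $X=\bigcup_{i=0}^{N}T^{-i}U$ for some $N$. Combining this with the estimate $h(x)\le h(Tx)+\max_X|f|$ (a direct consequence of the functional equation and $h\ge 0$), iterated up to the first visit of the orbit of $x$ to $U$, gives $h(x)\le C_0+N\max_X|f|$ for all $x$.

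The remaining and genuinely delicate point, which I expect to be the main obstacle, is the upper semicontinuity of $h$ (equivalently the lower semicontinuity of $m=-h$): lower semicontinuity of $h$ is automatic, but the infimum defining $m$ may a priori dip to its value only after arbitrarily long times, and these late dips could persist along a sequence $y_j\to x_0$ with $h(y_j)\ge h(x_0)+1$. This is precisely the Gottschalk--Hedlund phenomenon, where minimality must be used to upgrade pointwise control to uniform control. The plan here is to use the uniform bound together with the functional equation written in telescoped form $h=\max\!\bigl(h_N,\;h\circ T^{N}-S_{N-1}\bigr)$, where $h_N=\max_{0\le p\le N}(-S_{p-1})$ is continuous: choosing $N$ so that the first branch realizes $h(x_0)$, upper semicontinuity at $x_0$ transfers to $T^{N}x_0$, so that the set where $h$ fails to be upper semicontinuous is orbit-coherent; minimality and the uniform bound then force this set to be empty. (Equivalently, one passes to the homeomorphism $\widehat T(x,k)=(Tx,k+f(x))$ of $X\times\Z$, whose forward orbit of $(x,0)$ has second coordinate $S_{n-1}(x)$, and identifies $-h$ with the continuous lower envelope of the orbit closure, continuity being forced by minimality of the base.) This is exactly the content supplied by the cited results of Host and of \cite{DHP}, so at this step I would either carry out the semicontinuity argument above or invoke them. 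Once $h$ is locally constant, $g=f-h\circ T+h\in C(X,\N)$ is the desired function, completing the proof.
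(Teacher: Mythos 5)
The paper offers no proof of this lemma at all: it is stated as a quotation of \cite{Host:95} and \cite{DHP}, so there is no internal argument to compare yours against, and what you have written is in effect a self-contained reconstruction of the Gottschalk--Hedlund-type proof those references supply. Every step you carry out in full is correct: the easy direction; the transfer function $h(x)=\sup_{n\ge -1}(-S_n(x))$, finite by hypothesis and integer-valued because a set of integers bounded below has a least element; the functional equation $h=\max(0,\,h\circ T-f)$, whence $g=f+h-h\circ T\ge 0$ pointwise (and $g=0$ on $\{h>0\}$); and the uniform bound on $h$, where you correctly combine lower semicontinuity of $h$ (so $\{h\le C\}$ is closed), Baire category, a nonempty clopen $U$, compactness and minimality to bound entrance times into $U$, and the inequality $h\le h\circ T+\max_X|f|$. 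This is the standard route, and it does strictly more than the paper, which only cites.

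The crux --- upper semicontinuity of $h$ --- is the one step you merely sketch, and while your sketch is completable (so this is not a wrong approach), two of its ingredients are missing and are genuinely needed. First, for minimality to upgrade ``the failure set $D$ contains a dense forward tail'' to ``$D=X$'', you need $D$ to be closed: let $\phi$ be the upper semicontinuous envelope of $h$, which is finite and integer-valued by your uniform bound; then $\phi-h$ is upper semicontinuous, so $D=\{\phi\ge h+1\}$ is closed. Your propagation step then works as stated: for $x_0\in D$ and $N$ large enough that $h_N(x_0)=h(x_0)$, any $y$ close enough to $x_0$ (so that the locally constant functions $h_N$ and $S_{N-1}$ take their values at $x_0$) with $h(y)\ge h(x_0)+1$ must realize the second branch of $h=\max\bigl(h_N,\,h\circ T^N-S_{N-1}\bigr)$, giving $h(T^Ny)=h(y)+S_{N-1}(x_0)\ge h(T^Nx_0)+1$; hence $T^Nx_0\in D$ for all such $N$, and $D=X$ by minimality. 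Second, once $D=X$ you still need the closing contradiction, which ``the uniform bound'' alone does not deliver: take $x^*$ where the u.s.c.\ function $\phi$ attains its maximum $\Phi$; by integrality there are points $y$ arbitrarily close to $x^*$ with $h(y)=\Phi$, and $D=X$ then forces $\phi(y)\ge \Phi+1$, contradicting maximality. Hence $D=\emptyset$, $h$ is continuous (hence locally constant), and $g\in C(X,\N)$. I would also drop the parenthetical skew-product variant: $X\times\Z$ is not compact and the lower envelope of the orbit closure is not literally $-h$, so that phrasing needs its own verification. Since your declared fallback at this step is to invoke exactly the references the paper itself cites, the proposal is sound as submitted; with the two points above supplied, it becomes a complete proof.
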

We now  can deduce   one of  our  main statements.   
\begin{theorem}\label{theo:dg}
Let $(X,S)$ be a primitive unimodular proper $\S$-adic subshift over a $d$-letter alphabet. 
The linear map $\Phi : H (X,S) \to \mathbb{Z}^d$ defined by $\Phi (  [\chi_{[a]}] ) = e_a$, where $\{ e_a \mid a \in \mathcal{A} \}$ is the canonical base of $ \mathbb{Z}^d$,
defines an isomorphism of dimension groups from $K^{0}(X,S)$ onto
\begin{align}
\label{align:ZdGD}
\left( {\mathbb Z} ^d, \,  \{ {\bf x} \in {\mathbb Z}^d \mid \langle {\bf x},  \boldsymbol{ \mu} \rangle > 0 \mbox{ for all }  
 \mu \in  {\mathcal M} (X,S) \}\cup \{{\mathbf 0}\},\,  \bf{1}\right ),
 \end{align}
 where the entries of $\bf{1}$ are equal to $1$. 
\end{theorem}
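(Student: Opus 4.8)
The plan is to show that the map $\Phi$ is a well-defined group isomorphism that is an order isomorphism and preserves the order unit. Theorem~\ref{theo:cohoword} does almost all the work for the group-theoretic part, so the real content lies in identifying the positive cone. First I would observe that $\Phi$ is well-defined and bijective: by Theorem~\ref{theo:cohoword}, the classes $[\chi_{[a]}]$, $a \in \A$, generate $H(X,S)$ as an abelian group and are $\mathbb{Q}$-independent, hence form a $\mathbb{Z}$-basis of $H(X,S) \cong \mathbb{Z}^d$. Sending this basis to the canonical basis $\{e_a\}$ therefore yields a group isomorphism $\Phi \colon H(X,S) \to \mathbb{Z}^d$. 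Since $[1] = \sum_{a \in \A} [\chi_{[a]}]$ (because $\sum_a \chi_{[a]} = 1$ pointwise), we get $\Phi([1]) = \mathbf{1}$, so the order unit is preserved.

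The heart of the argument is to show $\Phi(H^+(X,S))$ equals the positive cone displayed in \eqref{align:ZdGD}. For one inclusion, take a class $[f] \in H^+(X,S)$, so $f$ is cohomologous to a function in $C(X,\N)$; then $\int f\,d\mu \geq 0$ for every $\mu \in \mathcal{M}(X,S)$. Writing $f$ up to a coboundary as $\sum_a \alpha_a \chi_{[a]}$ with $\Phi([f]) = \mathbf{x} = (\alpha_a)_a$, integrating against $\mu$ gives $\langle \mathbf{x}, \boldsymbol{\mu}\rangle = \int f\,d\mu \geq 0$, and by minimality this can vanish for some $\mu$ only when $[f] = 0$; I would invoke the strict positivity $\mu(E) > 0$ for nonempty clopen $E$ together with uniqueness to rule out $\mathbf{x} \neq \mathbf{0}$ with $\langle \mathbf{x}, \boldsymbol{\mu}\rangle = 0$. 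The reverse inclusion is where Lemma~\ref{lemma:positivecohomologous} enters: given $\mathbf{x}$ with $\langle \mathbf{x}, \boldsymbol{\mu}\rangle > 0$ for all $\mu$, I take $f = \sum_a \alpha_a \chi_{[a]}$ with $\alpha_a = x_a$ and must produce $g \in C(X,\N)$ cohomologous to $f$. By the lemma it suffices to show that the Birkhoff sums $\sum_{k=0}^{n} f \circ S^k(x)$ are bounded below uniformly in $x$. Here I would use that $f$ is a letter-cylinder function, so its Birkhoff sums count weighted letter occurrences, combined with the fact that $\langle \mathbf{x}, \boldsymbol{\mu}\rangle > 0$ for all invariant $\mu$ forces, via unique-ergodicity-type uniform convergence of ergodic averages (or a compactness argument over $\mathcal{M}(X,S)$), that these sums tend to $+\infty$ and in particular stay bounded below.

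The main obstacle I anticipate is the boundedness-below estimate needed to apply Lemma~\ref{lemma:positivecohomologous}. The condition $\langle \mathbf{x}, \boldsymbol{\mu}\rangle > 0$ for \emph{every} $\mu \in \mathcal{M}(X,S)$ is a statement about averages, and converting it into a genuine uniform lower bound on finite Birkhoff sums requires care when the system is not uniquely ergodic. The clean way is to argue by contradiction: if some Birkhoff sums of $f$ were unbounded below along a sequence $(x^{(j)}, n_j)$, one extracts a weak-$*$ limit of the empirical measures $\frac{1}{n_j}\sum_{k=0}^{n_j - 1} \delta_{S^k x^{(j)}}$, which is $S$-invariant, and obtains an invariant measure $\nu$ with $\langle \mathbf{x}, \boldsymbol{\nu}\rangle \leq 0$, contradicting the hypothesis. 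This gives the required lower bound and hence $[f] \in H^+(X,S)$.

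Finally, having established that $\Phi$ is a group isomorphism carrying $H^+(X,S)$ onto the cone in \eqref{align:ZdGD} and $[1]$ to $\mathbf{1}$, I would conclude that it is an isomorphism of ordered groups with order unit; since $K^0(X,S)$ is a dimension group and $\Phi$ is an order isomorphism, the target is automatically a dimension group as well, completing the proof.
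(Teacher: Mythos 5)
Your proposal is correct and follows essentially the same route as the paper's proof: both use Theorem~\ref{theo:cohoword} to get the group isomorphism and the order unit, the strict positivity of invariant measures on nonempty clopen sets for the forward inclusion of the cone, and Lemma~\ref{lemma:positivecohomologous} together with the weak-$*$ compactness argument on empirical measures (contradiction yielding an invariant measure $\nu$ with $\langle \mathbf{x}, \boldsymbol{\nu}\rangle \leq 0$) for the reverse inclusion. The only cosmetic difference is that you negate a uniform lower bound over sequences of points while the paper fixes a single point whose Birkhoff sums are unbounded below; both arguments are interchangeable here.
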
  
\begin{proof}
From Theorem \ref{theo:cohoword}, $\Phi$ is well defined and is a group isomorphism from $H (X,S)$ onto $\Z^d$.
We obviously have $\Phi (  [1] ) = \Phi(\sum_{a \in \A} [\chi_{[a]}]) = \bf{1}$ and it remains to show that 
\[
\Phi(H^+(X,S)) 
= 
\{ {\bf x} \in {\mathbb Z}^d \mid \langle {\bf x},  \boldsymbol{ \mu} \rangle > 0 \mbox{ for all }  
 \mu \in  {\mathcal M} (X,S) \}
 \cup 
 \{{\mathbf 0}\}.
\]
Any element of $H^+(X,S)$ is of the form $[f]$ for some $f \in C(X,\N)$.
From Theorem \ref{theo:cohoword}, there exists a unique vector ${\bf x} = (x_a)_{a \in \A}$ such that $[f] = \sum_{a \in \A} x_a [\chi_{[a]}]$.
As $f$ is non-negative, we have, for any $\mu \in  {\mathcal M} (X,S)$, 
\[
	\langle	\Phi([f]),\boldsymbol{\mu} \rangle
	=
	\sum_{a \in \A} x_a \mu([a])
	=
	\int f d\mu
	\geq 0,
\]
with equality if and only if $f=0$ (in which case ${\bf x} = {\bf 0}$).

For the other inclusion, assume that ${\bf x} = (x_a)_{a \in \A} \in {\mathbb Z}^d$ satisfies $\langle {\bf x},  \boldsymbol{ \mu} \rangle > 0 \mbox{ for all }  
 \mu \in  {\mathcal M} (X,S)$ (the case ${\bf x} = {\bf 0}$ is trivial).
We consider the function $f = \sum_{a \in \A} x_a \chi_{[a]}$. According to   Lemma~\ref{lemma:positivecohomologous},  the existence of  $f' \in [f]$ such that $f'$ is non-negative is  equivalent to  the   existence of   a   lower bound for  ergodic sums.
Assume by contradiction that there exists a  point $x \in X$ such that the sequence $\left( \sum_{k=0}^n f \circ S^k(x)\right)_{n \geq 0}$ is not bounded from below.
Thus there is a an increasing sequence of positive integers $(n_i)_{i \geq 0}$ such that 
\[
	\lim_{i \to +\infty} \sum_{k=0}^{n_i-1} f \circ S^k(x) = -\infty.
\]
Passing to a subsequence $(m_i)_{i \geq 0}$ of $(n_i)_{i \geq 0}$ if necessary, there exists $\mu \in \mathcal{M}(X,S)$ satisfying
\[
	\langle {\bf x}, \boldsymbol{\mu} \rangle = \int f d\mu = \lim_{i \to +\infty} \frac{1}{m_i} \sum_{k=0}^{m_i-1} f \circ S^k(x) \leq 0,
\]
which contradicts our hypothesis.
The sequence $\left( \sum_{k=0}^n f \circ S^k(x)\right)_{n \geq 0}$ is thus bounded from below and we conclude  by using Lemma~\ref{lemma:positivecohomologous}.
\end{proof}

\begin{remark}
We cannot remove the hypothesis of being left or right proper in Theorem \ref{theo:dg}.
Consider indeed  the subshift $(X,S)$ defined by the  primitive unimodular non-proper substitution $\tau$  defined over $\{a,b\}^*$ as  $\tau \colon   a   \mapsto aab,  b \mapsto ba$.
According to   \cite[p.114] {DurandThese},  the dimension group  of $(X,S)$  is  isomorphic to 
$\left( {\mathbb Z}^3 , \left\{ {\mathbf x} \in {\mathbb Z}^3  :    \langle {\mathbf x},  {\mathbf v}  \rangle  > 0 \right\} , (2,0,-1) \right)$
where ${\mathbf v}=  ( \frac{1+ \sqrt 5}{2}, 2,1)$.

\end{remark}

%%%%%%%%%%%%%%%%%%%%%%%%%%%%%%%%%%%%
\subsection{Ergodic measures}\label{subsection:ergodic}
We now focus on further   consequences of Theorem \ref{theo:cohoword}  for  invariant measures of  primitive unimodular proper $\S$-adic subshifts.

\begin{corollary}\label{cor:oe}
Two primitive unimodular proper $\S$-adic subshifts $(X_1,S)$ and $(X_2,S)$ are strong orbit equivalent 
if and only if
there is a unimodular matrix $M$ such that $M {\bf 1} = {\bf 1}$ and 
\[
	\{\boldsymbol{\nu} \mid \nu \in \mathcal M(X_2,S)\}
	=
	\{M^{\rm T} \boldsymbol{\mu} \mid \mu \in \mathcal M(X_1,S)\}.	
\]
In particular, $(X_1,S)$ and $(X_2,S)$ are defined on alphabets with the same cardinality.
\end{corollary}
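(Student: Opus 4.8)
The plan is to combine the Giordano--Putnam--Skau characterization of strong orbit equivalence (Theorem~\ref{oe}) with the explicit form of the dimension group obtained in Theorem~\ref{theo:dg}. Write $d_i$ for the cardinality of the alphabet of $(X_i,S)$ and let
$$
G_i = \left(\Z^{d_i},\, P_i,\, \mathbf{1}\right), \qquad P_i = \{\mathbf{x}\in\Z^{d_i} \mid \langle \mathbf{x},\boldsymbol{\mu}\rangle > 0 \ \forall \mu\in\mathcal{M}(X_i,S)\}\cup\{\mathbf{0}\},
$$
be the dimension group provided by Theorem~\ref{theo:dg}, so that $K^0(X_i,S)\cong G_i$. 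By Theorem~\ref{oe}, the subshifts are strong orbit equivalent if and only if $G_1$ and $G_2$ are isomorphic as ordered groups with order unit. The whole proof thus reduces to translating an isomorphism $G_1\cong G_2$ into the stated condition on the simplexes of letter measures.

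First I would record that any isomorphism of ordered groups with order unit $\phi\colon G_1\to G_2$ is in particular a group isomorphism $\Z^{d_1}\to\Z^{d_2}$, forcing $d_1=d_2=:d$ by invariance of the rank, which yields the last assertion of the statement. Such a $\phi$ is then given by a matrix $N\in GL_d(\Z)$, hence unimodular, and $\phi(\mathbf{1})=\mathbf{1}$ reads $N\mathbf{1}=\mathbf{1}$. To handle the positive cone I would use that $G_1,G_2$ are simple (being dimension groups of minimal Cantor systems), so that, as recalled after Proposition~\ref{prop:GW}, the order is recovered from the traces via $P_i = \{a : p(a)>0 \ \forall p\in\T(G_i)\}\cup\{\mathbf{0}\}$. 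Consequently $\phi(P_1)=P_2$ is equivalent to $\phi$ inducing, through $q\mapsto q\circ\phi$, a bijection from $\T(G_2)$ onto $\T(G_1)$. Now by Theorem~\ref{theo:dg} together with the Herman--Putnam--Skau identification $\mu\mapsto\tau_\mu$, the trace space $\T(G_i)$ is exactly $\{\langle\cdot,\boldsymbol{\mu}\rangle : \mu\in\mathcal{M}(X_i,S)\}$, and one computes $\langle\cdot,\boldsymbol{\nu}\rangle\circ\phi = \langle\cdot,N^{\mathrm T}\boldsymbol{\nu}\rangle$. Hence $\phi(P_1)=P_2$ amounts to $\{N^{\mathrm T}\boldsymbol{\nu}\mid\nu\in\mathcal{M}(X_2,S)\} = \{\boldsymbol{\mu}\mid\mu\in\mathcal{M}(X_1,S)\}$. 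Setting $M=N^{-1}$, which is unimodular and still satisfies $M\mathbf{1}=\mathbf{1}$, and noting $M^{\mathrm T}=(N^{\mathrm T})^{-1}$, this is precisely the displayed equality $\{\boldsymbol{\nu}\mid\nu\in\mathcal{M}(X_2,S)\} = \{M^{\mathrm T}\boldsymbol{\mu}\mid\mu\in\mathcal{M}(X_1,S)\}$.

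For the converse I would reverse this computation: given a unimodular $M$ with $M\mathbf{1}=\mathbf{1}$ and the displayed equality of simplexes, set $N=M^{-1}$ and define $\phi\colon G_1\to G_2$ by $\phi(\mathbf{x})=N\mathbf{x}$. Then $\phi$ is a group isomorphism with $\phi(\mathbf{1})=N\mathbf{1}=\mathbf{1}$, and the same identity $\langle\cdot,\boldsymbol{\nu}\rangle\circ\phi = \langle\cdot,N^{\mathrm T}\boldsymbol{\nu}\rangle$, combined with $N^{\mathrm T}=(M^{\mathrm T})^{-1}$, shows that $q\mapsto q\circ\phi$ carries $\T(G_2)$ onto $\T(G_1)$; by simplicity this gives $\phi(P_1)=P_2$. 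Thus $\phi$ is an isomorphism of ordered groups with order unit, whence $K^0(X_1,S)\cong K^0(X_2,S)$, and Theorem~\ref{oe} yields strong orbit equivalence.

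The main delicate point is the passage from the order-isomorphism condition $\phi(P_1)=P_2$ to the equality of letter-measure simplexes; this is exactly where I rely on the simplicity of the dimension groups, so that the traces determine the cone, and on the two identifications---Theorem~\ref{theo:dg} turning the classes of letter cylinders into the canonical basis, and the affine isomorphism $\mu\mapsto\tau_\mu$ turning invariant measures into traces---so that the abstract trace space becomes the concrete simplex $\{\boldsymbol{\mu}\mid\mu\in\mathcal{M}(X_i,S)\}$. The remaining care concerns only the bookkeeping between $M$, $N=M^{-1}$ and their transposes, together with the elementary fact that the normalization $M\mathbf{1}=\mathbf{1}$ is preserved under inversion and transposition in the way required by the statement.
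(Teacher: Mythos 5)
Your proposal is correct and follows essentially the same route as the paper: both reduce, via Theorem~\ref{oe}, to an isomorphism of the explicit dimension groups of Theorem~\ref{theo:dg}, identify traces with letter measures through the Herman--Putnam--Skau correspondence, and translate the action of the unimodular matrix on traces into its transpose acting on the measure vectors $\boldsymbol{\mu}$. The only cosmetic difference is that you run the converse through simplicity and the trace characterization of the positive cone, whereas the paper directly unwinds the definition of the cone in \eqref{align:ZdGD}; since that cone is by definition the set of vectors pairing positively with all measure vectors, the two arguments coincide.
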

\begin{proof}
For $i = 1,2$, let $\Phi_i:H(X_i,S) \to \Z^{d_i}$ be the map given in Theorem~\ref{theo:dg}, where $d_i$ is the cardinality of the alphabets $\A_i$ of $X_i$.
Let us also write ${\bf 1}_i$ the vector of dimension $d_i$ only consisting in $1$'s and
\[
	C_i 
	= 
	\Phi_i(H^+(X_i,S))
	=
	\{ {\bf x} \in {\mathbb Z}^{d_i} 
	\mid 
	\langle {\bf x},  \boldsymbol{ \mu} \rangle > 0 
	\mbox{ for all } \mu \in  {\mathcal M} (X_i,S) \}
	\cup \{{\mathbf 0}\},	
\]
so that $\Phi_i$ defines an isomorphism of dimension groups from $K^0(X_i,S)$ onto $(\Z^{d_i},C_i,{\bf{1}}_i)$.

First assume that $(X_1,S)$ and $(X_2,S)$ are strong orbit equivalent.
Theorem~\ref{oe} implies that there is an isomorphism of dimension group from $(\Z^{d_2},C_2,{\bf{1}}_2)$ onto $(\Z^{d_1},C_1,{\bf{1}}_1)$.
Hence $d_1 = d_2 = d$ and this isomorphism is given by a unimodular matrix $M$ of dimension $d$ satisfying $M{\bf 1} = {\bf 1}$ (where ${\bf 1} = {\bf 1}_1 = {\bf 1}_2$) and $MC_2 = C_1$. 
We also denote by $M$ the map ${\bf x} \in \Z^d \mapsto M {\bf x}$.

Recall from Section~\ref{subsec:cyl} that the map
\[
\mu \in \mathcal M(X_i,S) \mapsto \left(\tau_\mu: [f] \in H(X_i,S) \mapsto \int f d\mu\right)
\]
is an affine isomorphism from $\mathcal M(X_i,S)$ to $\T(K^0(X_i,S))$.
Observing that for all $\mu \in \mathcal M(X_1,S)$, $\tau_\mu \circ \Phi_1^{-1} \circ M \circ \Phi_2$ is a trace of $K^0(X_2,S)$, it defines an affine isomorphism
$\mu \in \mathcal M(X_1,S) \mapsto \nu \in \mathcal M(X_2,S)$, where $\nu$ is such that $\tau_\nu = \tau_\mu \circ \Phi_1^{-1} \circ M \circ \Phi_2$.
Since $\boldsymbol{\mu} = (\tau_\mu([\chi_{[a]}]))_{a \in \A_1}$, we have, for all $a \in \A_2$,
\[
	\nu([a]) 
	= \tau_\nu( [\chi_{[a]}]) 
	= \tau_\mu \circ \Phi_1^{-1} \circ M \circ \Phi_2 ( [\chi_{[a]}])
	= \boldsymbol{\mu}^{\rm T} M e_a
	= e_a^{\rm T} M^{\rm T} \boldsymbol{\mu}, 
\] 
so that $\boldsymbol{\nu} = M^{\rm T} \boldsymbol{\mu}$.

Now assume that we are given a unimodular matrix $M$ satisfying $M {\bf 1} = {\bf 1}$ and 
\[
	\{\boldsymbol{\nu} \mid \nu \in \mathcal M(X_2,S)\}
	=
	\{M^{\rm T} \boldsymbol{\mu} \mid \mu \in \mathcal M(X_1,S)\}.	
\]
In particular, this implies that $d_1 = d_2=d$.
Let us show that the map $M: {\bf x} \in \Z^d \mapsto M {\bf x}$ defines an isomorphism of dimension groups from $(\Z^{d},C_1,{\bf{1}})$ to $(\Z^{d},C_2,{\bf{1}})$.
We only need to show that $MC_1 = C_2$.
The matrix $M$ being unimodular, we have $M {\bf x} = {\bf 0}$ if and only if ${\bf x} = {\bf 0}$.
For ${\bf x} \neq {\bf 0}$, we have 
\begin{align*}
	{\bf x} \in C_2 
	& \Leftrightarrow \langle {\bf x},\boldsymbol{\nu} \rangle >0 \text{ for all } \nu \in \mathcal{M}(X_2,S)	\\
	& \Leftrightarrow \langle {\bf x},M^{\rm T}\boldsymbol{\mu} \rangle >0 \text{ for all } \mu \in \mathcal{M}(X_1,S)	\\
	& \Leftrightarrow \langle M {\bf x},\boldsymbol{\mu} \rangle >0 \text{ for all } \mu \in \mathcal{M}(X_1,S)	\\
	& \Leftrightarrow M {\bf x} \in C_1,
\end{align*}
which ends the proof.
\end{proof}

%%%%%%%%%%%%%%%%%%%%%%%%%%%%%%%%%%%%

According to  Theorem \ref{theo:dg},  dimension groups of primitive unimodular proper subshifts have  rank $d$.
This implies that the number $e$  of  ergodic measures  satisfies  $e\leq d$. 
In fact,  we have even more from the following result. 

\begin{proposition}\label{prop:effrosshen81}
\cite[Proposition 2.4]{Effros&Shen:1981}
Finitely generated simple dimension groups of rank $d$ have at most $d-1$ pure traces.
\end{proposition}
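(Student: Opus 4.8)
The plan is to prove the sharp statement by excluding $d$ pure traces outright, the soft bound $e\le d$ being the easy half; throughout I write $(G,G^+,u)$ for the dimension group and $e$ for its number of pure traces, and I assume $d\ge 2$ (for $d=1$ the group is $\mathbb{Z}$ with its standard order and a single trace). First I would pin down the underlying group. Being a dimension group, $(G,G^+,u)$ is unperforated, hence torsion-free; being finitely generated of rank $d$ it is therefore isomorphic as an abelian group to $\mathbb{Z}^d$, and I identify $G=\mathbb{Z}^d$. Each trace $p\in\T(G,G^+,u)$ is a homomorphism $\mathbb{Z}^d\to\mathbb{R}$, so it is represented by a vector $\mathbf v_p\in\mathbb{R}^d$ via $p(\mathbf x)=\langle \mathbf x,\mathbf v_p\rangle$, normalised by $p(u)=1$. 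Thus every trace lies on the affine hyperplane $\{\mathbf q\in(\mathbb{R}^d)^*: \mathbf q(u)=1\}$, of affine dimension $d-1$. Since the trace space of a dimension group is a Choquet simplex (see \cite{EffrosHandelmanShen:1980,Effros}) and here has finitely many extreme points, these extreme points, namely the pure traces $p_1,\dots,p_e$, are affinely independent; as affinely independent points in a $(d-1)$-dimensional affine space number at most $d$, we get $e\le d$, and the vectors $\mathbf v_{p_1},\dots,\mathbf v_{p_e}$ are linearly independent.

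It remains to exclude $e=d$, and this is exactly where simplicity must be used. Suppose $e=d$. The infinitesimal subgroup $\mathrm{Inf}(G)=\bigcap_{i=1}^d\ker p_i$ then has rank $d-e=0$ inside the torsion-free group $\mathbb{Z}^d$, so $\mathrm{Inf}(G)=\{\mathbf 0\}$. Consequently $\phi=(p_1,\dots,p_d)\colon G\to\mathbb{R}^d$ is injective and its image $\Lambda=\phi(G)$ is a subgroup of rank $d$ in $\mathbb{R}^d$, i.e.\ a full lattice, which in particular is discrete. Moreover, as recalled in Section~\ref{subsec:cyl}, for a simple dimension group the order is recovered from the traces, so $\phi$ carries the order of $G$ to the \emph{strict coordinatewise} order on $\Lambda$: for $\mathbf y=\phi(\mathbf x)\ne\mathbf 0$ one has $\mathbf x\in G^+$ if and only if all coordinates of $\mathbf y$ are strictly positive.

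The contradiction I would then extract is that such an ordered group cannot be a dimension group when $d\ge 2$. The clean route is to invoke the standard dichotomy for simple dimension groups: a simple dimension group is either isomorphic to $\mathbb{Z}$ or has dense image in the space $\mathrm{Aff}(\T(G,G^+,u))$ of continuous affine functions on its trace simplex (Goodearl, \emph{Partially Ordered Abelian Groups with Interpolation}; see also \cite{Effros}). Here $\T(G,G^+,u)$ is the $(d-1)$-simplex with vertices $p_1,\dots,p_d$, so $\mathrm{Aff}(\T(G,G^+,u))\cong\mathbb{R}^d$ and evaluation identifies the image of $G$ with $\Lambda$; being a full lattice, $\Lambda$ is discrete, hence not dense, and since $d\ge 2$ forbids $G\cong\mathbb{Z}$ this contradicts the dichotomy, giving $e\le d-1$. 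I expect the main obstacle to be justifying this last step, i.e.\ that the open simplicial (strict coordinatewise) order on a full lattice $\Lambda\subset\mathbb{R}^d$ fails the Riesz interpolation property for $d\ge 2$, which is the concrete content behind the dichotomy. I would prove this directly by producing $a_1,a_2\le b_1,b_2$ with no interpolant. If some projection $\pi_i(\Lambda)$ is cyclic, the gap between consecutive values forces the interpolation window in the $i$-th coordinate to contain no lattice value, exactly as in the model case $\Lambda=\mathbb{Z}^2$; if every $\pi_i(\Lambda)$ is dense, I would instead use the discreteness (cocompactness) of $\Lambda$ to place the admissible region for an interpolant inside a lattice-free axis-parallel box, in both cases choosing the $a_i$ pairwise incomparable and the $b_j$ pairwise incomparable to rule out the equality escapes. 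The nonexistence of an interpolant then violates Riesz interpolation and contradicts that $(G,G^+,u)$ is a dimension group, completing the proof.
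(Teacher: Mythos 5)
The paper offers no argument for Proposition \ref{prop:effrosshen81}: it is quoted with a citation to Effros--Shen, so your proposal cannot be matched against an internal proof and must be judged on its own. On its own terms it is essentially correct for $d\ge 2$ (and you rightly flag that $d=1$ must be excluded, since $\mathbb{Z}$ has one pure trace). Your route --- identify $G$ with $\mathbb{Z}^d$ via unperforation, realize traces as vectors on the affine hyperplane $\{q : q(u)=1\}$, use the fact that the trace space is a Choquet simplex (and a finite-dimensional Choquet simplex is a classical simplex) to get at most $d$ pure traces with linearly independent representing vectors, and then exclude $e=d$ because the canonical image of $G$ in $\mathrm{Aff}(\mathcal{T}(G,G^+,u))\cong\mathbb{R}^d$ would be a full lattice, hence discrete, contradicting the standard dichotomy that a simple dimension group is either cyclic or has dense image in $\mathrm{Aff}$ of its trace space --- is a legitimate proof resting on two standard external results (the finite-dimensional simplex fact and Goodearl's density theorem), neither of which is circular with the cited Effros--Shen proposition. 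It also meshes well with this paper, since the key fact that traces recover the order of a simple dimension group is exactly what is recalled in Section~\ref{subsec:cyl}.

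Two points need repair. First, your justification that $\Lambda=\phi(G)$ is a full lattice --- ``a subgroup of rank $d$ in $\mathbb{R}^d$, i.e.\ a full lattice'' --- invokes a false general principle: a rank-$d$ subgroup of $\mathbb{R}^d$ need not be discrete (the subgroup of $\mathbb{R}^2$ generated by $(1,0)$ and $(\sqrt{2},0)$ has rank $2$). What actually saves you is the linear independence of $\mathbf{v}_{p_1},\dots,\mathbf{v}_{p_d}$ established just before: with $e=d$, the map $\phi$ is the restriction to $\mathbb{Z}^d$ of an invertible linear map of $\mathbb{R}^d$, and a linear isomorphism carries lattices to lattices; state it this way. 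Second, your fallback ``direct'' verification that the strict coordinatewise order on a full lattice violates Riesz interpolation is only a sketch, and the case where every coordinate projection of $\Lambda$ is dense is substantially harder than your sentence suggests: the lower and upper corners of the would-be lattice-free box must respectively be the componentwise join of two incomparable lattice points and the componentwise meet of two incomparable lattice points, and producing such configurations in the dense case requires an equidistribution-type argument (lattice points in a thin coordinate slab have complementary coordinates unbounded in both directions), with additional care when $d\ge 3$; only the cyclic-projection case is as easy as you claim. Since your primary route through the density dichotomy does not need this, the gap is not fatal, but the direct argument should either be carried out in full or dropped.
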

Dimension groups of minimal Cantor systems $(X,T)$ are simple dimension groups (Theorem \ref{theo:simpleminimal}) and,   since the Choquet simplex of traces is affinely isomorphic to the simplex of ergodic measures,  we derive the following.
\begin{corollary}\label{cor:d-1}
Primitive unimodular proper $\S$-adic subshifts over a $d$-letter alphabet  have at most $d-1$ ergodic measures.
\end{corollary}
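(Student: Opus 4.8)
The plan is to assemble three ingredients already established in the paper: the explicit shape of the dimension group, its simplicity, and the dictionary between traces and invariant measures. The statement is a direct corollary, so the work consists entirely in checking that the hypotheses of Proposition \ref{prop:effrosshen81} are met and then transporting its conclusion back to the level of measures.

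First I would observe that $(X,S)$ is a minimal Cantor system. Indeed, being a primitive unimodular proper $\S$-adic subshift it is minimal (this is the content of Proposition \ref{prop:minimalSSIprimitif}), and being an aperiodic infinite subshift it is a Cantor system. Consequently Theorem \ref{theo:simpleminimal} applies and guarantees that its dimension group $K^0(X,S)$ is simple. Next I would invoke Theorem \ref{theo:dg}, which identifies $K^0(X,S)$ with an ordered group whose underlying abelian group is $\mathbb{Z}^d$; in particular $K^0(X,S)$ is finitely generated and of rank exactly $d$. Having checked that $K^0(X,S)$ is simple, finitely generated, and of rank $d$, I may apply Proposition \ref{prop:effrosshen81} of Effros and Shen, which yields that $K^0(X,S)$ has at most $d-1$ pure traces.

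It then remains to translate this bound on pure traces into a bound on ergodic measures. As recalled in Section \ref{subsec:cyl}, the assignment $\mu \mapsto \tau_\mu$ is an affine isomorphism from $\mathcal{M}(X,S)$ onto $\T(K^0(X,S))$, and since affine isomorphisms carry extremal points bijectively to extremal points, it sends ergodic measures precisely to pure traces. Therefore the number of ergodic $S$-invariant probability measures equals the number of pure traces of $K^0(X,S)$, which is at most $d-1$.

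There is no genuine obstacle here: the entire difficulty has already been discharged in Theorem \ref{theo:dg}, which pins down the rank as $d$, and in Theorem \ref{theo:simpleminimal}, which supplies simplicity. The only points deserving a moment of care are ensuring that the hypotheses of Effros and Shen's proposition are literally satisfied, namely finite generation together with rank $d$ (from Theorem \ref{theo:dg}) and simplicity (from minimality via Theorem \ref{theo:simpleminimal}); once these are in place, the conclusion follows immediately through the measure-to-trace correspondence.
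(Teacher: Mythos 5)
Your proof is correct and follows essentially the same route as the paper: Theorem \ref{theo:dg} for rank $d$ and finite generation, Theorem \ref{theo:simpleminimal} for simplicity, Proposition \ref{prop:effrosshen81} for the bound of $d-1$ pure traces, and the affine isomorphism $\mu \mapsto \tau_\mu$ to transfer the bound to ergodic measures. The only difference is that you spell out the intermediate verifications (minimality via Proposition \ref{prop:minimalSSIprimitif}, extremal points mapping to extremal points) which the paper leaves implicit.
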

If the primitive unimodular proper $\S$-adic subshift $(X,S)$ has some extra combinatorial properties, then  the number of ergodic measures can   be smaller. 
Suppose indeed  that $(X,S)$ is a minimal dendric subshift on a $d$-letter alphabet.
As its factor complexity equals   $(d-1)n + 1$,  one has  a priori $e \leq  d-2$ for $d \geq 3$  according to  \cite[Theorem 7.3.4]{CANT}.
One can even   have more  as a direct consequence of  \cite{Damron:2019} and  \cite{DolPer:2019}. Note that  this statement  encompasses    the case of interval exchanges handled in  \cite{Katok:73,Veech:78}. 

\begin{theorem} \label{theo:ergodic}
Let $(X,S)$ be a minimal dendric subshift over a $d$-letter alphabet. 
One has 
$$\mbox{\rm Card}({\mathcal M}_e (X,S)) \leq  \frac{d}{2}.$$
\end{theorem}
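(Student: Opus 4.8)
The plan is to place minimal dendric subshifts within the scope of the bound of Damron and Fickenscher \cite{Damron:2019} on the number of ergodic measures of subshifts satisfying the \emph{regular bispecial condition}, and then to feed in the affine factor complexity of dendric subshifts. First I would collect the two combinatorial inputs. A minimal dendric subshift $(X,S)$ over a $d$-letter alphabet is aperiodic (Section~\ref{subsec:tree}), and its factor complexity satisfies $p_X(n) = (d-1)n+1$ by~\cite{BDFDLPR:16}; in particular the first difference of the complexity is constant, $p_X(n+1)-p_X(n) = d-1$ for every $n$, so that $\liminf_n\bigl(p_X(n+1)-p_X(n)\bigr) = d-1$. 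Since $(X,S)$ is minimal and aperiodic, every invariant measure is non-atomic, so the Damron--Fickenscher count applies to all of $\mathcal{M}_e(X,S)$.

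The heart of the argument is to verify that a minimal dendric subshift satisfies the regular bispecial condition, and this is where the tree structure of the extension graphs enters through the analysis of \cite{DolPer:2019}. Because $\E(w)$ is a tree for every factor $w$, each bispecial factor is neutral: the number of edges of $\E(w)$ equals $\ell(w)+r(w)-1$, which forces the bilateral multiplicity to vanish (this is also what yields the affine complexity above). More to the point, the explicit description of the bispecial factors of a dendric subshift, and of the way two consecutive bispecial factors extend one another, obtained in \cite{DolPer:2019} is exactly of the regular form demanded by Damron and Fickenscher. I would therefore argue that all sufficiently long bispecial factors of $(X,S)$ are regular, so that the hypothesis of \cite{Damron:2019} holds.

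Finally, applying the bound of \cite{Damron:2019}, according to which a minimal subshift satisfying the regular bispecial condition carries at most $\tfrac12\bigl(\liminf_n(p_X(n+1)-p_X(n))+1\bigr)$ ergodic probability measures, and substituting $\liminf_n(p_X(n+1)-p_X(n)) = d-1$, yields
$$
\Card(\mathcal{M}_e(X,S)) \;\le\; \tfrac12\bigl((d-1)+1\bigr) \;=\; \frac{d}{2}.
$$
As a consistency check, for $d=2$ (Sturmian) and $d=3$ (for instance Arnoux--Rauzy) this gives unique ergodicity, and for natural codings of $d$-interval exchange transformations it recovers the Katok--Veech bound \cite{Katok:73,Veech:78}, as announced.

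The step I expect to be the genuine obstacle is the middle one: translating between the extension-graph vocabulary of dendric subshifts and the notion of regular bispecial condition, and in particular checking that no exceptional (non-regular) bispecial factor can occur beyond a finite initial segment. Everything else reduces to the complexity computation and a direct invocation of the cited bound.
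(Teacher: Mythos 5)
Your proposal is correct and follows essentially the same route as the paper: both arguments verify that minimal dendric subshifts satisfy the regular bispecial condition by invoking \cite{DolPer:2019}, and then apply the Damron--Fickenscher bound from \cite{Damron:2019} with the complexity difference $p_X(n+1)-p_X(n)=d-1$ to get the bound $d/2$. The middle step you flag as the genuine obstacle is handled in the paper exactly as you propose, namely by citing \cite{DolPer:2019} rather than re-deriving the regularity of long bispecial factors from the tree structure.
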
 
\begin{proof}
According  to \cite{Damron:2019}, a  minimal subshift is  said to  satisfy the 
 regular bispecial condition if any large enough bispecial word $w$ has only one left extension $aw \in \La(X)$, $a \in \A$, that is right special and only one right extension $wa \in \La(X)$, $a \in \A$, that is left special. Now we use the fact  that   minimal  dendric subshifts   satisfy  the regular bispecial condition according to  \cite{DolPer:2019}.
 We conclude  by using the  upper  bound on  the number ergodic measures from  \cite{Damron:2019}.
\end{proof} 

%%%%%%%%%%%%%%%%%%%%%%%%%%%%%%%%%%%
\section{Infinitesimals and balance property} \label{sec:saturation}
%%%%%%%%%%%%%%%%%%%%%%%%%%%%%%%%%%%

%\input{section4.tex}

When the infinitesimal  subgroup  $ \mbox{\rm Inf} (K^0 (X,T))$ of a minimal Cantor system $(X,T)$ is trivial, the system is called {\em saturated}. This  property is proved  in \cite{TetRob:2016} to hold for  primitive, aperiodic, irreducible substitutions  for  which images of letters  have a common prefix.
At the opposite, an example of a dendric  subshift with non-trivial  infinitesimal subgroup is provided in Example \ref{ex:nontrivial}. 
 A  formulation of saturation in terms of the  topological full group is given in   \cite{BezKwia:00}. Recall also that  for saturated systems, the quotient group $I(X,T)/E(X,T)$ is torsion-free by \cite[Theorem 1]{CortDP:16}  (see also \cite{GHH:18}).

We  first state   a  characterization of  the triviality of the infinitesimal subgroup $\mbox{\rm Inf} (K^0 (X,S))$ for minimal   unimodular   proper $\S$-adic  subshifts   (see  Proposition \ref{theo:saturated}).  We then relate    the saturation property  with  a combinatorial notion called {\em balance property}   and  we provide   a  topological  characterization of primitive
unimodular proper $\S$-adic subshifts that are balanced  (see Corollary \ref{cor:balanced}).

\begin{proposition}\label{theo:saturated}
Let $(X,S)$ be a minimal   unimodular   proper $\S$-adic  subshift on a $d$-letter alphabet $A$.
The   infinitesimal  subgroup  $\mbox{Inf} (K^0 (X,S))$  is   non-trivial  
if  and only if there is a   non-zero vector ${\mathbf x} \in {\mathbb Z}^d$
orthogonal to any element of the simplex of letter measures. 

\noindent In particular, if  there exists  some invariant measure $\mu \in {\mathcal M}(X,S)$  for which the frequencies of letters $\mu([a])$, $a \in A$, are rationally independent, then the   infinitesimal  subgroup  $\mbox{Inf} (K^0 (X,S))$  is   trivial.
 \end{proposition}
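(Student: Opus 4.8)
The plan is to translate the definition of the infinitesimal subgroup into a statement about integer vectors, using the description of $H(X,S)$ furnished by Theorem~\ref{theo:cohoword}. That theorem tells me that every class $[f]\in H(X,S)$ can be written uniquely as $[f]=\sum_{a\in\A}x_a[\chi_{[a]}]$ with ${\bf x}=(x_a)_{a\in\A}\in\mathbb{Z}^d$, the uniqueness being exactly the $\mathbb{Q}$-independence of the classes $[\chi_{[a]}]$. Since cohomologous functions integrate to the same value against any $S$-invariant measure, I obtain, for every $\mu\in\mathcal{M}(X,S)$,
$$\int f\,d\mu=\sum_{a\in\A}x_a\,\mu([a])=\langle {\bf x},\boldsymbol{\mu}\rangle.$$

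With this dictionary in hand, the equivalence is immediate. By definition $[f]\in\mbox{Inf}(K^0(X,S))$ means $\int f\,d\mu=0$ for all $\mu\in\mathcal{M}(X,S)$, hence by the displayed identity it is equivalent to $\langle {\bf x},\boldsymbol{\mu}\rangle=0$ for every $\mu\in\mathcal{M}(X,S)$, that is, to ${\bf x}$ being orthogonal to every element of the simplex of letter measures (recall that this simplex is precisely the set of vectors $\boldsymbol{\mu}$, $\mu\in\mathcal{M}(X,S)$). Moreover, by the $\mathbb{Q}$-independence again, $[f]=0$ if and only if ${\bf x}={\bf 0}$. Therefore $\mbox{Inf}(K^0(X,S))$ contains a non-zero class if and only if there is a non-zero integer vector ${\bf x}\in\mathbb{Z}^d$ orthogonal to the whole simplex of letter measures, which is the asserted equivalence.

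For the final assertion I would argue by contraposition using the equivalence just established. Assume that some $\mu\in\mathcal{M}(X,S)$ has rationally independent letter frequencies $(\mu([a]))_{a\in\A}$, and suppose ${\bf x}\in\mathbb{Z}^d$ is orthogonal to the simplex of letter measures. Then in particular $\langle {\bf x},\boldsymbol{\mu}\rangle=\sum_{a\in\A}x_a\,\mu([a])=0$ is a vanishing integer combination of the $\mu([a])$, so rational independence forces ${\bf x}={\bf 0}$. Hence no non-zero integer vector is orthogonal to the simplex, and the equivalence yields that $\mbox{Inf}(K^0(X,S))$ is trivial. I do not expect a genuine obstacle here: the whole argument is a transport of structure through Theorem~\ref{theo:cohoword}, and the only point demanding a word of justification is that orthogonality to all the vectors $\boldsymbol{\mu}$ with $\mu\in\mathcal{M}(X,S)$ coincides with orthogonality to the simplex of letter measures, which holds by the very definition of the latter.
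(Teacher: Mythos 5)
Your proof is correct and takes essentially the same approach as the paper's: where the paper invokes Theorem~\ref{theo:dg} (the packaged isomorphism $H(X,S)\cong\mathbb{Z}^d$ sending $[\chi_{[a]}]$ to $e_a$), you unwind it to Theorem~\ref{theo:cohoword} directly, but the argument is the same translation of infinitesimals into integer vectors orthogonal to the letter-measure vectors, with triviality of a class corresponding to the zero vector via the $\mathbb{Q}$-independence of the $[\chi_{[a]}]$. The contrapositive treatment of the rational-independence statement also matches the paper's.
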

 \begin{proof}
 According to Theorem \ref{theo:dg},  the elements of  $ \mbox{\rm Inf} (K^0 (X,S))$ are the  classes of functions  that are represented by  vectors ${\mathbf x} \in {\mathbb Z}^d$ such that 
 $\langle  {\mathbf x}, \boldsymbol{ \mu} \rangle  =0$ for every $\mu \in {\mathcal M} (X,S) $. 
Recall also  that coboundaries    are represented by the vector ${\mathbf 0}$. 
Hence   $\mbox{Inf} (K^0 (X,S))$  is  not trivial   if  and only if there exists  $ {\mathbf x} \in {\mathbb Z}^d$, with  ${\mathbf x} \neq {\mathbf 0}$,
such that $\langle {\bf x},  \boldsymbol{ \mu} \rangle =  0$,  for every   $\mu \in \M(X,S)$.

 Assume now that  there exists  some invariant measure $\mu \in {\mathcal M}(X,S)$  for which the frequencies of letters    are rationally independent.
Hence,  for any vector  ${\mathbf x} \in {\mathbb Z}^d$, $\langle {\bf x},  \boldsymbol{ \mu} \rangle =  0$  implies  that   ${\mathbf x}={\mathbf 0}$.
From above, this implies that  $\mbox{Inf} (K^0 (X,S))$  is   trivial.
\end{proof}

See  Example \ref{ex:nontrivial}   for an example of a dendric subshift with non-trivial infinitesimals.

We now introduce  a notion of  balance for functions. Let $(X,T)$ be a minimal Cantor system. 
We say  that $f \in C (X , \mathbb{R} )$ is {\em balanced} for $(X,T)$    whenever  there exists a  constant $C_f>0$ 
such that $$|\sum_{i=0}^n f(T^ix) - f(T^iy) | \leq C_f \mbox { for all  }x, y \in X \mbox{ and for all } n .$$

Balance property  is usually expressed for   letters and factors  (see  for instance 
\cite{BerCecchi:2018}). Indeed a    minimal  subshift   $(X,S)$ is said to be {\em balanced   on the factor} $v \in {\mathcal L} (X)$  if $\chi_{[v]}: X \to  \{0,1\}$ is balanced,  or, equivalently, 
if there exists a constant $C_v$ such that for  all  $w,w'$  in $\mathcal{L}_X$ with $|w|=|w'|$,  then     $||w|_v-|w'|_v|\leq C_v.$  
It is   {\em  balanced on  letters} if it is balanced on each letter,  and   it is  {\em balanced on   factors}   if it is balanced on all  its   factors.

More generally, we say that a system $(X,T)$ is balanced on a subset $H \subset C(X, \mathbb{R})$ whenever it is balanced for all $f$ in $H$. 
It is standard to check that any system $(X,T)$ is balanced on the (real)  coboundaries.
Of course, a subshift $(X,S)$ is balanced on a generating set of $C(X,\mathbb{Z})$ if and only if  it is balanced on factors   or, equivalently, 
  if every  $f\in C(X,\mathbb{Z})$ is balanced.
  
One can observe that the balance property on letters is not necessarily preserved under topological conjugacy whereas the balance property  on factors is.
Indeed, consider the shift  generated by the  Thue--Morse substitution $\sigma \colon a \mapsto ab, b \mapsto ba$. 
It is clearly balanced on letters. 
It is  conjugate to the shift generated by the substitution
$\tau \colon  a \mapsto bb$, $b\mapsto bd$, $c \mapsto  ca$, $d\mapsto cb$  via the  sliding block code
$00  \mapsto a,$  $01 \mapsto b,$ $10  \mapsto c,$  $11  \mapsto d$ (see \cite[p.149]{Queffelec:2010}).
The   subshift generated by $\tau$ is  not balanced  on letters  (see \cite{BerCecchi:2018}). 

Next   proposition will be useful to characterize balanced functions of a system $(X,T)$.
\begin{proposition}\label{prop:carBalanced}
Let $(X,T)$ be a minimal dynamical system. An integer valued continuous function $f  \in C(X, {\mathbb Z})$ is balanced for $(X,T)$ if and only if there exists  $\alpha \in {\mathbb R}$
such that  the map $f-\alpha$ is a real coboundary.  
In this case, $\alpha = \int f d\mu$, for any $T$-invariant probability measure $\mu$ in $X$. 
\end{proposition}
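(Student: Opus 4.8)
The plan is to prove the two implications separately and to identify the value of $\alpha$ along the way. For the easy direction, suppose $f-\alpha = g\circ T - g$ for some $g\in C(X,\mathbb{R})$ and some $\alpha\in\mathbb{R}$. Then the Birkhoff sums telescope: for every $x$ and every $n$ one has $\sum_{i=0}^{n}(f-\alpha)(T^i x) = g(T^{n+1}x)-g(x)$, which is bounded in absolute value by $2\|g\|_\infty$ by compactness of $X$. Subtracting the corresponding identities at two points $x,y$, the constant $\alpha$ cancels and we obtain $\big|\sum_{i=0}^{n}(f(T^i x)-f(T^i y))\big|\le 4\|g\|_\infty$, so $f$ is balanced. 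Moreover, integrating $f-\alpha = g\circ T - g$ against any $T$-invariant probability measure $\mu$ and using $\int g\circ T\,d\mu = \int g\,d\mu$ gives $\alpha=\int f\,d\mu$, which already yields the final assertion on $\alpha$.

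For the converse, assume $f$ is balanced with constant $C_f$. Fix any $T$-invariant probability measure $\mu$ (one exists by compactness) and set $\alpha=\int f\,d\mu$. The key step is to upgrade the balance inequality, which only controls \emph{differences} of Birkhoff sums, into an absolute bound on the centered sums. By invariance of $\mu$ we have $\int\big(\sum_{i=0}^{n}f(T^i x)\big)\,d\mu(x)=(n+1)\alpha$ for every $n$, so averaging the balance inequality over the variable $y$ gives
\[
\Big|\sum_{i=0}^{n}f(T^i x)-(n+1)\alpha\Big|
=\Big|\int\sum_{i=0}^{n}\big(f(T^i x)-f(T^i y)\big)\,d\mu(y)\Big|\le C_f
\]
for all $x$ and all $n$. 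Running the same computation with a second invariant measure $\mu'$ and its value $\alpha'$ forces $|\alpha-\alpha'|\le 2C_f/(n+1)\to 0$, so $\alpha$ is in fact independent of the chosen measure, consistently with the statement.

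It remains to conclude that $f-\alpha$ is a real coboundary, and here I would invoke the Gottschalk--Hedlund theorem \cite{GotHed:55}: for a minimal system $(X,T)$ and $h\in C(X,\mathbb{R})$, the function $h$ is a real coboundary if and only if its Birkhoff sums $\big(\sum_{i=0}^{n}h(T^i x)\big)_{n}$ are uniformly bounded (equivalently, bounded at a single point). Applying this to $h=f-\alpha\in C(X,\mathbb{R})$, whose centered Birkhoff sums we have just bounded by $C_f$, we obtain $g\in C(X,\mathbb{R})$ with $f-\alpha=g\circ T-g$, as desired. The main obstacle is precisely the second paragraph: passing from the relative balance bound to an absolute bound on the centered Birkhoff sums. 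The integration against an invariant measure is exactly what makes this work, and it simultaneously shows that $\alpha=\int f\,d\mu$ does not depend on $\mu$, so that the hypothesis of Gottschalk--Hedlund is met.
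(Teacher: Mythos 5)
Your proof is correct, and your easy direction (telescoping the coboundary identity, then integrating it against an invariant measure to identify $\alpha$) is the same as the paper's. For the converse, however, you take a genuinely different route to the crucial uniform bound $\left|\sum_{i=0}^{n} f(T^i x) - (n+1)\alpha\right| \le C_f$. The paper never invokes the existence of an invariant measure at this stage: it introduces $N_p$ (essentially $\inf_x \sum_{i=0}^{p} f(T^i x)$, so that all Birkhoff sums of length $p+1$ lie in $[N_p, N_p + C]$), compares $qN_p$ with $pN_q$ to show that $(N_p/p)_p$ is a Cauchy sequence, defines $\alpha$ as its limit, and only then obtains the centered bound. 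You instead fix an invariant measure $\mu$ (Krylov--Bogolyubov), set $\alpha = \int f\,d\mu$, and get the bound in one line by averaging the balance inequality over the variable $y$ against $\mu$, using $T$-invariance to compute $\int \sum_{i=0}^{n} f(T^i y)\,d\mu(y) = (n+1)\alpha$. Both arguments then finish identically with Gottschalk--Hedlund applied to $f-\alpha$. Your averaging trick is shorter, identifies $\alpha$ as $\int f\,d\mu$ from the outset, and shows directly that this value does not depend on the chosen invariant measure (a fact the paper only recovers afterwards from the easy direction); what the paper's more elementary construction buys is self-containedness, since it produces $\alpha$ purely from the balance inequality without appealing to the existence of invariant measures, showing in passing that balancedness alone forces uniform convergence of the Birkhoff averages.
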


\begin{proof}
If the function $f-\alpha$ is a real coboundary, one easily checks  that $f$ is balanced. 
Moreover, the   integral with respect to any $T$-invariant probability measure is zero, providing the last claim.

Assume that $f  \in C(X, {\mathbb R})$ is balanced for $(X,T)$. Let $C>0$  be a constant such that
 $  |\sum_{i=0}^n f\circ T^{i}(x)-f\circ T^{i}(y)| \le C$ holds uniformly in $x,y \in X$  for all  $n\ge 0$.  
Thus,  for any non-negative integer $p \in \N$, there exists $N_{p}$ such that, for any $x\in X$, one has the following inequalities:
$$N_{p} \le \sum_{i=0}^p f\circ T^i(x) \le N_{p} +C.$$
Moreover, one checks that,  for any $p,q \in \N$:
$$
qN_{p} \le \sum_{i=0}^{pq} f\circ T^i(x) \le qN_{p} +qC \textrm{ and }   pN_{q} \le \sum_{i=0}^{pq} f\circ T^i(x) \le pN_{q} +pC.
$$
It follows that $-qC \le qN_{p} -pN_{q} \le pC$ and thus $-C/p \le N_{p}/p -N_{q}/q \le C/q $.  Hence the sequence $(N_{p}/p)_{p}$ is a Cauchy sequence. 
Let $\alpha = \lim_{p\to \infty} N_{p}/p$. By letting $q$ going to infinity, we get $ -C \le N_{p}-p\alpha \le 0$, so that  $-C \le \sum_{i=0}^p f\circ T^i(x) -p\alpha \le C$ for any $x\in X$. By the classical   Gottschalk--Hedlund's Theorem \cite{GotHed:55}, the function $f-\alpha$ is a  real coboundary.
\end{proof}

As a corollary, we deduce that  a minimal Cantor  system $(X,T)$ balanced on $C(X,{\mathbb Z})$  is uniquely ergodic. 
It also follows that  for a   minimal  subshift $(X,S)$   balanced on the factor $v$,    the  frequency $\mu_{v} \in {\mathbb R}_{+}$ of $v$ exists,
i.e., 
for any    $x \in X$,   
$\lim_{ n  \rightarrow \infty}  \frac{ | x_{-n}  \cdots  x_0 \cdots x_{n}| _v }{  2n+1}=  \mu_v$,  and even,   the quantity $\sup _{n \in \mathbb{N}} | | x_{-n}  \cdots  x_0 \cdots x_{n}| _v -  (2n+1)  \mu_v| $ is finite (see  also \cite{BerTij:2002}). 

Actually, integer-valued continuous functions that are balanced for a minimal Cantor system $(X,T)$ are  related to the continuous eigenvalues of the system as illustrated by the following folklore  lemma.  We recall that  $E(X,T)$ stands for the set of additive  continuous eigenvalues.

\begin{lemma}\label{lem:BalancedEigenvalue}
Let $(X,T)$ be a minimal Cantor system  and let $\mu$ be a $T$-invariant measure. If $f \in C(X,\Z)$ is balanced  for $(X,T)$, then $\int f d\mu $ belongs to $ E(X,T)$. 
\end{lemma}
\begin{proof}
If $f \in C(X,\Z)$ is  balanced  for $(X,T)$, then so is $-f$ and there exists $g \in  C(X,\mathbb{R})$  such that 
$ -f+ \int f d\mu = g \circ T-g$ (by Proposition \ref{prop:carBalanced}).
This yields $\exp({2i\pi g\circ T})= \exp ({2i\pi  \int f d \mu }) \exp ({ 2i \pi  g})$ by noticing that   $\exp({- 2i\pi  f (x)})=1$ for  any $x\in X$. 
Hence $\exp ({ 2i \pi  g})$ is a continuous   eigenfunction  associated with the  additive  eigenvalue $\int f d \mu$.
 \end{proof}

We first  give a statement valid  for any  minimal  Cantor system  that will then  be applied below to primitive unimodular proper $\S$-adic subshifts.
We recall from \cite[Theorem 3.2, Corollary 3.6]{GHH:18} that there exists a one-to-one homomorphism $\Theta $ from $I (X,T)$ to $K^0 (X,T)$ such that, for $\alpha \in (0,1)\cap E (X,T)$, $\Theta (\alpha ) = [\chi_{U_{\alpha}}]$   where $U_\alpha$ is a clopen set,  sucht that $\mu (U_\alpha)=\alpha$ for  every
invariant measure $\mu$,   and  $ \chi_{U_{\alpha}} -\mu (U_{\alpha}) $ is a  real coboundary. Hence  $\chi_{U_{\alpha}}$ is balanced for $(X,T)$ (by Proposition \ref{prop:carBalanced}). 
 \begin{proposition}
\label{prop:balanced}
%\label{coro:equiv-balance}
Let $(X,T)$ be a  minimal  Cantor system. 
The following are equivalent:
\begin{enumerate}
\item
\label{item:1}
$(X,T)$ is balanced on some  $H\subset C(X,\mathbb{Z})$ and $\{ [h] : h \in H \}$ generates $K^0 (X,T)$,
\item
\label{item:2}
$(X,T)$ is balanced on $C( X, \mathbb{Z})$,
\item
\label{item:3}
$\Theta (E(X,T))$ generates $K^0 (X,T)$.
\end{enumerate}
In this case  $(X,T)$ is uniquely ergodic, then $I(X,T) = E(X,T)$ and ${\rm Inf } (K^0(X,T))$ is trivial. 
\end{proposition}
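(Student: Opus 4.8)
The plan is to prove the cyclic chain $(2)\Rightarrow(1)\Rightarrow(2)$ together with $(3)\Rightarrow(1)$ and $(2)\Rightarrow(3)$, and to extract the three final assertions (unique ergodicity, $I(X,T)=E(X,T)$, triviality of the infinitesimal subgroup) in the course of proving $(2)\Rightarrow(3)$. The implication $(2)\Rightarrow(1)$ is immediate upon taking $H=C(X,\mathbb{Z})$. For $(1)\Rightarrow(2)$, the observation I would use is that the functions balanced for $(X,T)$ form a subgroup of $C(X,\mathbb{Z})$ (by subadditivity of the defining bound) containing all coboundaries; since the Birkhoff sums of two cohomologous functions differ only by a bounded telescoping term, balance depends only on the class $[f]\in H(X,T)$. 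Hence the balanced classes form a subgroup of $H(X,T)$, and if a balanced family $\{[h]:h\in H\}$ generates $K^0(X,T)$, this subgroup must equal $H(X,T)$, so every $f\in C(X,\mathbb{Z})$ is balanced.

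For $(3)\Rightarrow(1)$, I would recall that each generator $\Theta(\alpha)=[\chi_{U_\alpha}]$, $\alpha\in(0,1)\cap E(X,T)$, is the class of a balanced function, as noted just before the statement, and that $\Theta(1)=[1]$. Thus the family $H=\{1\}\cup\{\chi_{U_\alpha}:\alpha\in(0,1)\cap E(X,T)\}$ consists of balanced functions whose classes generate $\Theta(E(X,T))$, hence generate $K^0(X,T)$, which is precisely condition $(1)$.

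The substantial direction is $(2)\Rightarrow(3)$. First I would establish unique ergodicity: balance of every $\chi_U$ with $U$ clopen forces, via Proposition~\ref{prop:carBalanced}, the value $\mu(U)$ to be independent of $\mu\in\mathcal{M}(X,T)$, and since clopen sets generate the Borel $\sigma$-algebra, $\mathcal{M}(X,T)$ is a singleton $\{\mu\}$. Next, $I(X,T)=E(X,T)$: the inclusion $E(X,T)\subseteq I(X,T)$ always holds, while in the uniquely ergodic case $I(X,T)=\{\int g\,d\mu:g\in C(X,\mathbb{Z})\}$ by Proposition~\ref{prop:GW}, and Lemma~\ref{lem:BalancedEigenvalue} places each such integral in $E(X,T)$. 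Then, given an arbitrary $[f]$, I would set $\alpha=\int f\,d\mu\in E(X,T)$ and pick $h\in C(X,\mathbb{Z})$ with $[h]=\Theta(\alpha)$ (a $\mathbb{Z}$-combination of $1$ and a single $\chi_{U_\beta}$), so that $h$ is balanced with $\int h\,d\mu=\alpha$. The target is the equality $[f]=[h]$, which gives $[f]\in\Theta(E(X,T))$ for every $f$, hence $(3)$; and, specializing to $\alpha=0$, it also yields that the infinitesimal subgroup is trivial.

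The hard part will be exactly this identity $[f]=[h]$. Setting $\phi:=f-h\in C(X,\mathbb{Z})$, which is balanced with $\int\phi\,d\mu=0$, Proposition~\ref{prop:carBalanced} gives that $\phi$ is a \emph{real} coboundary, $\phi=g\circ T-g$ with $g\in C(X,\mathbb{R})$; the obstacle is upgrading this to an \emph{integer} coboundary. I would resolve it by noting that $e^{2\pi i\phi}\equiv 1$, so that $e^{2\pi i g}$ is a continuous $T$-invariant function, hence constant equal to some $e^{2\pi i\theta}$ by minimality; therefore $k:=g-\theta$ is continuous and integer-valued, and $\phi=k\circ T-k$ is an integer coboundary, i.e. $[\phi]=0$. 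This is precisely where balance is indispensable: without it, $\int\phi\,d\mu=0$ would only force the Birkhoff \emph{averages} to vanish, not the Birkhoff \emph{sums} to stay bounded, so $\phi$ need not be a real coboundary and $[\phi]$ could be a nontrivial infinitesimal.
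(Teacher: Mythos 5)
Your proof is correct and follows essentially the same route as the paper: the easy implications are handled identically, and the substantial direction $(2)\Rightarrow(3)$ rests, as in the paper, on Proposition~\ref{prop:carBalanced}, Lemma~\ref{lem:BalancedEigenvalue} and the identification $[f]=\Theta\left(\int f\,d\mu\right)$, which also yields unique ergodicity, $I(X,T)=E(X,T)$ and the triviality of the infinitesimal subgroup. The only differences are organizational: where the paper decomposes $f$ into characteristic functions $\chi_{U_i}$ of clopen sets and cites \cite[Proposition 4.1]{Ormes:00} together with the characterization of $\Theta$ from \cite{GHH:18} for the fact that an integer-valued real coboundary is an integer coboundary, you compare $f$ with a single balanced representative $h$ of $\Theta(\alpha)$ and prove that upgrading step inline via the exponential-plus-minimality argument.
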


\begin{proof}
Let us prove that   \eqref{item:1} implies  \eqref{item:2}.
Let $f\in C (X,\mathbb{Z} )$.
One has $[f] = \sum_{i=1}^{n} z_i [h_i]$ for some integers $z_i$ and some functions $h_i\in H$.
Hence $f = g\circ T - g + \sum_{i=1}^{n} z_i h_i$ for some $g\in C(X,\mathbb{Z} )$ and $f$ is balanced. 
Consequently, $(X,T)$ is balanced on $C(X,\mathbb{Z})$.
It is immediate that \eqref{item:3}  implies \eqref{item:1}.

Let us  show  that \eqref{item:2} implies \eqref{item:3}.   Unique ergodicity holds by Proposition~\ref{prop:carBalanced}.
 Let $\mu$ be the unique  shift invariant  probability  measure of $(X,T)$.
For any $f\in C( X,\mathbb{Z})$, there are clopen sets $U_i$ and integers $z_{i}$ such that $f = \sum_{i=1}^n z_i \chi_{U_i}$. 
From Lemma \ref{lem:BalancedEigenvalue}  the values $ \mu(U_{i})\in I(X,T)$ are additive continuous eigenvalues in $E(X,T)$. 
We get $[\chi_{U_{i}}] = \Theta(\mu(U_{i}))$  and  $[f] = \sum_{i=1}^n z_i \Theta (\mu(U_{i}) )$.
This shows the claim \eqref{item:3}.

Assume that one of  the three  equivalent conditions holds.  
Let $\mu$ denote the unique shift invariant probability measure.
Then, any map $f - \int f d \mu $,  with $f\in C (X,\mathbb{Z} )$, is a real coboundary (by Proposition \ref{prop:carBalanced}). Hence, since any integer valued continuous  function that is a real coboundary is a  coboundary (\cite[Proposition 4.1]{Ormes:00}), the infinitesimal subgroup $ \mbox{\rm Inf} (K^0 (X,T))$ is trivial.
Moreover, Lemma \ref{lem:BalancedEigenvalue} implies that $I(X,T) \subset E(X,T)$.  
The reverse implication   $E(X,T) \subset I(X,T)$ comes from  \cite[Proposition 11]{CortDP:16}, see also   \cite[Corollary 3.7]{GHH:18}.
\end{proof}
 
Observe that when $(X,S)$ is a minimal subshift, by taking $H$ 
to be the set of classes of characteristic functions of cylinder sets, the balance
property  is equivalent to the algebraic condition \eqref{item:3} of  Proposition  \ref{prop:balanced}.

We  now  provide a topological  proof  of the fact that  the balance property on letters implies the balance property on  factors
for  primitive unimodular proper $\S$-adic subshifts.
For  minimal  dendric subshifts, this was already proved  in \cite[Theorem 1.1]{BerCecchi:2018}  using a combinatorial proof.

\begin{corollary} \label{cor:balanced}
Let $(X,S)$ be a  primitive  unimodular proper $\S$-adic subshift on a $d$-letter alphabet. 
The following are equivalent:
\begin{enumerate}
\item
\label{item:balancedmaps}
$(X,S)$ is balanced for all  integer valued   continuous maps in  $C (X , \mathbb{Z} )$,
\item
\label{item:balancedfactors}
$(X,S)$ is balanced on factors,
\item
\label{item:balancedletters}
$(X,S)$ is balanced on  letters, 
\item
\label{item:balancedrank}
$\mbox{\rm rank}  (E(X,S)) =d$,
\end{enumerate}
and in this case $(X,S)$ is uniquely ergodic, $I(X,S) = E(X,S)$ and $\mbox{\rm Inf} (X,S)$ is trivial. 
\end{corollary}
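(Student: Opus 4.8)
The plan is to prove Corollary~\ref{cor:balanced} by combining the general equivalences of Proposition~\ref{prop:balanced} with the explicit description of the dimension group from Theorem~\ref{theo:dg}, so that the abstract generation conditions become concrete statements about letter cylinders. First I would observe that the chain $\eqref{item:balancedmaps} \Leftrightarrow \eqref{item:balancedfactors} \Leftrightarrow \eqref{item:balancedletters}$ is essentially immediate once we unwind the terminology. Indeed, $\eqref{item:balancedmaps} \Leftrightarrow \eqref{item:balancedfactors}$ is exactly the statement recorded right after the definition of balance (a subshift is balanced on a generating set of $C(X,\mathbb{Z})$ iff it is balanced on factors iff every $f \in C(X,\mathbb{Z})$ is balanced), and $\eqref{item:balancedfactors} \Rightarrow \eqref{item:balancedletters}$ is trivial since letters are factors. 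The only substantive implication at this level is $\eqref{item:balancedletters} \Rightarrow \eqref{item:balancedmaps}$, and this is where Theorem~\ref{theo:cohoword} does the real work: by that theorem every $f \in C(X,\mathbb{Z})$ is cohomologous to an integer combination $\sum_{a \in \A} \alpha_a \chi_{[a]}$, and balance is preserved under adding coboundaries (the system is always balanced on coboundaries), so balance on the generating family $\{\chi_{[a]} : a \in \A\}$ propagates to all of $C(X,\mathbb{Z})$.

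The heart of the argument is the equivalence with $\eqref{item:balancedrank}$, and here I would route everything through Proposition~\ref{prop:balanced}. Taking $H = \{\chi_{[a]} : a \in \A\}$, condition~$\eqref{item:balancedletters}$ says precisely that $(X,S)$ is balanced on $H$; by Theorem~\ref{theo:cohoword} the classes $[\chi_{[a]}]$ generate $H(X,S)$ and hence $K^0(X,S)$, so condition~$\eqref{item:1}$ of Proposition~\ref{prop:balanced} holds. Thus $\eqref{item:balancedletters}$ is equivalent to condition~$\eqref{item:2}$ of that proposition, which in turn is equivalent to condition~$\eqref{item:3}$, namely that $\Theta(E(X,S))$ generates $K^0(X,S)$. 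It remains only to translate ``$\Theta(E(X,S))$ generates $K^0(X,S)$'' into ``$\mathrm{rank}(E(X,S)) = d$''. Under the hypotheses, Proposition~\ref{prop:balanced} guarantees unique ergodicity and $I(X,S) = E(X,S)$; since $\Theta$ is an injective homomorphism from $I(X,S)$ into $K^0(X,S) \cong \mathbb{Z}^d$ (by Theorem~\ref{theo:dg}), the image $\Theta(E(X,S))$ generates the free abelian group $\mathbb{Z}^d$ exactly when $E(X,S)$ has rank $d$.

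The step I expect to require the most care is this last translation, specifically verifying that generation of $K^0(X,S) \cong \mathbb{Z}^d$ by $\Theta(E(X,S))$ forces the rank of $E(X,S)$ to equal $d$, and conversely. For the forward direction, if $\Theta(E(X,S))$ generates $\mathbb{Z}^d$, then since $\Theta$ is injective and $E(X,S)$ is a subgroup of the image subgroup, the rank of $E(X,S)$ is at least $d$; but by Theorem~\ref{theo:dg} the ambient group has rank $d$, so equality holds. For the reverse direction, I would need that rank $d$ for $E(X,S)$ yields generation: here the subtlety is that a rank-$d$ subgroup of $\mathbb{Z}^d$ need not equal $\mathbb{Z}^d$, so one must argue that in the uniquely ergodic, saturated situation the trace $\tau_\mu$ identifies $K^0(X,S)$ with $I(X,S) = E(X,S)$ via the isomorphism of $K^0(X,S)/\mathrm{Inf}$ with the image subgroup recorded in Section~\ref{subsec:cyl}. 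Once infinitesimals are trivial this identification is exact, so full rank genuinely forces generation. I would therefore structure the argument so that triviality of $\mathrm{Inf}(K^0(X,S))$ is established first (it follows from either $\eqref{item:2}$ via Proposition~\ref{prop:balanced} or can be read off once unique ergodicity is known), and only then close the loop between $\eqref{item:balancedrank}$ and the generation statement. The final clause---unique ergodicity, $I(X,S) = E(X,S)$, and triviality of the infinitesimal subgroup---is then inherited directly from the concluding assertion of Proposition~\ref{prop:balanced}.
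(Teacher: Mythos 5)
Your handling of the equivalences \eqref{item:balancedmaps} $\Leftrightarrow$ \eqref{item:balancedfactors} $\Leftrightarrow$ \eqref{item:balancedletters} is fine, and your forward direction \eqref{item:balancedletters} $\Rightarrow$ \eqref{item:balancedrank} is correct and even slightly cleaner than the paper's (you read the rank directly off condition \eqref{item:3} of Proposition~\ref{prop:balanced} and injectivity of $\Theta$, where the paper routes through Proposition~\ref{theo:saturated}). The genuine gap is in the converse \eqref{item:balancedrank} $\Rightarrow$ \eqref{item:balancedmaps}. There you work ``in the uniquely ergodic, saturated situation'' with $I(X,S)=E(X,S)$, and identify $K^0(X,S)$ with $I(X,S)$ via $\tau_\mu$. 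But unique ergodicity, triviality of $\mathrm{Inf}(K^0(X,S))$ and $I(X,S)=E(X,S)$ are exactly the \emph{conclusions} of Proposition~\ref{prop:balanced}, i.e., consequences of the balance conditions \eqref{item:balancedmaps}--\eqref{item:balancedletters} you are trying to deduce from \eqref{item:balancedrank}; invoking them at this stage is circular. Your fallback --- that triviality of the infinitesimal subgroup ``can be read off once unique ergodicity is known'' --- is simply false: the dendric subshift of Section~\ref{ex:nontrivial} is strictly ergodic yet has a non-trivial infinitesimal subgroup. Finally, even granting unique ergodicity and saturation, your identification only yields that $\Theta(E(X,S))$ is a full-rank, hence finite-index, subgroup of $K^0(X,S)$; to upgrade this to generation you would need $E(X,S)=I(X,S)$, which is again part of what is to be proved. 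So ``full rank genuinely forces generation'' is not established by your argument.

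The paper's proof of \eqref{item:balancedrank} $\Rightarrow$ \eqref{item:balancedmaps} sidesteps all of this by never aiming at generation over $\mathbb{Z}$. Choose rationally independent $\alpha_1,\dots,\alpha_d \in E(X,S)\cap (0,1)$. By the properties of $\Theta$ recalled before Proposition~\ref{prop:balanced}, each $\Theta(\alpha_i)=[\chi_{U_{\alpha_i}}]$ where $\chi_{U_{\alpha_i}}-\alpha_i$ is a real coboundary, so $\chi_{U_{\alpha_i}}$ is balanced; moreover every trace sends $\Theta(\alpha_i)$ to $\alpha_i$, so the classes $\Theta(\alpha_1),\dots,\Theta(\alpha_d)$ are rationally independent in $K^0(X,S)\cong \mathbb{Z}^d$. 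Since $K^0(X,S)$ is torsion-free of rank $d$ (Theorem~\ref{theo:cohoword}), every class $[f]$ is a rational linear combination of the $\Theta(\alpha_i)$: there exist $N\geq 1$ and $z_i \in \mathbb{Z}$ with $N[f]=\sum_i z_i \Theta(\alpha_i)$. Hence $Nf$ differs from the balanced function $\sum_i z_i \chi_{U_{\alpha_i}}$ by a coboundary, so $Nf$ is balanced, and dividing the uniform bound on the ergodic sums by $N$ shows that $f$ itself is balanced. The key trick you are missing is that balance is insensitive to positive integer multiples, so a family of balanced classes spanning a finite-index subgroup already suffices; generation of $K^0(X,S)$ by $\Theta(E(X,S))$, together with unique ergodicity, saturation and $I(X,S)=E(X,S)$, then come out as conclusions rather than being needed as inputs.
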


\begin{proof}
The  implications  $\eqref{item:balancedmaps} \Rightarrow \eqref{item:balancedfactors}\Rightarrow \eqref{item:balancedletters}$  are immediate. 
Let us prove  the  implication $\eqref{item:balancedletters} \Rightarrow \eqref{item:balancedrank}$.  
We deduce from Theorem \ref{theo:cohoword}, by taking $H$ 
to be the set of classes of characteristic functions of cylinder sets,  that the conditions of  Proposition \ref{prop:balanced} hold.
We deduce from Proposition \ref{theo:saturated} that  \eqref{item:balancedrank} holds.

It remains to prove   the  implication $\eqref{item:balancedrank} \Rightarrow \eqref{item:balancedmaps}$. Suppose that  $E(X,S)$ has rank  $d$.
 Let $\alpha_1 , \dots , \alpha_d\in E(X,S)$ be rationally independent. 
 There is no restriction to assume   that they are all in $(0,1)$. 
Consider, for $i =1\cdots,d$,  $\Theta (\alpha_i ) = [\chi_{U_{\alpha_i}}]$   where $U_{\alpha_{i}}$ is a clopen set such that  $\mu (U_{\alpha_{i}} ) = \alpha_i$ for any $S$-invariant measure $\mu \in {\mathcal M}(X,S)$ and $\chi_{U_{\alpha_{i}}}$ is balanced for $(X,S)$.
The  classes  $\Theta (\alpha_i)$'s are rationally independent because
the image   of $\Theta (\alpha_i)$ by any trace  is $\alpha_{i}$ and  these values are assumed to be rationally independent. 
% integrating  any rational linear combination  of the  $\chi_{U_{\alpha_{i}}}$ with respect to a $S$-invariant measure $\mu$, leads to a linear combina
%
% Consider  indeed  a  .  Take   the corresponding ergodic sums  for any ergodic  invariant measure $\mu$.
%By taking  the limit  for a $\mu$-generic point, this will lead  a rational linear   relation between the $\alpha_i$'s.
As $K^0 (X,S)$ has rank $d$, by Theorem \ref{theo:cohoword},  and  since it has no torsion (as recalled in Section~\ref{subsec:dim}), any element $[f]\in K^0 (X,S)$ is a rational linear combination of the $\Theta (\alpha_i )$'s. By Proposition \ref{prop:balanced}, any $f\in C(X, \Z )$ is balanced for $(X,S)$.
 \end{proof}
 
 \begin{remark}We deduce that primitive unimodular proper $\S$-adic subshifts  that are  balanced on letters have  the maximal  continuous eigenvalue  group property,  as defined in \cite{Durand&Frank&Maass:2019}, i.e.,  $E(X,S)= I(X,S)$.  This implies  in particular   that  non-trivial additive eigenvalues are irrational.
 Indeed,   by Corollary \ref{cor:balanced},  $\mbox{Inf} (K^0 (X,S))$ is trivial and  thus, by  Proposition \ref{theo:saturated},  the   frequencies of letters  (for the unique
 shift-invariant measure) are rationally independent,  which yields  that  $I(X,S)$ and  thus $E(X,S)$ contain no rational non-trivial elements. 
 
More generally, the fact that  non-trivial additive eigenvalues are irrational hold  for minimal dendric subshifts (even without 
the  balance property)  \cite{Rigidity}.  Note also  that the triviality of  $\mbox{Inf} (K^0 (X,S))$  says nothing  about  the balance property (see Example \ref{ex:balance}),
but the existence of non-trivial infinitesimals indicates that some letter is not balanced.
Lastly, the Thue--Morse substitution  $\sigma \colon a \mapsto ab, b \mapsto ba$  generates 
 a   subshift that is  balanced on letters but not on factors \cite{BerCecchi:2018}.  This substitution is neither  unimodular,  nor proper.
 \end{remark}

%%%%%%%%%%%%%%%%%%%%%%%%%%%%%%%% 
\section{Examples  and observations }\label{sec:examples}
%%%%%%%%%%%%%%%%%%%%%%%%%%%%%%%%

%\input{section5.tex}

%%%%%%%%%%%%%%%%%%%%
\subsection{Brun subshifts}\label{subsec:Brun}
%%%%%%%%%%%%%%%%%%%%
We  provide a family of  primitive unimodular proper $\S$-adic subshifts which are  not dendric.
We consider the set of endomorphisms $S_\mathrm{Br} = \{\beta_{ab} \mid a \in \mathcal{A},\, b \in \mathcal{A} \setminus \{a\}\}$ over $d$~letters defined by 
\[
\beta_{ab}:\ b \mapsto ab,\ c\mapsto c\ \text{for}\ c \in \mathcal{A} \setminus \{ b\}.
\]

A subshift  $(X,S)$ is a \emph{Brun subshift} if it is generated by a primitive directive sequence $\boldsymbol{\tau } = (\tau_n)_n \in S_\mathrm{Br}^\mathbb{N}$ 
such that for all $n$ the endomorphism $\tau_n\tau_{n+1}$ belongs to
\begin{align*}
\big\{\beta_{ab}\beta_{ab} \mid a \in \mathcal{A},\, b \in \mathcal{A} \setminus \{a\}\big\}  
\cup 
\big\{\beta_{ab}\beta_{bc} \mid a \in \mathcal{A},\, b \in \mathcal{A} \setminus \{a\},\, c \in \mathcal{A} \setminus \{b\}\big\}.
\end{align*}
Observe that primitiveness of $\boldsymbol{\tau }$ is equivalent to the fact that for each $a \in \mathcal{A}$ there is $b \in \mathcal{A}$ such that $\beta_{ab}$ occurs infinitely often in $\boldsymbol{\tau }$. 
Brun subshifts are not dendric in general: on a three-letter alphabet, they may contain strong and weak bispecial factors, hence that have an extension graph which is not a tree~\cite{Labbe&Leroy:16}. 
However, we show below that they are primitive unimodular proper $\S$-adic subshifts.

\begin{lemma}
\label{lemma:brunproper}
Let $\mathcal{A}$ be a finite alphabet and $\gamma_{ab} : \mathcal{A} \to \mathcal{A}$, $a\not = b$, be  the  letter-to-letter map  defined by $\gamma_{ab} (a) = \gamma_{ab} (b) = a$ and $\gamma_{ab} (c)  =c$ for $c\in \mathcal{A}\setminus \{ a,b\}$.
Let $(a_n)_{1\leq n\leq N}$  be such that $\{ a_n \mid 1\leq n \leq N \} = \mathcal{A}$.
Then, $\gamma_{a_1 a_{2}} \gamma_{a_2 a_{3}} \cdots  \gamma_{a_{N-1} a_{N}}$ is constant.
\end{lemma}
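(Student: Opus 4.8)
The plan is to prove the stronger statement that the composition equals the constant map with value $a_1$, by induction on the length of the prefix composition. I first fix the composition convention: following the paper's convention for morphisms (where $\tau_{[n,N)} = \tau_n \circ \cdots \circ \tau_{N-1}$), juxtaposition denotes ordinary composition, so I read $\gamma_{a_1 a_2} \gamma_{a_2 a_3} \cdots \gamma_{a_{N-1} a_N}$ as $\gamma_{a_1 a_2} \circ \gamma_{a_2 a_3} \circ \cdots \circ \gamma_{a_{N-1} a_N}$, with the rightmost map applied first. With the opposite convention the statement is false, as one checks on $\mathcal{A} = \{1,2,3\}$ with $(a_n) = (1,2,3)$, so getting the direction right is essential.

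For $1 \le k \le N$ set $\Phi_k = \gamma_{a_1 a_2} \circ \cdots \circ \gamma_{a_{k-1} a_k}$ (with $\Phi_1$ the empty composition, i.e.\ the identity), so that $\Phi_{k+1} = \Phi_k \circ \gamma_{a_k a_{k+1}}$ and $\Phi_N$ is the map in the statement. I would prove by induction on $k$ the invariant
\[
\Phi_k(x) = a_1 \text{ for every } x \in \{a_1, \dots, a_k\}, \qquad \Phi_k(x) = x \text{ for every } x \notin \{a_1, \dots, a_k\}.
\]
The base case $k = 1$ is immediate. For the inductive step I use that $\gamma_{a_k a_{k+1}}$ sends both $a_k$ and $a_{k+1}$ to $a_k$ and fixes every other letter, and I compute $\Phi_{k+1}(x) = \Phi_k(\gamma_{a_k a_{k+1}}(x))$ by cases: if $x \in \{a_k, a_{k+1}\}$ then $\gamma_{a_k a_{k+1}}(x) = a_k \in \{a_1, \dots, a_k\}$, so the induction hypothesis gives $\Phi_k(a_k) = a_1$; otherwise $\gamma_{a_k a_{k+1}}(x) = x$ and the hypothesis applies directly to $x$, splitting according to whether $x$ lies in $\{a_1,\dots,a_k\}$ or not. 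In each case the invariant for $\Phi_{k+1}$ follows.

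Taking $k = N$ and using the hypothesis $\{a_1, \dots, a_N\} = \mathcal{A}$, the invariant gives $\Phi_N(x) = a_1$ for every $x \in \mathcal{A}$, so the composition is constant. The computation is elementary; the only point requiring care is that the sequence $(a_n)$ need not be injective, so a letter $a_j$ with $j < k$ may coincide with $a_k$ or with $a_{k+1}$. The case analysis above is arranged so that it remains valid under such repetitions, and this robustness to repeated letters is the main (modest) obstacle to a careless argument.
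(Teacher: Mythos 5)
Your proof is correct and follows essentially the same route as the paper: the inductive invariant you establish (the prefix composition sends $a_1,\dots,a_k$ to $a_1$ and fixes every other letter) is precisely the paper's one-line observation that $\gamma_{a_m a_{m+1}} \gamma_{a_{m+1} a_{m+2}} \cdots \gamma_{a_{n-1} a_{n}}$ identifies the letters $a_m,\dots,a_n$ to $a_m$, which you have simply made explicit by induction. Your added care about the composition order and about repetitions in the sequence $(a_n)$ is sound, but it elaborates rather than changes the paper's argument.
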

\begin{proof}
It suffices to observe that $\gamma_{a_m a_{m+1}} \gamma_{a_{m+1} a_{m+2}} \cdots  \gamma_{a_{n-1} a_{n}}$ identifies the letters $a_m, a_{m+1}, a_{m+2}  \dots , a_n$ to $a_m$.
\end{proof}

\begin{lemma} 
Brun subshifts are primitive unimodular proper $\S$-adic subshifts.
\end{lemma}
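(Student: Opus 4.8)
The plan is to check the three defining properties separately, the only delicate one being properness. Unimodularity is immediate: each incidence matrix $M_{\beta_{ab}}$ is the identity matrix with one extra $1$ in position $(a,b)$, hence has determinant $1$, and any finite product of such matrices is again unimodular. Primitivity is part of the hypothesis. So it remains to produce a primitive unimodular \emph{proper} directive sequence generating the same subshift; for this I would first make the sequence left proper by grouping consecutive morphisms, and then invoke Lemma~\ref{lemma:proper}.

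The key observation is that the first letter of $\beta_{ab}(c)$ is governed exactly by the letter-to-letter map $\gamma_{ab}$ of Lemma~\ref{lemma:brunproper}: indeed $\beta_{ab}(b)=ab$ begins with $a$ while $\beta_{ab}(c)=c$ begins with $c$ for $c\neq b$, so the map $c\mapsto(\text{first letter of }\beta_{ab}(c))$ is precisely $\gamma_{ab}$. Since first-letter maps compose functorially, the first-letter map of $\tau_{[n,N)}=\tau_n\circ\cdots\circ\tau_{N-1}$ equals $\gamma_{\tau_n}\circ\cdots\circ\gamma_{\tau_{N-1}}$, a composition of maps of type $\gamma_{ab}$. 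Writing $\tau_i=\beta_{a_i b_i}$, the Brun condition on the pairs $\tau_i\tau_{i+1}$ forces, for each $i$, either $(a_{i+1},b_{i+1})=(a_i,b_i)$ (the case $\beta_{ab}\beta_{ab}$) or $a_{i+1}=b_i$ (the case $\beta_{ab}\beta_{bc}$). Deleting consecutive repetitions, which does not change the composite since $\gamma_{ab}\gamma_{ab}=\gamma_{ab}$, the composite over $[n,N)$ reduces to a product $\gamma_{c_1c_2}\gamma_{c_2c_3}\cdots\gamma_{c_{m-1}c_m}$ of exactly the form treated in Lemma~\ref{lemma:brunproper}, whose index set $\{c_1,\dots,c_m\}$ is the set of letters occurring as some $a_i$ or $b_i$ in the window; in particular it contains every letter appearing as a first index.

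Now I would use primitivity in the form recalled just after the definition of Brun subshifts: for each $a\in\mathcal{A}$ there is $b$ with $\beta_{ab}$ occurring infinitely often, so every letter of $\mathcal{A}$ occurs infinitely often as a first index $a_i$. Hence one can choose a strictly increasing sequence $1=n_1<n_2<\cdots$ such that, for every $k$, the window $[n_k,n_{k+1})$ contains each letter of $\mathcal{A}$ as a first index and $M_{\tau_{[n_k,n_{k+1})}}$ is positive; both requirements hold once the window is long enough, by finiteness of $\mathcal{A}$ and primitivity. By Lemma~\ref{lemma:brunproper}, the first-letter map of $\sigma_k:=\tau_{[n_k,n_{k+1})}$ is then constant, i.e. all images $\sigma_k(c)$ start with the same letter, so each $\sigma_k$ is left proper. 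The grouped sequence $\boldsymbol{\sigma}=(\sigma_k)_k$ is unimodular (a product of unimodular matrices), primitive (by the positivity arrangement), and satisfies $\sigma_{[1,k)}=\tau_{[1,n_k)}$, so its level-$1$ language, and hence the subshift it generates, coincides with that of $\boldsymbol{\tau}$. Applying Lemma~\ref{lemma:proper} to the primitive left proper unimodular sequence $\boldsymbol{\sigma}$ yields a primitive unimodular proper directive sequence generating the same subshift, which is the desired conclusion.

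I expect the main obstacle to be the bookkeeping in the middle step: one must match the composition order of the first-letter maps with the product in Lemma~\ref{lemma:brunproper}, and verify that the Brun constraint on consecutive pairs really turns the reduced composite into a single chain $\gamma_{c_1c_2}\cdots\gamma_{c_{m-1}c_m}$ rather than an arbitrary product of $\gamma$'s, so that Lemma~\ref{lemma:brunproper} applies and constancy holds precisely once all letters have been seen as first indices.
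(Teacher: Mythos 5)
Your proof is correct and follows essentially the same route as the paper: associate to each $\beta_{ab}$ its first-letter map $\gamma_{ab}$, use primitiveness to choose windows in which every letter occurs as a first index, apply Lemma~\ref{lemma:brunproper} to get left properness of the grouped morphisms, and finish with Lemma~\ref{lemma:proper}. Your explicit verification of the chain structure forced by the Brun condition (and the idempotence $\gamma_{ab}\gamma_{ab}=\gamma_{ab}$) is exactly the bookkeeping the paper leaves implicit.
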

\begin{proof}
Let $(X,S)$ be a Brun subshift over the alphabet $\mathcal{A}$, generated by the directive sequence $\boldsymbol{\beta } = (\beta_{a_nb_n})_{n\geq 1} \in S_\mathrm{Br}^\mathbb{N}$.
With each endomorphism $\beta_{ab}$ one can associate the map $\gamma : \mathcal{A} \to \mathcal{A}$ defined by $\gamma (c) = \beta (c)_0$.
Clearly $\gamma $ is equal to $\gamma_{ab}$.

By primitiveness, there exists an increasing sequence of integers $(n_k)_k$, with $n_0 =0$, such that $\{ a_i \mid n_k \leq i < n_{k+1} \} = \mathcal{A}$ for all $k$.
Hence from Lemma \ref{lemma:brunproper} the morphisms $\boldsymbol{\beta}_{[n_k , n_{k+1})}$ are left proper. 
We conclude by  using Lemma~\ref{lemma:proper}.%~\cite[Corollary~2.3]{Durand&Leroy:2012}.
\end{proof}

As a corollary, we recover the following result (which also follows from~\cite[Theorem~5.7]{Berthe&Delecroix:14}).

\begin{proposition}\label{prop:UE}
Brun subshifts are uniquely ergodic.
\end{proposition}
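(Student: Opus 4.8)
The plan is to reduce unique ergodicity to the weak convergence of the nested cone of letter measures, and then to obtain this convergence from primitivity together with a projective contraction estimate.

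First I would recall the relation between invariant measures and the incidence matrices established inside the proof of Theorem~\ref{theo:cohoword}. There, $\chi_{[a]}$ is shown to be cohomologous to $\sum_{b\in\A}(M_{\tau_{[1,n]}})_{a,b}\,\chi_{\tau_{[1,n]}([b])}$; integrating against any $\mu\in\mathcal{M}(X,S)$ gives
\[
	\mu([a]) = \sum_{b\in\A}(M_{\tau_{[1,n]}})_{a,b}\,\mu\bigl(\tau_{[1,n]}([b])\bigr),
\]
that is, $\boldsymbol{\mu} = M_{\tau_{[1,n]}}\boldsymbol{\nu}^{(n)}$ with $\boldsymbol{\nu}^{(n)} = (\mu(\tau_{[1,n]}([b])))_{b\in\A}$ having non-negative entries. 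Hence the letter-measure vector of every invariant measure lies in the nested cone $C = \bigcap_{n\geq 1} M_{\tau_{[1,n]}}\,\mathbb{R}^d_{\geq 0}$. Since the cylinders $[a]$, $a\in\A$, partition $X$, each $\boldsymbol{\mu}$ is a probability vector, and by Corollary~\ref{coro:measures} the map $\mu\mapsto\boldsymbol{\mu}$ is injective on $\mathcal{M}(X,S)$. Consequently, if $C$ is a single ray, it meets the probability simplex in exactly one point, forcing $\boldsymbol{\mu}$ and hence $\mu$ to be unique; so it suffices to prove that $C$ is one-dimensional.

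Second, I would establish that $C$ is a single ray via Birkhoff's contraction theorem. Each Brun matrix is $M_{\beta_{ab}} = I + E_{a,b}$, where $E_{a,b}$ is the matrix unit with a $1$ in position $(a,b)$, and it preserves $\mathbb{R}^d_{\geq 0}$. By primitiveness one extracts an increasing sequence $(n_k)_{k\geq 0}$ with $n_0 = 0$ such that each block $B_k := M_{\tau_{[n_k,n_{k+1})}}$ is strictly positive. By the Birkhoff--Hopf theorem, a strictly positive matrix contracts the Hilbert projective metric on the interior of $\mathbb{R}^d_{\geq 0}$ by a factor $\tanh(\Delta_k/4) < 1$, where $\Delta_k$ is the projective diameter of the image of $B_k$. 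The product $M_{\tau_{[1,n_k]}} = B_0 B_1 \cdots B_{k-1}$ then shrinks the positive cone to projective diameter at most $\prod_{j<k}\tanh(\Delta_j/4)$ times a constant, and once this tends to $0$ the cone $C$ collapses to a single ray, which yields unique ergodicity.

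The main obstacle is precisely the uniform control of the diameters $\Delta_k$: a general primitive sequence of elementary matrices need not be weakly convergent, and it is exactly the combinatorial constraint $\tau_n\tau_{n+1}\in\{\beta_{ab}\beta_{ab}\}\cup\{\beta_{ab}\beta_{bc}\}$ in the definition of a Brun subshift that must be exploited, through a bound on the cross-ratios $\frac{(B_k)_{ip}(B_k)_{jq}}{(B_k)_{iq}(B_k)_{jp}}$, to keep the contraction coefficients bounded away from $1$. Alternatively, this estimate can be bypassed altogether by invoking the classical weak convergence of Brun's continued fraction algorithm, from which unique ergodicity follows exactly as in~\cite[Theorem~5.7]{Berthe&Delecroix:14}.
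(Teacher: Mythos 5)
Your proof, in the form you actually complete it, is essentially the paper's own: the reduction via Corollary~\ref{coro:measures} to showing that the simplex of letter measures is a single point, followed by an appeal to the classical weak convergence of Brun's algorithm (the paper cites \cite[Theorem~3.5]{Brentjes:81} for this; your citation of \cite[Theorem~5.7]{Berthe&Delecroix:14} is the alternative the paper itself mentions just before the proposition). The Birkhoff contraction route you sketch first is not a proof as it stands: the uniform control of the projective diameters $\Delta_k$ is exactly the content of weak convergence and cannot come from primitivity alone, as shown by the paper's own example in Section~\ref{subsection:vs} of a primitive unimodular proper $\S$-adic subshift with two ergodic measures. Since you flag that gap yourself and fall back on the citation, the argument as submitted is correct and coincides with the paper's.
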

\begin{proof}
This follows from Corollary~\ref{coro:measures} and from the fact that Brun subshifts have a simplex of letter measures generated by a single vector (see~\cite[Theorem~3.5]{Brentjes:81}).
\end{proof}

Brun  subshifts have been introduced in \cite{BST:2019} in order to provide  symbolic models for  two-dimensional toral  translations.
In particular,  they   are proved to  have  generically pure discrete spectrum in  \cite{BST:2019}.

\subsection{Arnoux-Rauzy  subshifts} \label{subsec:AR}
%We provide examples of  applications of  Corollary~\ref{coro:measures} to unique ergodicity. 

A minimal subshift $(X,S)$ over $ \mathcal{A}= \{1,2,\ldots,d\}$ is an \emph{Arnoux-Rauzy  subshift} if  for all~$n$ it has $(d-1)n+1$ factors of length~$n$, with exactly one left special and one right special factor of length~$n$. 
Consider the following set of endomorphisms defined on the alphabet $\mathcal{A} = \{1, \dots , d \}$, namely 
$S_\mathrm{AR} = \{ \alpha_a \mid a \in \mathcal{A}\}$ with
\[
	\alpha_a:\ a \mapsto a,\ b \mapsto ab\ \mbox{for}\ b \in \mathcal{A} \setminus \{a\}.
\]

A subshift $(X,S)$ generated by a primitive directive sequence $\boldsymbol{\tau } \in S_\mathrm{AR}^\mathbb{N}$ is called an  \emph{Arnoux-Rauzy subshift}.
It is standard to check  that primitiveness of $\boldsymbol{\alpha }$ is equivalent to the fact that each morphism $\alpha_a$ occurs infinitely often in $\boldsymbol{\alpha }$. 
Arnoux-Rauzy subshifts being dendric subshifts, they are in particular primitive unimodular proper $\S$-adic subshifts.
We similarly recover, as in  Proposition~\ref{prop:UE}, that   Arnoux-Rauzy  subshifts are uniquely ergodic (see~\cite[Lemma 2]{Delecroix&Hejda&Steiner:2013} for
the fact that  Arnoux-Rauzy subshifts  have a simplex of letter measures generated by a single vector).
%%%%%%%%%%%%%%%%%%%%%%%%%%%%%%%%%%%%%%%%%%%%%
\subsection{A dendric subshift with non-trivial infinitesimals} \label{ex:nontrivial} 
%%%%%%%%%%%%%%%%%%%%%%%%%%%%%%%%%%%%%%%%%%%%%
Let us provide an example of a  minimal  dendric subshift with  non-trivial  infinitesimal subgroup, and thus with  rationally  dependent letter  measures according to
Proposition \ref{theo:saturated}.
We take the interval exchange $T$ with permutation $(1,3,2)$ with intervals $[0,1-2 \alpha) , [1-2 \alpha, 1- \alpha), $  and $[1-\alpha,1)$, with $\alpha=(3-\sqrt{5})/2$.
The transformation $T$ is represented in Figure~\ref{figure3interval}, with 
$I_{1}=[0,1-2\alpha), $ $I_{2}=[1-2\alpha,1-\alpha)$, $ I_{3}=[1-\alpha,1)$ 
and $
J_{1}=[0,\alpha)$, $J_{2}=[\alpha,2\alpha)$, $J_{3}=[2\alpha,1).$

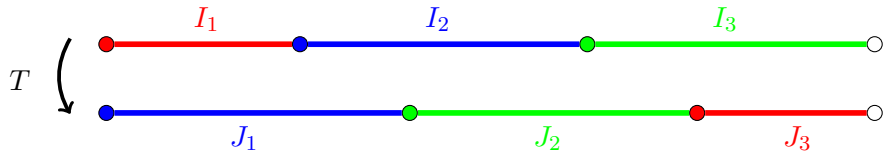
\begin{figure}[hbt]
 \tikzset{node/.style={circle,draw,minimum size=0.2cm,inner sep=0pt}}
 \tikzset{title/.style={minimum size=0.5cm,inner sep=0pt}}
 \begin{center}
  \begin{tikzpicture}
   \node[node,fill=red](0h) {};
   \node[node,fill=blue](1h) [right = 2.36cm of 0h] {};
   \node[node,fill=green](2h) [right = 6.18cm of 0h] {};
   \node[node](3h) [right = 10cm of 0h] {};
   \node[node,fill=blue](0b) [below= 0.7cm of 0h] {};
   \node[node,fill=green](1b) [right = 3.82cm of 0b] {};
   \node[node,fill=red](2b) [right = 7.64cm of 0b] {};
   \node[node](3b) [right = 10cm of 0b] {};
   \draw[line width=0.75mm, red] (0h) edge node {} (1h);
   \draw[line width=0.75mm, blue] (1h) edge node {} (2h);
   \draw[line width=0.75mm, green] (2h) edge node {} (3h);
   \draw[line width=0.75mm, blue] (0b) edge node {} (1b);
   \draw[line width=0.75mm, green] (1b) edge node {} (2b);
   \draw[line width=0.75mm, red] (2b) edge node {} (3b);
   \color{black}
   \node[title](aa) [above right = 0cm and 1cm of 0h,red] {$I_1$};
   \node[title](ba) [above right = 0cm and 1.5cm of 1h,blue] {$I_2$};
   \node[title](ca) [above right = 0cm and 1.5cm of 2h,green] {$I_3$};
   \node[title](cb) [below right = 0cm and 1.5cm of 0b,blue] {$J_1$};
   \node[title](bb) [below right = 0cm and 1.5cm of 1b,green] {$J_2$};
   \node[title](ab) [below right = 0cm and 1cm of 2b,red] {$J_3$};
   
   \node[title](a) [above left = 0cm and 0cm of 0h] {};
   \node[title](b) [below = 1cm of a] {};
   \node[title](T) [below left = 0.3cm and 0.3 of a] {$T$};
   \draw[line width=0.5mm, bend right, ->] (a) edge node {} (b);
  \end{tikzpicture}
 \end{center}

 \caption{The transformation $T$.}
 \label{figure3interval}
\end{figure}

Measures  of letters are rationally dependent and  the  natural coding of this   interval exchange is  a strictly ergodic  dendric  subshift $(X,S,\mu)$ by Theorem \ref{theo:ergodic}.
It is actually a representation on $3$ intervals of the rotation of angle $2\alpha$
(the point $1-\alpha$ is a separation point which is not a singularity of  this interval exchange).
 One has
$\mu([2])=\mu([3])$.
The  class of the function   $\chi_{[2]}- \chi_{[3]}$  is  thus a   non-trivial infinitesimal, according to Theorem \ref{theo:cohoword}.

%%%%%%%%%%%%%%%%%%%%%%%%%%%%%%%%%%
\subsection{Dendric subshifts  having  the same dimension group and different spectral properties  } \label{ex:balance}

%%%%%%%%%%%%%%%%%%%%%%%%%%%%%%%%%%
It is well known that within any given class of  strong orbit equivalence (i.e.,  by Theorem \ref{oe},   within any  family of minimal Cantor systems sharing the same dimension group $(G,G^+,u)$), all minimal Cantor systems share the same set  of rational   additive   continuous eigenvalues $E(X,T)\cap \mathbb{Q}$ \cite{Ormes:97}. 
When this set is reduced to $\{ 0 \}$, then, in the strong orbit equivalence class of $(X,T)$, there are many weakly mixing systems, see \cite[Theorem 6.1]{Ormes:97}, \cite[Theorem 5.4]{GHH:18} or \cite[Corollary 23]{Durand&Frank&Maass:2019}.

We provide here  an  example of a  strong orbit equivalence class  that  contains two minimal dendric subshifts, one being  weakly mixing and the other one  having pure discrete spectrum.  %Note  that   the  non-zero continuous eigenvalues of  minimal   dendric subshifts    are  irrational \cite{Rigidity}. 
Both systems  are  saturated (they have no non-trivial  infinitesimals)  but  they have different balance properties.
They  are defined on  a  three-letter  alphabet and  have factor complexity $2n + 1$.
According to Theorem \ref{theo:ergodic},  they  are uniquely ergodic. 
From Corollary~\ref{cor:oe}, two  minimal  dendric  subshifts on a three-letter alphabet  are strong orbit equivalent if and only if there is a unimodular row-stochastic matrix $M$ sending the vector of letter measures of one subshift to the vector of letter measures of the other.
In particular, any Arnoux-Rauzy subshift is strong orbit equivalent to any natural coding of an i.d.o.c. exchange of three intervals for which the length of the intervals are given by the letter measures of the Arnoux-Rauzy subshift (recall that an interval exchange transformation satisfies the {\it infinite distinct orbit condition}, i.d.o.c. for short, if the negative trajectories of the discontinuity points are infinite disjoint sets; this condition  implies minimality \cite{Keane:75}). We  thus consider the subshift $(X,S)$ generated by the Tribonacci substitution
$\sigma \colon a \mapsto ab, b \mapsto ac, c \mapsto a$ which is uniquely ergodic, dendric, balanced  and has discrete spectrum \cite{Rauzy:1982}.
Let $\mu$ be its unique invariant measure. 
We also consider the natural coding $(Y,S)$ of the three-letter  interval exchange defined on intervals of length $\mu[a]$,  $\mu[b]$,  $\mu[c]$ with permutation $(13)(2)$.
It is uniquely ergodic, topologically weakly mixing  \cite{KatokStepin,FerHol:2004} and strong orbit equivalent to $(X,S)$ by Proposition \ref{prop:DendricareSadic}. 
Hence,  for spectral reasons,  $(X,S)$ and  $(Y,S)$  
 are not  topologically conjugate, even if they are strong orbit equivalent.

We    provide  a further proof of   non-conjugacy   for  the systems $(X,S)$ and $(Y,S)$   based on asymptotic pairs.
We first recall    a few  definitions.
Two  points $x,y $ in a given subshift  are  said to be  {\em right asymptotic}  if  they have  a common tail, i.e., 
 there exists $n $ such that  $(x_k)_{k \geq n}= (y_k)_{k \geq n}$.
 This defines an equivalence  relation   on  the collection of orbits:
 two $S$-orbits  ${\mathcal O}_S(x) = \{ S^n x \mid n\in \mathbb{Z} \} $ and   ${\mathcal O}_S(y)$   are  asymptotically equivalent 
 if  for  any $x' \in {\mathcal O}_S(x)$, there is $y' \in  {\mathcal O}_S(y)$ that is  right asymptotic to $x'$.
We call  {\em asymptotic component} any equivalence  class  under the asymptotic  equivalence. 
We say that  it is {\em non-trivial} whenever it is not reduced to one orbit.

An Arnoux-Rauzy  subshift $(X,S)$ has a unique non-trivial asymptotic component formed  of three distincts orbits as, for all $n$, there is a unique word $w$ of length $n$ such that $\ell(w) \geq 2$ and this word is such that $\ell(w)=3$ (see Section \ref{subsec:tree} for the notation).
On the other side, any i.d.o.c. exchange of  three-intervals $(Y,S)$  has  2 asymptotic components and thus cannot be   conjugate to $(X,S)$.
Indeed, suppose  that it has a unique non-trivial asymptotic component. 
As a natural coding of an i.d.o.c interval exchange transformation has two left special factors for each large enough length, 
this component should contain three sequences $x'x$, $ x''ux$ and $x''' ux$ belonging to $Y$ where $u$ is a non-empty word. %\marginpar{ pas compris} 
This would imply that  the interval exchange transformation is not i.d.o.c.

Next statement illustrates  the  variety of spectral behaviours  within  strong orbit equivalence classes of dendric   subshifts. 
 \begin{proposition}\label{prop:realization}
For Lebesgue a.e. probability vector   $\boldsymbol{ \mu}$   in ${\mathbb R}^3_+$,
there  exist   two strictly ergodic  proper unimodular $\S$-adic  subshifts, one     with  pure    discrete spectrum and  another one   which  is   weakly mixing, both having the same dimension group 
  $\left( {\mathbb Z} ^3, \,  \{ {\bf x} \in {\mathbb Z}^3 \mid \langle {\bf x},  \boldsymbol{ \mu} \rangle > 0 \}\cup \{{\mathbf 0}\},\,  \bf{1}\right ).$
  \end{proposition}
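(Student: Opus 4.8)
The plan is to realize the two required systems through two different continued-fraction-type constructions, both available for Lebesgue-almost every $\boldsymbol{\mu}$: a symmetric three-interval exchange transformation for the weakly mixing example, and a Brun subshift for the pure discrete spectrum example. The guiding principle is Theorem~\ref{theo:dg}: a strictly ergodic primitive unimodular proper $\S$-adic subshift over three letters has dimension group $\left({\mathbb Z}^3, \{{\bf x} : \langle {\bf x}, \boldsymbol{\nu}\rangle > 0\}\cup\{{\bf 0}\}, {\bf 1}\right)$, where $\boldsymbol{\nu}$ is its vector of letter measures. It therefore suffices to produce, for a.e. $\boldsymbol{\mu}$, two strictly ergodic such subshifts, of the two prescribed spectral types, whose letter measures both equal $\boldsymbol{\mu}$ (or, via Corollary~\ref{cor:oe}, agree up to a unimodular matrix fixing ${\bf 1}$).

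For the weakly mixing system, I would consider, for each $\boldsymbol{\mu}=(\mu_1,\mu_2,\mu_3)\in{\mathbb R}^3_+$, the three-interval exchange $T_{\boldsymbol{\mu}}$ with the symmetric permutation $(13)(2)$ and interval lengths $\mu_1,\mu_2,\mu_3$. For Lebesgue-a.e. $\boldsymbol{\mu}$ this transformation satisfies the i.d.o.c. condition, hence is minimal by~\cite{Keane:75}, and it is topologically weakly mixing by~\cite{KatokStepin,FerHol:2004}. Its natural coding $(Y_{\boldsymbol{\mu}},S)$ is a minimal dendric subshift, hence a primitive unimodular proper $\S$-adic subshift by Proposition~\ref{prop:DendricareSadic}, and it is uniquely ergodic by Theorem~\ref{theo:ergodic}. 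Its letter measures are exactly the interval lengths $\boldsymbol{\mu}$, so by Theorem~\ref{theo:dg} its dimension group is precisely the one displayed in the statement.

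For the pure discrete spectrum system, I would use Brun subshifts. For Lebesgue-a.e. $\boldsymbol{\mu}$, the Brun continued fraction expansion of $\boldsymbol{\mu}$ is primitive and produces a Brun subshift $(X_{\boldsymbol{\mu}},S)$ whose vector of letter measures is $\boldsymbol{\mu}$; it is strictly ergodic by Proposition~\ref{prop:UE} together with minimality, and it has pure discrete spectrum for a.e. $\boldsymbol{\mu}$ by~\cite{BST:2019}. By Theorem~\ref{theo:dg} its dimension group is again the one in the statement. Should the letter measures of the Brun subshift only agree with $\boldsymbol{\mu}$ up to the unimodular change of coordinates recorded by the Brun incidence matrices, Corollary~\ref{cor:oe} still guarantees that $(X_{\boldsymbol{\mu}},S)$ and $(Y_{\boldsymbol{\mu}},S)$ are strong orbit equivalent, hence share the same dimension group.

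Finally, intersecting the two full-measure sets of parameters on which these constructions succeed yields a full-measure set of $\boldsymbol{\mu}$ for which both $(X_{\boldsymbol{\mu}},S)$ and $(Y_{\boldsymbol{\mu}},S)$ exist with the prescribed dimension group, the former having pure discrete spectrum and the latter being weakly mixing. The main obstacle is the coordination of the two genericity statements together with the matching of letter measures: one must verify that the Brun subshift attached to $\boldsymbol{\mu}$ indeed carries letter frequency vector $\boldsymbol{\mu}$ (or one equivalent to it under a unimodular matrix fixing ${\bf 1}$), so that its dimension group coincides with that of the interval exchange $T_{\boldsymbol{\mu}}$, and that weak mixing and pure discrete spectrum hold simultaneously on sets of full Lebesgue measure in the same parameter $\boldsymbol{\mu}$.
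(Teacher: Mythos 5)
Your proposal is correct and follows essentially the same route as the paper: the paper's proof likewise invokes Brun subshifts (generically of pure discrete spectrum by \cite{BST:2019}) and natural codings of three-letter interval exchanges (generically weakly mixing by \cite{KatokStepin,FerHol:2004}), with the dimension-group identification via Theorem~\ref{theo:dg} left implicit. You have merely spelled out the details the paper omits, namely unique ergodicity via Theorem~\ref{theo:ergodic} and Proposition~\ref{prop:UE}, the matching of letter measures to $\boldsymbol{\mu}$, and the intersection of the two full-measure parameter sets.
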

  \begin{proof}
Brun   subshifts  such as  introduced in Section \ref{subsec:Brun} are proved   to have   generically   pure discrete spectrum
in 
\cite{BST:2019}. 
See \cite{KatokStepin,FerHol:2004}  for  the  genericity of   weak mixing for subshifts generated by   three-letter  interval exchanges. 
\end{proof}
%%%%%%%%%%%%%%%%%%%%%%%%%%%%%%%%%%%%
\subsection{Dendric  vs.   primitive unimodular proper $\mathcal{S}$-adic subshifts }\label{subsection:vs}
In this   section,  we give an example of a primitive unimodular proper $\mathcal{S}$-adic subshift  whose strong orbit equivalence class   contains no minimal  dendric subshift.
Theorem~\ref{theo:dg} provides a description of the dimension group of any primitive unimodular proper $\mathcal{S}$-adic subshift.
It is  natural to ask whether a strong orbit equivalence class represented by such a dimension group includes a primitive unimodular proper $\mathcal{S}$-adic subshift. 
This was conjectured in different terms in \cite{Effros&Shen:1979}.
It was shown to be true when the dimension group has a unique trace~\cite{Riedel:1981b} (or, equivalently,  when all minimal systems in this class are uniquely ergodic)  but shown to be false in general~\cite{Riedel:1981}. In the same spirit, one may ask if the strong orbit equivalence class of any primitive unimodular proper $\mathcal{S}$-adic subshift contains a dendric subshift.
Inspired from \cite{Effros&Shen:1979}, we negatively answer to that question below.

Indeed, this  example provides  a family of examples of  primitive unimodular $\S$-adic subshifts on a three-letter alphabet  with two ergodic invariant probability  measures.
They thus    cannot be dendric by  Theorem~\ref{theo:ergodic} and   their strong orbit equivalence class contains no minimal dendric subshift.

Let ${\mathcal A}=\{1,2,3\}$ and consider the directive sequence $\boldsymbol{\tau } = (\tau_n : {\mathcal A}^* \to {\mathcal A}^*)_{n \geq 1}$ defined by 
\begin{align*}
\tau_{2n}:
&
\
1 \mapsto 2^{a_n}3, \quad 
2 \mapsto 1, \quad
3 \mapsto 2 \\
\tau_{2n+1}:
& 
\ 
1 \mapsto 32^{a_n}, \quad 
2 \mapsto 1, \quad
3 \mapsto 2 
\end{align*}
where $(a_n)_{n \geq 1}$ is an increasing sequence of positive integers satisfying $\sum_{n \geq 1} 1/a_n  <1$.
The incidence matrix of each morphism $\tau_n$ is the unimodular matrix
 $$
A_n =
\begin{pmatrix}
0 & 1 & 0 \\
a_n & 0 & 1 \\
1 & 0  & 0 \\
\end{pmatrix}.
$$
It is easily checked that any morphism $\tau_{[n,n+5)}$ is proper and has an incidence matrix with positive entries.
Therefore, the $\S$-adic subshift $(X_{\boldsymbol{\tau}} , S)$ is primitive, unimodular and proper.
Let us show that $(X_{\boldsymbol{\tau}} , S)$ has two ergodic measures.    In fact, we  prove that $(X_{\boldsymbol{\tau}} , S)$  has at least two ergodic measures.
This will  imply  that  it has  exactly two ergodic measures by Theorem \ref{prop:effrosshen81}. 

For all $n \geq 1$, let $C_n$ be the first column vector of $A_{[1,n]} = A_1 \cdots A_n$ and $J_n = C_n/\Vert C_n\Vert_1$ where $\Vert\cdot\Vert_1$ stands for the L1-norm.
If $(X_{\boldsymbol{\tau}} , S)$ had only one ergodic measure $\mu$, then $(J_n)_{n \geq 1}$ would converge to $\boldsymbol{\mu}$. 
Hence it suffices to show that $(J_n)_{n \geq 1}$ does not converge.

Observe that, using the shape of the matrices $A_n$, that for $n\geq 1$, and setting $C_0 = e_1, C_{-1} = e_2, C_{-2} = e_3$,  one has
$$
C_n = a_n C_{n-2} + C_{n-3}, 
$$ 
where $e_1,e_2,e_3$ are the canonical vectors.
Hence we have $J_n = b_n J_{n-2} + c_nJ_{n-3}$ with 
$b_n = a_n \frac{\Vert C_{n-2}\Vert_1}{\Vert C_{n}\Vert_1}$ and 
$c_n = \frac{\Vert C_{n-3}\Vert_1}{\Vert C_{n}\Vert_1}$.
In particular, $b_n + c_n = 1$.
As $(\Vert C_n\Vert_1)_n$ is non-decreasing, we have 
$1 \geq b_n \geq a_n c_n$ 
and thus $c_n \leq a_n^{-1}$.
Moreover, we have
\[
\Vert J_n - J_{n-2} \Vert_1 = \Vert(b_n - 1) J_{n-2} -c_n J_{n-3} \Vert_1 = c_n \Vert J_{n-2} -  J_{n-3} \Vert_1 \leq \frac{2}{a_n},
\]
hence, for $0\leq m\leq n$,
$$
\Vert J_{2n}-J_{2m}\Vert_1 \leq 2 \sum_{k=m+1}^{n} \frac{1}{a_{2k}}.
$$
This shows that $(J_{2n})_{n \geq 1}$ is a Cauchy sequence. Let  $\beta$  stand for  its limit.
For $n \geq m=0$, we obtain 
$\Vert J_{2n}-e_1\Vert_1 \leq 2 \sum_{k=1}^{n} \frac{1}{a_{2k}}$ and thus 
$\Vert\beta-e_1\Vert_1 \leq 2 \sum_{k=1}^{\infty } \frac{1}{a_{2k}}$.

We similarly show that $(J_{2n+1})_{n \geq 1}$ is a Cauchy sequence. Let $\alpha $  stand for  its limit. We have $\Vert\alpha-e_2\Vert_1 \leq 2 \sum_{k=0}^{\infty } \frac{1}{a_{2k+1}}$.
Consequently, 
$$
\Vert\alpha - \beta \Vert_1 = 
\Vert(\alpha -e_2) + (e_1- \beta ) +e_2 -e_1 \Vert_1 \geq 2 - 2 \sum_{k=1}^{\infty } \frac{1}{a_{k}} 
$$
and $(J_n)_{n \geq 1}$ does not converge. 
Consequently, $(X_{\boldsymbol{\tau}} , S)$ has exactly two ergodic measures.

%%%%%%%%%%%%%%%%%%%%%%%%%%%%%%%% 
\section{Questions and further works}
%%%%%%%%%%%%%%%%%%%%%%%%%%%%%%%%

%\input{section6.tex}
 According to \cite{Riedel:1981} (see also Section \ref{subsection:vs}), not all strong orbit equivalence classes represented by dimension groups of the type \eqref{align:ZdGD} in Theorem~\ref{theo:dg} contain primitive unimodular proper $\mathcal{S}$-adic subshifts. 
The description of the dynamical dimension group in Theorem~\ref{theo:dg}  is not precise enough to explain the restrictions that occur for instance for the measures,
so that a complete characterization of the  dynamical dimension groups of primitive unimodular proper $\mathcal{S}$-adic subshifts is still missing.
 
Similarly, we address the question of characterizing the strong orbit equivalence classes containing minimal dendric subshifts. 
The combinatorial properties of these subshifts imply constraints, especially for the  invariant measures, such as stated in Theorem \ref{theo:ergodic}.  
For example, the question  arises as to whether   dimension groups of rank $d$ having at most $d/2$ extremal traces  are dimension groups of minimal dendric subshifts.

Another question is about the properness  assumption.
For dendric or Brun subshifts, we were able to find a primitive unimodular proper $\mathcal{S}$-adic representation. 
One can easily define $\mathcal{S}$-adic subshifts by a primitive unimodular directive sequence that is not proper. 
The question  now    is whether 
 a primitive unimodular proper $\mathcal{S}$-adic representation  (up to conjugacy) of this subshift can be found.
Even in the substitutive case, we do not know whether such a representation exists.

The factor complexity of dendric subshifts is  affine. 
It is well known~\cite{Boyle&Handelman:94,Ormes:97,Sugisaki:2003} that inside the strong orbit equivalence class of any minimal Cantor system one can find another minimal Cantor systems with any other prescribed topological entropy. 
Primitive unimodular proper $\mathcal{S}$-adic subshifts being of finite topological rank,  they have zero topological entropy. 
It would be interesting to  exhibit  a variety of asymptotic behaviours   for   complexity functions within a  strong orbit equivalence class.

\bibliographystyle{alpha}
\bibliography{DimG}

\end{document}